\newtheorem{theorem}{Theorem}[section]
\newtheorem{lemma}[theorem]{Lemma}
\newtheorem{fact}[theorem]{Fact}
\newtheorem{proposition}[theorem]{Proposition}
\newtheorem{corollary}[theorem]{Corollary}
\theoremstyle{definition}
\newtheorem{definition}[theorem]{Definition}
\newenvironment{remark}
  {\pushQED{\qed}\remarkx}
  {\popQED\endremarkx}
\newenvironment{claim}
  {\pushQED{\qed}\claimx}
  {\popQED\endclaimx}
\def\NN{\mathbb{N}}
\def\sgn{\mathrm{sgn}}
\def\RR{\mathbb{R}}
\def\TT{\mathbb{T}}
\def\ZZ{\mathbb{Z}}
\def\d{\,\mathrm{d}}
\def\e{\mathbbm{1}}
\def\id{\mathrm{id}_G}
\def\exp{\mathrm{exp}}
\def\mut{\widetilde{\mu}}
\def\dis{\mathfrak{d}}
\def\tri{\,\triangle\,}
\def\Stab{\mathrm{Stab}}
\title[Minimal and nearly minimal measure expansions]{Minimal and nearly minimal measure expansions in connected unimodular groups}
\author{Yifan Jing}
\address{Department of Mathematics, University of Illinois  Urbana-Champaign, Urbana IL, USA}
\email{yifanjing17@gmail.com}
\author{Chieu-Minh Tran}
\address{Department of Mathematics, University of Notre Dame, Notre Dame IN, USA}
\email{mtran6@nd.edu}
\thanks{YJ was supported by Arnold O. Beckman Research Award (Campus Research Board RB21011), by the Department Fellowship, and by the Trjitzinsky Fellowship from UIUC}
\subjclass[2010]{Primary 22D05; Secondary 11B30, 05D10, 03C20, 43A05}
\date{}
\begin{document}

\begin{abstract}
Let $G$ be a connected unimodular group equipped with a (left and hence right) Haar measure $\mu_G$, and suppose $A, B \subseteq G$ are nonempty and compact. An inequality by Kemperman gives us  $\mu_G(AB)\geq\min\{\mu_G(A)+\mu_G(B),\mu_G(G)\}.$

Our first result determines the conditions for the equality to hold, providing a complete answer to a question asked by Kemperman in 1964.  
Our second result characterizes compact and connected $G$, $A$, and $B$ that nearly realize equality, with quantitative bounds having the sharp exponent.
This can be seen up-to-constant as a $(3k-4)$-theorem for this setting, and confirms the connected case of conjectures by Griesmer and by Tao. As an application, we get a measure expansion gap result for connected compact simple Lie groups.

The tools developed in our proof include an analysis of the shape of minimally and nearly minimally expanding pairs of sets,  a bridge from this to the  properties of a certain pseudometric, and a construction of  appropriate continuous group homomorphisms to either $\RR$ or $\TT = \RR/ \ZZ$ from the pseudometric.

\end{abstract}
\maketitle
\tableofcontents

\section{Introduction}

\subsection{Background}

The Cauchy--Davenport theorem asserts that if $X$ and $Y$ are nonempty subsets of the group $\ZZ/p\ZZ$ of prime order $p$, then 
$$|X+Y| \geq \min\{|X|+|Y|-1, p\},$$
where we set $X+Y :=\{ x+y : x\in X, y \in Y\}$. The condition for $X$ and $Y$ to have minimal expansion (i.e.  equality happens in the above inequality) is essentially given by Vosper's theorem~\cite{Vosper}, which states that if 
$$1< |X|, |Y|, \text{ and } |X+Y|= |X|+|Y|-1< p-1$$
then $X$ and $Y$ must be arithmetic progressions with the same common  difference. When $X$ and $Y$ have nearly minimal expansion
(i.e. the equality nearly happens), one might expect that $X$ and $Y$ are instead contained in arithmetic progressions with slightly larger cardinalities.  This was confirmed by Freiman~\cite{Freiman} for small $X=Y$ even though the optimal statement, known as the $(3k-4)$-conjecture for $\ZZ/p\ZZ$, remains wide open. 
Similar results were also obtained for other abelian groups; see e.g.~\cite{Kneser, Kem60, DF03, GreenRuzsa06, T18, G19, Lev20}. 

This paper considers Kemperman's analog of the Cauchy--Davenport theorem to connected unimodular locally compact nonabelian groups. We will determine the conditions for equality to happen  and the condition for equality nearly happen when the group is compact. Our work is inspired by recent progresses in the study of small expansions in the nonabelian settings; see~\cite{BGT}, in particular, for the classification of approximate groups by Breuillard, Green, and Tao; see also~\cite{Helfgott08, BGS10, BV12, Hrushovski, PS16, BM18, Hru20}.

Throughout, let $G$ be a connected locally compact group, and $\mu_G$ a left Haar measure on $G$. We further assume that $G$ is unimodular (i.e., the measure $\mu_G$ is invariant under right translation), so $\mu_G$ behaves like an appropriate notion of size. This assumption holds in  many  situations of interest (e.g, when $G$ is compact, discrete, a nilpotent Lie group, a semisimple Lie group, etc). 
As usual, for $A, B\subseteq G$, we set $AB:=\{ ab: a\in A, b\in B \}$ and let $A^n$ be the $n$-fold product of $A$ for $n \in \NN^{>0}$.   In~\cite{Kemperman},  Kemperman proved that if $A, B \subseteq G$ are nonempty and compact,  then 
$$
\mu_G(AB) \geq \min\{ \mu_G(A)+\mu_G(B), \mu_G(G)\}.
$$
This generalizes earlier results for one-dimensional tori,  $n$-dimensional tori, abelian groups, and second countable compact groups by  Raikov~\cite{Onedimtorus}, Macbeath~\cite{MacbeathTorus}, Kneser~\cite{Kneser}, and Shields~\cite{shields}.

 The problem of determining when equality happens for this inequality was proposed in the same paper~\cite{Kemperman}. After handling a number of easy cases, the problem can be reduced to 
classifying all connected and unimodular group $G$ and  pairs $(A,B)$ of compact subsets of $G$ such that
$$0< \mu_G(A), \mu_G(B),  \text{ and } \mu_G(AB)= \mu_G(A)+\mu_G(B) < \mu_G(G).$$
We call such $(A,B)$ a {\bf minimally expanding} pair on $G$.

It is easy to see that, if $I$ and $J$ are closed intervals in $\TT= \RR/\ZZ$, such that $I$ and $J$ have positive measures and the total of their measures is strictly smaller than $\mu_{\TT}(\TT)$, then $I+J$ is an interval with length the total lengths of $I$ and $J$. Hence, such $(I, J)$ is a minimally expanding pair on $\TT$. 
More generally, when $G$ is a compact group, $\chi: G \to \TT$ is a continuous surjective group homomorphism, $I$ and $J$ are as before,  
$$
A =\chi^{-1}(I) \quad\text{and}\quad  B= \chi^{-1}(J),
$$ 
we can check by using the Fubini theorem that $(A, B)$ is a minimally expanding pair. Note that an arithmetic progression on $\ZZ/ p\ZZ$ is the inverse image under a group homomorphism $\phi: \ZZ/p\ZZ \to \TT$ of an interval on $\TT$, so this example is the counterpart of Vosper's classification. 
Another obvious example is when $G$ is a noncompact group, $\chi: G \to \RR$ is a continuous surjective group homomorphism with compact kernel, $I$ and $J$ are nonempty compact intervals in $\RR$, $A =\chi^{-1}(I)$, and $B= \chi^{-1}(J)$. One might optimistically conjecture, in analogy with Vosper's theorem, that there are no other $G$, $A$, and $B$ such that $(A,B)$ is a minimally expanding pair on $G$.

In view of the earlier discussions, for compact $A, B \subseteq G$, we say that $(A,B)$ is a {\bf $\delta$-nearly minimally expanding pair} on $G$ if 
\[
0< \mu_G(A), \mu_G(B), \text{ and } \mu_G(AB)< \mu_G(A)+\mu_G(B)+ \delta \min\{\mu_G(A), \mu_G(B)\}  < \mu_G(G).
\]
We interpret the problem of determining when equality nearly happens in the Kemperman inequality as classifying all connected and unimodular groups $G$ and $\delta$-nearly minimally expanding pairs $(A,B)$ on $G$.

In analogy with the discussion for the Cauchy--Davenport theorem, we hope for an answer along the following line: 
If $G$ is compact, and $(A,B)$ is a $\delta$-nearly minimally expanding pair on $G$ with small $\delta$, then there is a continuous and surjective group homomorphism $\chi: G \to \TT$, compact interval $I, J \subseteq T$, and small $\varepsilon$, such that 
\begin{equation}\label{eq: main conj}
A \subseteq \chi^{-1}(I), B \subseteq \chi^{-1}(J),  \mu_G( \chi^{-1}(I)\setminus A ) < \varepsilon \mu_G(A),  \mu_G( \chi^{-1}(I)\setminus B ) < \varepsilon \mu_G(B).
\end{equation}
  The optimistic conjecture for noncompact groups is similar, but with $\TT$ replaced by $\RR$ and an extra condition that $\chi$ has compact kernel.

Under the extra assumption that $G$ is abelian, the optimistic conjectures for both classification problems were more or less confirmed before our work. In the same paper~\cite{Kneser} mentioned earlier, Kneser solved the classification problem for equality with the answer we hope for. 

For  the  near equality  problem, when $G=\TT^d$, the desired classification was obtained by Bilu~\cite{Bilu}, and later improved by Candela and De Roton~\cite{circle19} for a special case when $d= 1$. When $G$ is a general abelian group, a classification result was obtained by Tao~\cite{T18} for compact $G$, and by Griesmer~\cite{G19} when $G$ is noncompact. Griesmer also proved more general results for disconnected groups~\cite{G14, G19}. The results by Griesmer~\cite{G14, G19} and by Tao~\cite{T18} used nonstandard analysis methods, and did not provide how $\varepsilon$ quantitative depends on $\delta$ in \eqref{eq: main conj}. A sharp exponent classification result (i.e., $\varepsilon=O(\delta)$) for compact abelian groups was obtained very recently by Christ and Iliopoulou~\cite{ChristIliopoulou}. Results with sharp exponent bounds are likely the best that one can achieve without solving the $(3k-4)$-conjecture for $\ZZ/p\ZZ$.

For nonabelian $G$, not much was known earlier than this paper.  In closest proximity to the current work, Bj\"orklund considered in~\cite{abelcomponent} a variation of Kemperman's inequality and the equality classification problem without  assuming that $G$ is connected while assuming additionally that $G$ is compact, second countable, and has abelian identity component, and the sets $A$ and $B$ are ``spread out'' (i.e., far away from being subgroups). The only common case to this and our current setting happens when $G$ is abelian and connected.

Toward showing that appropriate versions of the optimistic conjectures also hold for the nonabelian classification problem, there is an important new challenge: While the desired conclusions for the abelian setting are mainly about the structure of $(A,B)$, the structure of $G$ is also highly involved for the nonabelian setting. If $G = \text{SO}_3(\RR)$, for example, one would not be able to find a minimally expanding pair according to the optimistic answers because there is no continuous surjective group homomorphism from $\text{SO}_3(\RR)$ to $\TT$. On the other hand, one can always find a continuous and surjective group homomorphism from a compact connected nontrivial abelian group to $\TT$ and use this to construct  minimally expanding pairs.

The above challenge connects our problem to the subject of small expansions in nonabelian groups, a fascinating topic that brings together ideas from different areas of mathematics. The phenomenon that expansion rate encodes structural information about the group can already be seen through the following famous theorem by Gromov~\cite{Gromov} in geometric group theory: If $G$ is a group generated by a finite set $X = X^{-1}$, and  $|X^n|$ grows polynomially as a function of $n$, then $G$ must be virtually nilpotent. A more recent result by Breuillard indicates that some of the analysis go through for locally compact groups~\cite{Breuillardballs}. Even more suggestive is the classifications of approximate groups  in~\cite{BGT} mentioned earlier (see the definition in Section~\ref{sec: 6.2}). 
In our proof, we will use the continuous version of the result in~\cite{BGT}; this was proven in  
the thesis of Carolino~\cite{thesis} and can also be deduced from the result in~\cite{BGT} using a result of Massicot--Wagner~\cite{MW}.  The ideas in the proof of these results can be traced back to the solution of Hilbert's Fifth problem by Montgomery--Zippin~\cite{MZ56}, Gleason~\cite{Gleason}, and Yamabe~\cite{Yamabe}, which we will also use later on. Finally, let us mention that these stories are also closely tied to the study of definable groups in model theory. This is the natural habitat of the aforementioned result by Massicot--Wagner~\cite{MW}, and also of Hrushovski's Lie model theorem~\cite{Hrushovski}, a main ingredient for the proof of the main theorem in~\cite{BGT}.

Before getting to the results, we briefly survey a number of works for nonabelian groups which are thematically relevant but use different techniques. When $A, B \subseteq G$ are finite and nonempty,  DeVos~\cite{Devos} classified  all situations where $|AB|<|A|+|B|$.  In~\cite{Bjorklundandfish}, Bj\"orklund and Fish studied an expansion problem with respect to upper Banach density in amenable nonabelian groups and obtained conclusions with similar flavor. Under the assumption that $A$ is a finite subset of a group $G$ such that the relation $xy \in A$ has finite VC-dimension (or NIP), Terry, Conant, and Pillay~\cite{CPT18, boundedVCdim} shows that $A$ must have a structure which is surprisingly similar to the optimistic conjecture mentioned earlier.

It would also be interesting to study a different minimal and nearly minimal measure expansion problem where we fix a connected unimodular group $G$ instead of letting $G$ range over all connected unimodular group $G$ as we are doing here. When $G$ is $\RR^n$, Kemperman inequality is a consequence of the Brunn--Minkowski inequality
$$\mu_G(AB)^{1/n} \geq \mu_G(A)^{1/n}+ \mu_G(B)^{1/n}. $$
This inequality  also holds for nilpotent $G$~\cite{Brunn1, Gromov2, Brunn3}. The equality holds in the Brunn--Minkowski inequality for $\RR^n$ if and only if $A$ and $B$ are homothetic convex subsets of $\RR^n$. This was a result by Brunn and Minkowski when $A$ and $B$ are further assumed to be convex, and a result by Lyusternik~\cite{Lyusternik}, Henstock and Macbeath~\cite{HenstockMacbeath} in the general case.
 A qualitative answer for the near equality Brunn--Minkowski problem for $\RR^n$ is obtained by Christ \cite{qualitativebrunnminkowski}, and a quantitative version is obtained by Figalli and Jerison \cite{QuantitativeBrunnMinkowski}. We do not pursue this direction further here.

\subsection{Statement of main results}
  Our first main result determines the conditions for equality to happen in the Kemperman inequality answering a question by Kemperman in~\cite{Kemperman}. Scenario (v) and (vi) in the theorem is a classification of the groups $G$ and minimally expanding pairs $(A,B)$ on $G$. 

\begin{theorem}\label{thm:mainequal}
Let $G$ be a connected unimodular group, and let $A,B$ be nonempty compact subsets of $G$.  If
\[
\mu_G(AB)=\min\{\mu_G(A)+\mu_G(B), \mu_G(G)\}. 
\]
 then we have the following:
\begin{enumerate}[\rm (i)]
    \item $\mu_G(A) +\mu_G(B)=0$ implies $\mu_G(AB) =0$; 
    \item $\mu_G(A) +\mu_G(B)\geq \mu_G(G)$ implies   $AB=G$;
    \item   $\mu_G(A)=0$ and $0<\mu_G(B)< \mu_G(G)$ imply there is compact  $H \leq G$ and compact $B_1 \subseteq B$ such that with $B_2= B\setminus B_1$, we have $HB_1=B_1$,  $\mu_G(AB_2)=0$, and $A \subseteq gH$ for some $g \in G$;
    \item  $\mu_G(B)=0$ and $0<\mu_G(A)< \mu_G(G)$ imply there is compact  $H \leq G$ and compact $A_1 \subseteq A$ such that, with $A_2=A \setminus A_1$, we have $A_1H=A_1$, $\mu_G(A_2B)=0$, and $B \subseteq Hg$ for some $g \in G$; 
    \item $0<\min\{\mu_G(A),\mu_G(B),\mu_G(G)-\mu_G(A)-\mu_G(B)\}$, and $G$ is compact together imply that there is a surjective continuous group homomorphism $\chi: G \to \TT$ and compact intervals $I$ and $J$ in $\TT$ with $I+J\neq \TT$ such that $A = \chi^{-1}(I)$ and $B = \chi^{-1}(J)$;
    \item   $0<\min\{\mu_G(A),\mu_G(B)\}$, and $G$ is not compact together imply that there is a surjective  continuous group homomorphism $\chi: G \to \RR$ with compact kernel and compact intervals $I$ and $J$ in $\RR$ such that $A = \chi^{-1}(I)$ and $B = \chi^{-1}(J)$.
\end{enumerate}
Moreover, $\mu_G(AB)=\min\{\mu_G(A)+\mu_G(B), \mu_G(G)\}$ holds if and only if we are in exactly one of the implied scenarios in (i-vi).
\end{theorem}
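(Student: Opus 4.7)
My plan is to dispense with the degenerate cases (i)--(iv) by direct measure-theoretic arguments, and then reduce the substantive cases (v) and (vi) to the construction of a continuous surjective homomorphism $\chi \colon G \to \TT$ or $\chi \colon G \to \RR$ (with compact kernel in the noncompact case), after which the identification $A = \chi^{-1}(I)$, $B = \chi^{-1}(J)$ for compact intervals $I, J$ follows from a stabilizer analysis plus Fubini. Case (i) is immediate since a compact null set times a compact null set in a Haar group is null. For (ii), iterating Kemperman on $A^n B$ together with connectedness of $G$ forces $AB = G$. For (iii), the equality $\mu_G(AB) = \mu_G(B)$ pushes every $x \in A A^{-1}$ into the stabilizer $H = \{g \in G : gB = B\}$, which is closed, and proper since $\mu_G(B) < \mu_G(G)$; then $A$ lies in a single left coset of $H$ and $B = HB$. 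Case (iv) is symmetric.

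The heart of the argument is (v) and (vi), and follows the trio announced in the abstract: \emph{shape analysis} of $(A, B)$, passage to a \emph{pseudometric} on $G$, and \emph{construction of a homomorphism} to $\TT$ or $\RR$ from the pseudometric. First I would use Kemperman equality iterated on products $A^m B^n$, together with a Kneser-style stabilizer argument, to produce a common closed subgroup $N \le G$ with $A = AN = NA$ and $B = BN = NB$, rigidifying the shape of the pair; passage to $G/N$ preserves the equality while killing redundant symmetry. I would then invoke the continuous approximate-group structure theorem of Carolino~\cite{thesis} (a continuous form of Breuillard--Green--Tao~\cite{BGT} accessible via Massicot--Wagner~\cite{MW}), combined with the Gleason--Yamabe and Montgomery--Zippin tools from the solution of Hilbert's fifth problem, to extract a further quotient of $G/N$ that is a connected Lie group of small dimension; a dimension count driven by the Kemperman equality condition should then force this Lie quotient to be one-dimensional and abelian, hence $\TT$ in case (v) and $\RR$ in case (vi).

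To make this quotient concrete I would build a left-translation-invariant pseudometric $d$ on $G$, for example by setting $d(g, h) := \inf\{\mu_G(K) - \mu_G(A) : K \text{ compact}, \ A \cup gA \cup hA \subseteq K\}$ or a suitable symmetrization, so that $d(g, h)$ measures the Haar cost of simultaneously enlarging $A$ to contain both translates. In the Kemperman equality regime I expect the image of $d$ on $G$ to be isometric, up to a constant, to distance on an interval, which yields a continuous additive map $\chi$ with fibers equal to the $N$-cosets; compactness of $\ker \chi$ in case (vi) follows from compactness of $A$ since every fiber meeting $A$ does so in a coset of $\ker \chi$. The images $\chi(A), \chi(B)$ must then be intervals, for otherwise Fubini along $\chi$ together with Cauchy--Davenport/Kemperman on the one-dimensional target would force $\mu_G(AB) > \mu_G(A) + \mu_G(B)$, contradicting equality; together with $A = A\ker\chi$ this yields $A = \chi^{-1}(I)$ and $B = \chi^{-1}(J)$. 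The main obstacle I anticipate is precisely this pseudometric-to-homomorphism bridge: in the nonabelian setting the Kneser iteration does not by itself manufacture an $\RR$- or $\TT$-valued target, and extracting the one-parameter structure seems to require the full force of the approximate-group and Hilbert-fifth-problem machinery. The converse (``moreover'' part) is then routine: in (v) and (vi) Fubini along $\chi$ converts the problem to equality of translated intervals in $\TT$ or $\RR$, while (i)--(iv) give equality immediately.
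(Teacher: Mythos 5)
Your treatment of the degenerate cases is broadly in line with the paper's Section~4 (for (ii) the paper argues more directly that $A^{-1}g$ and $B$ must meet for every $g$ because their measures sum to $\mu_G(G)$ and $G$ is connected; for (iii) one must work with the measure-theoretic stabilizer $\{g:\mu_G(B\triangle gB)=0\}$ and then upgrade $\mu_G(HB\setminus B)=0$ to $HB=B$, which takes a genuine density argument you elide). But for the substantive cases (v) and (vi) your proposal reproduces the paper's announced three-step strategy without supplying the arguments that make it work, and the details you do supply contain errors. First, your candidate pseudometric $d(g,h)=\inf\{\mu_G(K)-\mu_G(A): A\cup gA\cup hA\subseteq K\}$ is not a pseudometric: $d(g,g)=\mu_G(A\cup gA)-\mu_G(A)>0$ for generic $g$, so reflexivity fails. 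The paper's choice is $d_A(g_1,g_2)=\mu_G(A)-\mu_G(g_1A\cap g_2A)$, whose triangle inequality is verified in Proposition~\ref{prop: construct pseudo-metric}. Second, the step "Kemperman equality iterated on $A^mB^n$ together with a Kneser-style stabilizer argument produces a closed $N$ with $A=AN=NA$" is precisely the step that has no known nonabelian analogue: Kneser's argument rests on the $e$-transform, which the paper explicitly notes does not generalize. The paper's substitute is an extended fiber analysis: reduce to a bounded-dimension Lie quotient (Theorems~\ref{thm: find approximate group}, \ref{thm: thesis}, Proposition~\ref{prop:gleason2}), locate a one-dimensional torus $T$ along which all fibers of $A$ and $B$ are short (Theorem~\ref{thm:toricexpander}), prove fiberwise rigidity and equality of fiber lengths (Lemma~\ref{lem: equal fiberwise same length}), and only then deduce local linearity of $d_A$ (Proposition~\ref{prop: equal pseudometric linear}). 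None of this is present in your outline, and the "dimension count forcing the Lie quotient to be one-dimensional" you invoke instead does not exist: the Carolino/Gleason--Yamabe reduction only bounds the dimension, and already for $G=\TT^2$ one must still extract the correct one-dimensional quotient from the shape of $(A,B)$.

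The remaining gap is the bridge you yourself flag: passing from a locally linear invariant pseudometric to a continuous surjective homomorphism onto $\TT$ or $\RR$. This is not routine. The paper's Proposition~\ref{prop: strong linear} must show that $\ker d$ is normal (using monotonicity or, for Lie groups, the squaring-map Lemma~\ref{lem: square map}), that $G/\ker d$ is a Lie group (via Gleason--Yamabe and a no-small-subgroups argument), and that it has dimension one (via one-parameter subgroups); in the compact case one must additionally rule out that the quotient is a higher-dimensional torus or a nonabelian simple group. Asserting that "the image of $d$ is isometric to an interval" is a restatement of the conclusion, not an argument. So while your proposal correctly identifies the architecture of the proof, the load-bearing components --- the toric non-expander bound, the fiberwise rigidity, the local linearity of the correctly defined $d_A$, and the pseudometric-to-homomorphism theorem --- are missing, and the proposal as written does not constitute a proof.
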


  Next we obtain a classification of nearly minimally expanding pairs. This answers questions by Griesmer~\cite{G19} and confirms a conjecture by Tao~\cite[Conjecture 5.1]{T18}, under the extra assumption of connectedness.

\begin{theorem}\label{thm:mainapproximate}
 Let $G$ be a connected compact group, and let $A,B$ be compact subsets of $G$ with $$0< \lambda=\min\{\mu_G(A),\mu_G(B),1-\mu_G(A)-\mu_G(B)\}.$$
  There is a constant $K=K(\lambda )$, not depending on $G$, such that for any $0\leq \varepsilon<1$, whenever we have $\delta\leq K\varepsilon$ and
  \[
  \mu_G(AB)<\mu_G(A)+\mu_G(B)+\delta\min\{\mu_G(A),\mu_G(B)\},
  \]
 there is a surjective continuous group homomorphism $\chi: G \to \TT$ together with two compact intervals $I,J\in \mathbb T$ with
 \[
 \mu_\TT(I)<(1+\varepsilon)\mu_G(A),\quad \mu_\TT(J)<(1+\varepsilon)\mu_G(B),
 \]
 and $A\subseteq \chi^{-1}(I)$, $B\subseteq\chi^{-1}(J)$. 
\end{theorem}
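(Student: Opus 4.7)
The plan is to upgrade the equality classification of Theorem~\ref{thm:mainequal} to a quantitative stability statement by means of the pseudometric construction mentioned in the abstract. First I would normalize by assuming $\mu_G(A)\le\mu_G(B)$, writing $\alpha=\mu_G(A)$ and $\beta=\mu_G(B)$, and passing to near-stabilizer sets of the form $S_\eta=\{g\in G:\mu_G(gA\triangle A)<\eta\alpha\}$. Iterating Kemperman's inequality bounds the measures of $A^{-1}A$ and similar small-doubling sets linearly in $\delta\alpha$, and the continuous version of the Breuillard--Green--Tao approximate group theorem proven in Carolino's thesis, combined with the Massicot--Wagner stabilizer construction, forces $S_\eta$ to be close to an open subgroup of $G$. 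Since $G$ is connected, a bounded power of $S_\eta$ already covers $G$, which provides the quantitative control over how left translation acts on $A$ and $B$ that is needed in the sequel.

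Next I would define a pseudometric $d$ on $G$ whose level sets encode the one-dimensional nature of nearly-minimal expansion. A natural candidate is $d(x,y)=\mu_G\bigl((xA)\triangle(yA)\bigr)/\alpha$, possibly symmetrized by combining it with the analogous quantity built from $B$. The key analytic input is submodularity of measure: if the sublevel set $U_t=\{g\in G:d(g,\id)<t\}$ grew in more than one independent ``direction'', then translates of $A$ would fill $AB$ well beyond the measure $\alpha+\beta+\delta\alpha$ allowed by hypothesis. This forces $\mu_G(U_t)$ to grow essentially linearly in $t$ along a single direction, together with transverse invariance, giving $d$ the structure of a quotient pseudometric pulled back from a one-dimensional group; this is the pseudometric incarnation of the conclusion of Theorem~\ref{thm:mainequal}(v).

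The final step is to convert $d$ into a continuous surjective homomorphism $\chi\colon G\to\TT$. Invoking the Gleason--Yamabe--Montgomery--Zippin solution of Hilbert's Fifth Problem, $G$ has arbitrarily small compact normal subgroups $N$ with $G/N$ a connected Lie group; choosing $N$ fine enough that $A$ and $B$ are essentially $N$-invariant, the pseudometric descends to $G/N$ where the one-dimensional direction integrates via the exponential map to a one-parameter subgroup, whose dual character extends to the desired homomorphism $\chi$. Compactness of $G$ forces the target to be $\TT$ rather than $\RR$. Taking $I$ and $J$ to be intervals slightly larger than $\chi(A)$ and $\chi(B)$ and then applying Kemperman's inequality on $\TT$ produces the desired containments with defect bounded linearly in $\delta\alpha$. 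The main obstacle I foresee is obtaining the \emph{sharp} exponent $\varepsilon=O(\delta)$: the qualitative tools from approximate-group theory and Hilbert's Fifth Problem produce only qualitative outputs, so the linear dependence must be extracted by a direct argument tracking every submodularity inequality rather than by appealing to a compactness or nonstandard limit. Making the threshold $K=K(s)$ uniform across all connected compact $G$ is a further delicate uniformization step.
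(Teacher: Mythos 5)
Your high-level strategy (pass to a bounded-dimension Lie quotient via Carolino's theorem and Gleason--Yamabe, build a pseudometric from translates of $A$, and extract a homomorphism to $\TT$) is the same as the paper's, but the proposal has two genuine gaps at exactly the points where the paper does its real work.

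First, the claim that submodularity forces the sublevel sets $U_t$ of $d$ to ``grow in a single direction'' does not by itself give the local almost-linearity of $d$ that everything downstream depends on. In the paper this is obtained only after a substantial detour: one must first prove that a nearly minimally expanding set of small measure cannot be a toric nonexpander (Theorem~\ref{thm:toricexpander}), which produces a one-dimensional torus $T$ along which \emph{every} fiber of $A$ and of $B$ is short; one then runs the inverse theorem on $T\cong\TT$ fiberwise (Lemma~\ref{lem: fibers of same length 1}), builds a ``core'' of $A$ to control error propagation (Lemma~\ref{lem: core}), and only then deduces that $d_A(g_1,g_2)=\mu_G(A)-\mu_G(g_1A\cap g_2A)$ is $O(\alpha\mu_G(A))$-linear and path monotone (Proposition~\ref{prop: almost linear metric from local}). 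Your submodularity heuristic names the phenomenon but supplies no mechanism for it, and in particular gives no quantitative error bound, which is what the sharp exponent requires.

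Second, and more seriously, the step ``the one-dimensional direction integrates via the exponential map to a one-parameter subgroup, whose dual character extends to the desired homomorphism $\chi$'' does not work as stated. A one-parameter subgroup of a compact Lie group is generically dense in a maximal torus, which is not normal, and a character of a non-normal torus does not extend to a character of $G$ (indeed $G$ may be simple and admit no nontrivial character at all --- ruling this out is the content of Theorem~\ref{thm: maingap}). Even in the exact case the paper must argue separately that $\ker d$ is normal (Proposition~\ref{prop: strong linear}), and in the approximate case $\ker d$ need not be normal at all; the entire machinery of Section~7 (relative signs, total weights of irreducible sequences, the measurable-selection construction of a multivalued almost homomorphism, the Grove--Karcher--Ruh stability theorem, and automatic continuity) exists precisely to manufacture $\chi$ from the almost-linear pseudometric with linear error control. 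Your proposal skips this entirely, and your closing admission that the sharp dependence $\varepsilon=O(\delta)$ and the uniformity of $K(s)$ over all $G$ remain ``obstacles'' concedes that the quantitative content of the theorem --- which is its main point --- is not established.
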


It is worth noting that the linear dependence between $\varepsilon$ and $\delta$ is the best possible up to a constant factor. The conclusions in Theorems~\ref{thm:mainequal} and \ref{thm:mainapproximate}, with suitable modifications, hold for arbitrary $A,B \subseteq G$ with inner measures; see Section~\ref{sec: 9}.

The proof Theorem~\ref{thm:mainapproximate} yields a number of auxiliary results. One of them is a short proof of the main result in~\cite{T18} and its sharp-exponent improvement in~\cite{ChristIliopoulou}; see Theorem~\ref{thm: abelian case}.  We also showed quantitatively that measure approximate groups of a compact Lie group cannot be a ``Kakeya set'' with respect to the ``torus directions''; see Theorem~\ref{thm: Torictransversal}.
 Most notably, we obtain the following measure expansion gap result for sets in connected compact simple Lie groups. 
\begin{theorem}[Expansion gaps in compact simple Lie groups]\label{thm: maingap}
There is a constant $\eta\geq 10^{-12}$ such that the following holds. Let $d>0$ be an integer. There is a constant $C>0$ depending only on $d$ such that if $G$ is a connected compact simple Lie group of dimension at most $d$, and $A$ is a compact set of $G$ with $0<\mu_G(A)<C$. Then 
\[
\mu_G(A^2)>(2+\eta)\mu_G(A).
\]
\end{theorem}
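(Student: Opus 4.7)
The plan is to argue by contradiction, reducing to Theorem~\ref{thm:mainapproximate} after a preliminary iteration, and exploiting the fact that a connected compact simple Lie group $G$ admits no nontrivial continuous homomorphism to $\TT$. The latter observation is immediate from Lie theory: any such homomorphism would induce a nonzero homomorphism of Lie algebras $\mathfrak{g}\to\RR$, but a simple non-abelian $\mathfrak{g}$ satisfies $[\mathfrak{g},\mathfrak{g}]=\mathfrak{g}$, which forces every homomorphism to the abelian $\RR$ to vanish.

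Since, for each fixed dimension $d$, there are only finitely many connected compact simple Lie groups up to isomorphism (the classification of simple Lie algebras is finite in each dimension, and each simple Lie algebra admits only finitely many compact Lie group forms), it suffices to prove the gap for each such $G$ separately and then set $C=C(d)$ to be the minimum of the resulting constants. Fix such a $G$ and, for contradiction, a sequence of compact $A_n\subseteq G$ with $\mu_G(A_n)\to 0$ and $\mu_G(A_n^2)\le (2+\eta)\mu_G(A_n)$, where $\eta=10^{-10}$.

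The direct obstacle to invoking Theorem~\ref{thm:mainapproximate} is that its parameter $s=\min\{\mu_G(A),1-2\mu_G(A)\}$ collapses to $0$ along the chosen sequence, and the constant $K(s)$ may then degenerate. To correct this I would iterate: by Kemperman's inequality, $\mu_G(A_n^k)\ge k\mu_G(A_n)$ as long as the right side is below $1$, so one may select the minimal $k_n$ with $\mu_G(A_n^{k_n})\ge 1/10$ and set $B_n:=A_n^{k_n}$, ensuring $\mu_G(B_n)$ lies in a fixed window around $1/10$. The main technical step is then to show that $B_n$ inherits near-minimal doubling, $\mu_G(B_n^2)\le (2+\eta')\mu_G(B_n)$, with $\eta'$ depending only on $\eta$. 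Theorem~\ref{thm:mainequal}(v) provides the heuristic: in the exact equality case $A=\chi^{-1}(I)$ the iterate $A^k=\chi^{-1}(kI)$ still realizes doubling exactly $2$, so stability should propagate. Quantitatively I would prove by induction on $j\le 2k_n$ that $\mu_G(A_n^{j+1})\le \mu_G(A_n^j)+(1+O(\eta))\mu_G(A_n)$, and sum to obtain $\mu_G(A_n^{2k_n})\le (2+O(\eta))\mu_G(B_n)$.

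Once the propagation is in hand, Theorem~\ref{thm:mainapproximate} applied to the pair $(B_n,B_n)$ with $s\ge 1/11$ and $\eta'<K(s)\varepsilon$ for a fixed small $\varepsilon>0$ yields a continuous surjective homomorphism $\chi_n:G\to\TT$, contradicting the opening remark. The numerical value $\eta>10^{-10}$ should arise from tracking the constant in the propagation step together with $K(s)$ at $s\approx 1/10$. The main obstacle is precisely this propagation from $A_n$ to $B_n=A_n^{k_n}$: since $k_n\to\infty$ as $\mu_G(A_n)\to 0$, one must control the cumulative Kemperman defect at each step. The cleanest route seems to be a direct induction, guided by the equality case, with a possible bootstrap via Theorem~\ref{thm:mainapproximate} at intermediate scales if the induction runs into trouble.
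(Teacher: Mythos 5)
Your overall framing is sound in two respects: a connected compact simple Lie group indeed admits no nontrivial continuous homomorphism to $\TT$, and you correctly identified that Theorem~\ref{thm:mainapproximate} cannot be invoked directly because its constant $K(s)$ degenerates as $s=\mu_G(A)\to 0$ (in the paper's proof of Theorem~\ref{thm:section7main} the relevant constants scale like $1/s$ via Lemma~\ref{lem:sml}). However, your repair — replacing $A_n$ by $B_n=A_n^{k_n}$ of measure $\approx 1/10$ — rests on the propagation claim $\mu_G(A_n^{j+1})\le \mu_G(A_n^j)+(1+O(\eta))\mu_G(A_n)$ uniformly for $j\le 2k_n$ with $k_n\to\infty$, and this is a genuine gap, not a routine induction. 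From the single hypothesis $\mu_G(A^2)\le(2+\eta)\mu_G(A)$, the general sumset machinery (Pl\"unnecke--Ruzsa, or Tao's noncommutative version via Theorem~\ref{thm: find approximate group}) yields only multiplicative control $\mu_G(A^j)\le K^{O(j)}\mu_G(A)$, never the additive increment $(1+O(\eta))\mu_G(A)$ per step. The additive increment is, in every known proof, a \emph{consequence} of first knowing that $A$ is essentially $\chi^{-1}(I)$ for an interval $I$ — i.e., of the structure theorem you are trying to reach. Under your contradiction hypothesis there is no such structure available, so the induction step has no starting point; and note that even your preliminary claim that $\mu_G(B_n)$ lands "in a fixed window around $1/10$" already requires a one-step upper bound on $\mu_G(A^{k_n})$ in terms of $\mu_G(A^{k_n-1})$, which is the same unproven estimate. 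The suggested "bootstrap via Theorem~\ref{thm:mainapproximate} at intermediate scales" does not escape this, since at every intermediate scale below a fixed threshold the same degeneration of $K(s)$ recurs.

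For contrast, the paper's proof never leaves the small-measure regime: it applies Theorem~\ref{thm:toricexpander} to produce a one-dimensional torus $H$ along which all fibers of $A$ are short (this is where the constant $C=C(d)$ enters), forms the pseudometric $d_A(g_1,g_2)=\mu_G(A)-\mu_G(g_1A\cap g_2A)$, shows via Propositions~\ref{prop: almost linear metric from local} and~\ref{prop: local monotone imply global monotone} that it is $\gamma$-linear and $\gamma$-monotone with $\gamma=O(\alpha\mu_G(A))$ — crucially, the error tolerance scales \emph{with} $\mu_G(A)$, so nothing degenerates as $\mu_G(A)\to 0$ — and then Theorem~\ref{thm: homfrommeasurecompact} manufactures the forbidden homomorphism $G\to\TT$. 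If you want to salvage your approach, you would need to first establish the fiber/shape analysis of Section~\ref{sec: geometry II} for $A_n$ itself, at which point the detour through $B_n$ becomes unnecessary. (Your finiteness-of-simple-groups reduction is also not needed: the paper's constants depend on $G$ only through $d$.)
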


  We did not try to optimize the constant $\eta$ of Theorem~\ref{thm: maingap} in this paper. 
One may compare Theorem~\ref{thm: maingap} with expansion gaps for finite sets. The latter problem is well studied, which is initialed by  Helfgott~\cite{Helfgott08} where he proved an expansion gap in $\mathrm{SL}_2(\mathbb Z/p\mathbb Z)$. Results on the expansions for finite sets are one of the main ingredients in proving many of spectral gap results. For example, the result by Helfgott is largely used in the proof by Bourgain and Gamburd~\cite{BG08,BG12}. De Saxc\'e proved in \cite{de15} an expansion gap results in simple Lie groups, which is used in the later proof of  spectral gap results~\cite{Bde16,BAG17}. 
For more background in this direction, we refer the reader to \cite{Breuillard18,Tao-expansion}.

\subsection{Notation and convention} 
Throughout, let $k$ and $l$ range over the set $\ZZ$ of integers, and $m$ and $n$ range over the set $\NN=\{0,1,\ldots\}$ of natural numbers. A constant in this paper is always a positive real number. For real valued quantities $r$ and $s$, we will use the standard notation $r= O(s)$ and $s= \Omega(r)$ to denote the statement that $r< Ks$ for an absolute constant $K$ independent of the choice of the parameters underlying $r$ and $s$.  If we wish to indicate dependence of the constant on an additional
parameter, we will subscript the notation appropriately.

We let $G$ be a locally compact group, and $\mu_G$ a left Haar  measure on $G$. We normalize $\mu_G$, i.e., scaling by a constant to get $\mu_G(G)=1$, when $G$ is compact. For $\mu_G$-measurable $A\subseteq G$ and a constant $\varepsilon$, 
we set
\[  \Stab^{\varepsilon}_{G}(A) =\{ g \in G : \mu_G( A \tri gA) \leq \varepsilon \} \  \text{ and } \  \Stab^{<\varepsilon}_G(A) =\{ g \in G : \mu_G( A \tri gA) < \varepsilon\} .
\] 
Suppose $A$, $B$, and $AB$ are $\mu_G$-measurable sets in $G$. The {\bf discrepancy} of $A,B$ in $G$ is defined by
\[
\dis_G(A,B)=\mu_G(AB)-\mu_G(A)-\mu_G(B). 
\]
When $G$ is connected, we always have $\dis_G(A,B)\geq0$.

Let $H$ range over closed subgroups of $G$. We let  $\mu_H$ denote  a left Haar measure of $H$, and normalize $\mu_H$ when $H$ is compact. Let $G/H$ and $H\backslash G$ be the left coset space and the right coset space with quotient maps
\[ \pi: G \to G/H\  \text{ and }\ \widetilde{\pi}: G \to H\backslash G
\] Given a coset decomposition of $G$, say $G/H$, a left-fiber of a set $A\subseteq G$ refers to $A\cap xH$ for some $xH\in G/H$. We also use $\mu_H$ to denote the fiber lengths in the paper, that is, we sometimes write $\mu_H(A\cap xH)$ to denote $\mu_H(x^{-1}A\cap H)$. By saying that $G$ is Lie group, we mean $G$ is a real Lie group with finite dimension, and we denote $\dim(G)$ the real dimension of $G$.

\section{Outline of the argument}\label{sec: section 2}

In this section, we informally explain some of the main new ideas of the proofs. We decided to write a slightly longer outline as some of the later computations are rather technical.

\subsection{Overview of the strategy}  \label{Sec: Overview} After handling a number of easy cases, the proofs of  Theorem~\ref{thm:mainequal} and Theorem~\ref{thm:mainapproximate} require constructing appropriate continuous and surjective group homomorphisms into either $\RR$ or $\TT$ under the given data of a minimally or nearly minimally expanding pair $(A,B)$ on a connected unimodular group $G$. The key difficulty of the problem is that many methods in the abelian settings (e.g., fourier analysis) has no generalization to the nonabelian settings which is obviously useful for our purpose. We will get around this, essentially using an induction on dimension strategy to reduce to the abelian settings.

Our argument can be broadly divided into three steps and a small preparation. For the preparation, we use a submodularity argument also used in~\cite{T18} and~\cite{ChristIliopoulou} to arrange that $\mu_G(A) =\mu_G(B)$ are rather small  when $G$ is compact.

In the first step, using ideas from the solution of Hilbert's Fifth problem, arithmetic combinatorics, and model theory, we choose a compact and connected normal subgroup $H$ of $G$ such that $G/H$ is a Lie group of bounded dimension and $H$ is ``in roughly the same direction'' as $A$ and $B$ (i.e., the measures of the images of $A$ and $B$ in $G/H$ are not too large compared to  $A$ and $B$). Then, by studying the geometric shape of $A$ and $B$ with respect to $H$, we prove a coarse version of our theorem:  the minimally or nearly minimally expanding pair $(A,B)$ arises from a minimally or nearly minimally expanding pair $(A',B')$ on $G/H$. This also essentially reduces the problem to the case of Lie group, where we are aided by a powerful structure theory. 

The second step reduces the problems of constructing the desired group homomorphism onto $\TT$ or $\RR$ to showing that 
$ d_A(g_1, g_2) := \mu_G(A)  - \mu_G(g_1A \cap g_2 A)  $
satisfies 
\begin{equation} \label{eq: almostlinearlyadditivelyofdA}
    d_A(g_1, g_3) = |d_A(g_1, g_2)\pm d_A(g_2,g_3)|+\varepsilon \ \text{ and } \ d_A(\id, g^2) = 2d_A(\id, g) +\varepsilon,
\end{equation}
for some error $\varepsilon$, and assuming  $d_A(g_i, g_j)<\mu_G(A)/4$ for $g_i, g_j \in \{\id, g, g_1, g_2, g_3 \}.$ 
Note that $d_A$ is a pseudometric (i.e., it satisfies all the properties of a metric except $d_A(g_1,g_2)=0$ implying $g_1=g_2$). We remark that similar pseudometrics are also used in the solution of Hilbert's Fifth problem. Moreover,
$$\Stab^{<\varepsilon}_G(A)=\{g \in G: d_A(\id, g)< \varepsilon/2\}.$$
Hence, looking at $d_A$ can be seen as an alternative way of  considering approximate stabilizers, a recurring theme in the study of definable groups in model theory and approximate groups. 

In the third step, we assume that $G$ is a Lie group.
Using probabilistic and Lie-theoretic arguments, we choose a connected closed subgroup $H$ of $G$ with smaller dimension such that all the cosets of $H$ intersect $A$ and $B$ ``transversally in measure'' (i.e. the intersection of $A$ or $B$ with each coset of $H$ has small measure). When $G$ is compact, $H$ can be chosen to be a one-dimensional torus subgroups, so this can be seen as showing that $A$ and $B$ cannot be ``Kakeya sets'' in ``torus subgroups directions''. We can assume $H$ already satisfy the conclusion of Theorem~\ref{thm:mainequal} and Theorem~\ref{thm:mainapproximate} as an induction hypothesis. We then obtain a description of the geometric shapes of $A$ and $B$ relative to the cosets of $H$. Using that, we show that $d_A$ satisfies \eqref{eq: almostlinearlyadditivelyofdA} which completes the proof.

 \begin{figure}[h]
     \centering
     \includegraphics[width=5.9in]{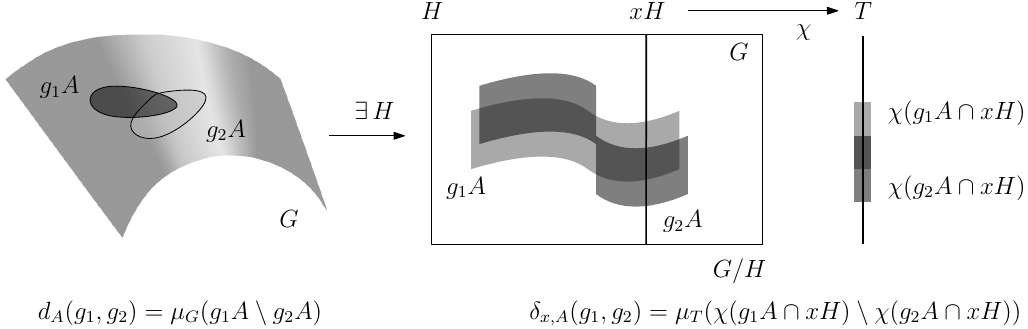}
     \caption{The third step}
     \label{fig:intuition}
 \end{figure}

The remaining part of Section~\ref{Sec: Overview} will further elaborate ideas from  the steps above.  Sections~\ref{Sec: Overviewstep3}, \ref{Sec: Overviewstep2}, and \ref{Sec: Overviewstep1} and  will explain further a number of technical innovations. 

The bulk of the second step is to produce a continuous group homomorphism onto $\RR$ or $\TT$ from the data of a pseudometric as in~\eqref{eq: almostlinearlyadditivelyofdA}.
Let us assume we already had a continuous surjective group homomorphism $\chi: G \to \TT$,  and $d_\TT$ is the Euclidean metric on $\TT$. Set $d(g_1, g_2) = d_\TT(\chi(g_1), \chi(g_2))$. It is easy to see that $d$ is a pseudometric  on $G$ with the ``linear''  property in \eqref{eq: almostlinearlyadditivelyofdA} with $\varepsilon=0$.
In this case $\chi$ can be recovered from $d$ by noticing that
  $$ \ker \chi = \{ g \in G : d(g, \id ) =0  \}. $$
 It turns out that it is also possible for $\varepsilon>0$ as in \eqref{eq: almostlinearlyadditivelyofdA}, but it requires developing some nontrivial machinery, especially when $\varepsilon$ grows linearly on the radius of the pseudometric; see Section~\ref{Sec: Overviewstep2} for details.

Let us next consider the geometrical description of $(A,B)$ in the third step and
how it can be used to show \eqref{eq: almostlinearlyadditivelyofdA}.
We suppose a ``transversally in measure'' $H$ is already obtained.  
We visualize $G$ in two ways: a rectangle  with the horizontal side representing $G/H$ and each of the vertical section representing a left coset of $H$, and a similar dual  picture for $H\backslash G$; see the middle figure of Figure~\ref{fig:intuition}. The main idea is to show that $g_1A$ and $g_2A$ geometrically (under this coset decomposition) look like in the picture with $g_1, g_2 \in G$ in a suitable neighborhood of $\id$, and behave rigidly under translations.  
(For instance, we want the ``fibers'' of $A$ and $B$, i.e. intersections of $A$ or $B$ with coset of $H$, to be preimages of intervals of $T$, and all the nonempty ``fibers'' of $g_1A$ to have similar ``lengths''.)  
By induction hypothesis, we can assume that $H$ satisfies the conclusions in the main theorems.
In particular, the Euclidean metric on $\TT$ induces a pseudometric $\delta_{x,A}$ on a generic fiber $xH$. The key point is that, the nice geometric shape of $A$ can pass some of the properties of $\delta_{x,A}$ to the global pseudometric $d_A$.

The geometric idea in the first step is somewhat similar to what described above for the third step, so to end Section~\ref{Sec: Overview}, let us explain how ideas from arithmetic combinatorics and model theory comes into play.  Using an argument from~\cite{T08}, we can produce from $(A,B)$  a commensurable open approximate groups $S$. Then, by a result from Carolino's thesis~\cite{thesis}, which can be regarded as the continuous version of the Lie model theorem from~\cite{Hrushovski} and~\cite{BGT}, we produce a continuous surjective group homomorphism to a Lie group with bounded dimension. The bound on dimension will play a role in determining what it means for $(A,B)$ to have small measure in the third step, and contribute in the error bound in Theorem~\ref{thm:mainapproximate}.

\subsection{The first step: Nearly minimal expansions and quotients} \label{Sec: Overviewstep3}
Suppose $(A, B)$ is a nearly minimally expanding pair on $G$, $H$ is a {\it connected}, {\it compact} and {\it normal} subgroup of $G$,  $\pi: G \to G/H$ is the quotient map, and 
   $$  \mu_{G/H}(\pi(A))+ \mu_{G/H}(\pi(B)<1.$$
The goal of this step is to show the following {\it quotient domination} result: There is a nearly minimally expanding pair $(A', B')$ on $G/H$ such that $\mu_G( A \tri \pi^{-1}(A'))$ and $\mu_G( B \tri \pi^{-1}(B'))$ are both small.

 \begin{figure}[h] \label{fig: quotient}
    \centering
    \includegraphics{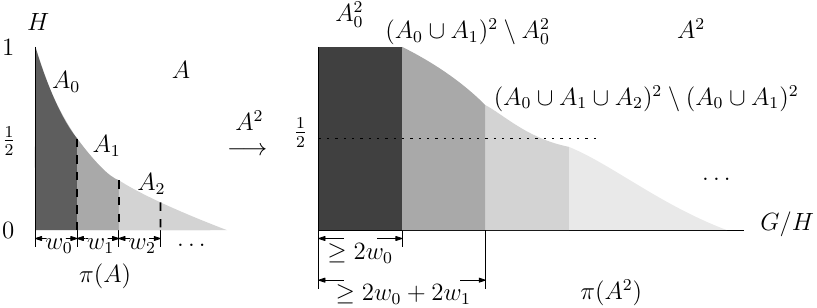}
    \caption{Lower bound for $\mu_G(A^2)$.}
    \label{fig:quotient}
\end{figure}

To illustrate the idea, we focus on the special case with $A=B$. Employing the geometric language in Section~\ref{Sec: Overview}, we call $\mu_{G/H}(\pi(A))$ the \emph{width} of $A$, for each $g$ in $G$, we call $A \cap gH$  a \emph{fiber} of $A$, and refer to $\mu_H(g^{-1}A\cap H)$  as its \emph{length}. We consider a further special case assuming that  $A$ can be partitioned into $N+1$ parts  $A=\bigcup_{i=0}^N A_i$ such that the images under $\pi$ of the $A_i$'s are compact and pairwise disjoint, $A_i$ has width $w_i$,  the fibers in $A_0$ all have length $\geq 1/2$, the fibers in $A_{i}$ all have the same length $l_i \leq  1/2$ for each $i \geq 1$, and $l_i \geq l_{i+1}$ for all $i <N$. This further special case is, in fact, quite representative, as we can reduce the general problem to it using approximation techniques.

The proof of this step can be seen as the following ``spillover'' argument. Applying the Kemperman inequalities for $H$ and $G/H$, we learn that all the fibers in $A^2_0$ has length $1$, and the width of $A_0^2$ is at least $2w_0$. By Fubini's theorem, $\mu_G(A_0^2)$ is at least $2w_0$. Next, consider $(A_0 \cup A_1)^2$. A similar argument gives us that all the fibers in $(A_0 \cup A_1)^2$ has length at least $2l_1$, and the width of  $(A_0 \cup A_1)^2$ in $G/H$ is at least $2w_0+2w_1$. Note that $2l_1(2w_0+2w_1)$ is a weak bound for $\mu_G((A_0 \cup A_1)^2)$ since the fibers in $A_0^2$ are ``exceptionally long''. Taking all of these into account, a stronger lower bound for $\mu_G(A_0(A_0 \cup A_1))$ is 
$$  2w_0 + 4l_1w_1.   $$
Note that $\mu_{G}(A) = l_0w_0+ \ldots +l_Nw_N$. Hence, $\mu_G(A^2)$ is nearly $2\mu_G(A)$ implies that we must nearly have $w_0=1$, $w_1 = \ldots = w_N =0$, and  $l_0=1$.  From this, one can deduce the conclusion that we want for this step.

\subsection{The second step: Pseudometrics and homomorphisms} \label{Sec: Overviewstep2}
We assume in this section that $G$ is a {\it connected} and {\it compact} Lie group, $d$ is a left-invariant continuous pseudometric on $G$ with the following properties:
\begin{enumerate}[\rm (i)]
    \item (Locally almost linearity) There is $\lambda \in \RR^{>0}$ such that with $$N(\lambda):= \{ g\in G : d(\id, g) < \lambda \},$$ there is $\varepsilon < 10^{-6}\mu_G( N(\lambda))$ such that for all $g_1, g_2, g_3 \in N(\lambda)$
      $$  d(g_1, g_3) \in  |d(g_1, g_2)\pm d(g_2,g_3)| + (-\varepsilon,\varepsilon). $$
    \item (Locally almost monotonicity) With the same $\lambda$ in (2), for all $g \in N_\lambda$,
    $$ |d(\id, g^2) - 2d(\id, g)| \in (-\varepsilon,\varepsilon). $$
\end{enumerate}
We now sketch how to construct a continuous and surjective group homomorphism to $\TT$ from these data. 
A crucial argument we will not be able to get into details here is to show that the local monotonicity condition can be deduced from a weaker property of path monotonicity obtained from the third step (Section~\ref{Sec: Overviewstep1}).

When we are in the special case with $\varepsilon=0$ in property (i), there is a relatively easy argument which also works for noncompact Lie groups. Set $$\ker d =\{ g\in G: d( \id, g)=0\}.$$ 
Using the left invariance, continuity, and triangle inequality,  one can  show that $\ker d$ is a closed subgroup of $G$. Moreover, in this case,  $G/\ker d$ must be isomorphic to $\TT$, and  the pseudometric $d$ locally must agree with a constant multiple of the pullback of the Euclidean metric. These are, perhaps, not too surprising as a Lie group equipped with a locally linear pseudometric is, intuitively, a very rigid object which locally looks like a straight line. In fact, property (ii) is not needed, as it is a consequence of property (i) in this case.

The general case is much harder, as we no longer have the same type of rigidity. In particular,  $\ker d$ might not be normal, and $G/\ker d$ might not be $\TT$ even if $\ker d$ is normal. The reader familiar with the proof of Hilbert's Fifth problem would guess that we might try to slightly modify $d$ to get a locally linear pseudometric $d'$ and use the earlier strategy.  This is still true at the conceptual level, but our actual argument is much more explicit, allowing error control.  We will construct a multi-valued function $\Omega$ from $G$ to $\RR$ with additive properties. From this we get a multi-valued almost group homomorphism $\Psi$ from $G$ to $\TT$ by quotienting $\omega\ZZ$ for a value of $\omega \in \RR^{>0}$ to be described later. The desired group homomorphism $\chi$ can be obtained from $\Psi$ using descriptive set theory and Riemannian geometry.

 For an element $g \in G$, we represent it in ``the shortest way'' as a product of elements in $N(\lambda)$. The following notion capture this idea.
\begin{definition}
A sequence $(g_1,\dots,g_m)$  with  $g_i\in N(\lambda)$ is \emph{irreducible} if $g_{i+1}\cdots g_{i+j}\notin N(\lambda)$  for $2\leq j\leq 4$. 
\end{definition} 
Now, we set $\Omega_m(g)$ be the subset of $\RR$ consists of sums of the form
\begin{equation}
\sum_{i=1}^m \sgn(g_i)d(g_i,\id),
\end{equation}
where $(g_1,\dots,g_m)$ is an irreducible sequence with $g=g_1\cdots g_m$, and  $\sgn(g_i)$ is the \emph{relative sign} defined in Section~\ref{section:pseudometric}. Heuristically, $\sgn(g_i)$  specifies the ``direction'' of the translation by $g_i$ relative to a fixed element. Finally, we set $\Omega(g)  =\bigcup_{m=1}^M \Omega_m(g)$, where we will describe how to choose $M$ in the next paragraph. We do not quite have $\Omega(gg') = \Omega(g)+ \Omega(g')$, but a result along this line can be proven.

Call the maximum distance between two elements of $\Omega(g)$ the error of $\Omega(g)$. We want this error
to be small so as to be able to extract $\chi(g)$ from it eventually.
In the construction, each $d(g_i,\id)$ will increase the error of the image of $\Omega(g)$ by at least $\varepsilon$. Since the error propagate very fast, to get a sharp exponent error bound, we cannot choose very large $M$. To show that a moderate value of $M$ suffices, we construct a ``monitoring system'',  an irreducible sequence of bounded length, and every element in $G$ is ``close to'' one of the elements in the sequence. 
The knowledge required for the construction amounts to an understanding of the size and expansion rate of small stabilizers of $A$, which can also be seen as a refined Sanders--Croot--Sisask-type result for nearly minimally expanding sets.

In the aforementioned case where $d$ is linear (i.e. $\varepsilon =0$), the metric induced by $d$ in $G/\ker d$ is a multiple of the standard Euclidean metric in $\RR/\ZZ$ by a constant $\omega$. 
If we apply the machinery of irreducible sequence to this special case that $\varepsilon=0$, $\Omega(g)$ will be a single-valued function, and we will get for each $g\in G$ that $\Omega(g) = \psi(g) + \omega\ZZ$ for $\psi(g) \in [0, \omega)$. In particular, $\Omega(\id)= \omega\ZZ$, hence 
$
\omega=\inf\big|\Omega(\id) \setminus\{0\}\big|.
$

In general case when $\varepsilon>0$, we can define
\[
\omega=\inf\big|\Omega(\id) \setminus(-M\varepsilon,M\varepsilon)\big|.
\]
By further applying properties of irreducible sequences and the monitor lemma, we show  that for each $g$, $\Omega(g) \subseteq \psi(g) \omega\ZZ + (-M\varepsilon,M\varepsilon)$. For each $g\in G$, we then set $\Psi(g) \subseteq \TT$ to be $\Omega(g)/\omega\ZZ$.

Note that $\Psi$ is ``continuous'' from the way we construct it. We obtain $\chi$ from $\Psi$ by first extracting from $\Psi$ a universal measurable single valued almost-homomorphism $\psi$, then modifying $\psi$ to get a universal measurable group homomorphism $\chi$, and show that $\chi$ is automatically continuous. Many elements of our proof also work for noncompact group. However, the modifying step to get a group homomorphism from an almost group homomorphism does not go through due to the existence of quasi-morphism which are not close to group homomorphism in the noncompact case.

\subsection{The third step: Nearly minimal expansions and subgroups} \label{Sec: Overviewstep1}
We focus on the case where $G$ is a {\em connected} and {\em compact} Lie group, and $(A,B)$ is a nearly minimally expanding pair on $G$ with sufficiently small measure.   Recall that  we want to find a closed and connected subgroup $H$ of $G$ such that the ``length'' $\mu_H(g^{-1} A \cap H)$ of each ``left fiber'' $A \cap gH$ of $A$ is small, and a similar condition hold for ``right fibers'' of $B$. In this case, $H$ can be chosen to be a one-dimensional torus subgroup.

For simplicity, we assume $A=B$. Suppose $A$ is a ``Kakeya set'', i.e, it has a long fiber $A \cap gH$ for every choice of ``direction'' $H$ of $G$. In Section~\ref{subsec: toric exp} we will show that under the condition when $\mu_G(A^2)<M\mu_G(A)$ for some constant $M$, for every $H$, $\mu_G(AH)/\mu_G(A)$ is not too large. This motivates us to define the following notion:
\begin{definition}
$A$ is a \emph{toric $K$-expander} if there is a one dimensional torus subgroup $H$ of $G$ such that $\mu_G(AH)>K\mu_G(A)$.
\end{definition}

We will show that every nonempty compact subset of $G$ with sufficiently small measure must be a toric $K$-expander. 
Let us present here a pseudo-argument, which nevertheless illustrate the idea. Assume $A$ is not a toric $K$-expander. Obtain finitely many torus subgroups $H_1, \ldots, H_n$ of $G$ such that $$G=H_1\cdots H_n.$$
 Let us pretend that using the assumption $\mu_G(AH_1)\leq  K \mu_G(A)$ we can cover $AH_1$ with $(K+1)$ right translations of $A$. It can be then shown that $AH_1$ is not a toric $K(K+1)$-expander. Next, we further pretend that $AH_1H_2$ can be covered with $K(K+1)+1$ right translations of $AT_1$ which can then be covered by $K(K+1)^2+ (K+1)$ right translations of $A$. Continuing the procedure, we get $C(K)$ such that $AT_1\cdots T_n=G$ can be covered by $C(K)$ right translations of $A$. Thus, $\mu_G(A)> 1/C(K)$, contradicting the assumption that $\mu_G(A)$ is very small.

 The pseudo-argument in the preceding paragraph does not work in most of the cases. In particular, one cannot deduce from $\mu_G(AH_1)< K \mu_G(A)$  that $AH_1$ can be covered by finitely many right translations of $A$. However, it does contain some truth, and we will be able to use a probabilistic argument to approximate this pseudo-argument.

Now choosing a one-dimensional torus subgroup $H$ of $G$ such that for all $x \in G$ and $y \in G$, the fibers $xH\cap A$ and $B\cap Hy $ are both short. We will show that the set $A$ and $B$ have the shape as described in Section 2.1. Without loss of generality, we can arrange that the width $\mu_G(AH)$ of $A$ in $G/H$ is at least the width $\mu_G(HB)$ of $B$ in $H \backslash G$. Choose uniformly at random $xH \in AH $, and applying the Kemperman inequality for $H$, we have 
\begin{align*} 
 \mu_G(AB)&\geq\mathbb E_{xH\in AH} \mu_G( (A \cap xH) B)\\
 &\geq    \mathbb E_{xH\in AH}\mu_{H} ( A \cap xH ) \mu_{H \backslash G}( HB) + \mu_G(B) \\
 &= \mu_G(A)\frac{\mu_{H\backslash G}(HB)}{\mu_{G/H}(AH)}+\mu_G(B)  \\
& \geq \mu_G(A)+\mu_G(B);
\end{align*}
As $(A,B)$ is nearly minimally expanding, we have $\mu_G(AB)$ is nearly the same as $\mu_G(A)+\mu_G(B)$. The fourth line then gives us that $\mu_G(HB)$ is nearly the same as $\mu_G(AH)$. The second line now gives us that for most of $xH \in AH$, the fiber $(xH \cap A)$ is nearly an interval up to an endomorphism of $H$ by using the induction hypothesis on $H$. From the first line,  $\mu_H(A \cap xH)$ is almost constant as $xH$ ranges through $AH$. 

Using the above geometric properties of $A$ and $B$ together with some probabilistic arguments, in Section~\ref{sec: geometry II} we show that $A$ also behaves rigidly under translations: $A$ look like a horizontal ``strip'' under the coset decomposition, and the intersection of $A$ and $gA$ also behaves like a ``strip'', where $g$ is from a certain neighborhood of identity. Using this, we are able to pass the almost linearity of the pseudometric in $H$ to the pseudometric $d_A$ defined in Section~\ref{Sec: Overview}, as well as the \emph{path monotonicity} which is a necessary ingredient to show assumption (2) in Step 2.

 \subsection{Structure of the paper}

The paper is organized as follows. Section~\ref{sec: Preliminaries} includes some facts about Haar measures and unimodular groups, which will be used in the subsequent part of the paper. A general version of the Kemperman inequality and its inverse theorem in tori are also included in Section~\ref{sec: Preliminaries}. Section~\ref{sec: easy directions of thm 1.1} deals with the more immediate parts of Theorem~\ref{thm:mainequal} and hence sets up the stage for the main part of the argument. Section~\ref{sec: reduction to small} allows us to arrange that in a minimally or a nearly minimally expanding pair $(A,B)$, the sets $A$ and $B$ have small measure (Lemma~\ref{lem:sml}). Sections~\ref{sec: geometry I}, \ref{section:pseudometric}, and \ref{sec: geometry II} contain main new technical ingredients of the proof, which will be put together in Section~\ref{sec: 9} to complete the proofs of Theorem~\ref{thm:mainequal}, Theorem~\ref{thm:mainapproximate}, and Theorem~\ref{thm: maingap}. Steps 1, 2, and 3 discussed in Sections~\ref{Sec: Overviewstep1}, \ref{Sec: Overviewstep2}, and \ref{Sec: Overviewstep3}, corresponds to Sections~\ref{sec: geometry I}, \ref{section:pseudometric}, and \ref{sec: geometry II} respectively. 

In Section~\ref{sec: 6.1}, we proved the quotient domination theorem (Theorem~\ref{thm: criticality transfer}), which allow us to transfer the problem into certain quotient groups. In Section~\ref{sec: 6.2} we obtained a coarse version of the main theorems (Proposition~\ref{prop: coarsetheorem}). These two sections reduce the problem to a bounded dimension Lie group. Section~\ref{sec: 6.3} contains structural results assuming we have an appropriate homomorphism (Proposition~\ref{prop: struc on AB}). We  also give 
a new proof of the inverse Kneser's inequality~\cite{T18} with a sharp exponent bound (Theorem~\ref{thm: abelian case}).
In the next two sections, we will focus on constructing a suitable homomorphism.

In Section~\ref{sec: 7.1}, we showed that a locally linear pseudometric on $G$ would induce a continuous surjective homomorphism to either $\RR$ or $\TT$, with compact kernel (Proposition~\ref{prop: strong linear}). Sections~\ref{sec: 7.2} and~\ref{sec: 7.3} study the locally almost linear pseudometric in compact Lie groups. In particular, we proved that path monotonicity implies monotonicity (Proposition~\ref{prop: localmonotoneimplyglobalmonotone}), and for almost monotone almost linear pseudometric, one can also find a homomorphism mapping to $\TT$ (Theorem~\ref{thm: homfrommeasurecompact}). 

In Section~\ref{subsec: toric exp}, we show that given a small measure expansion set with small measure, one can find a torus subgroup that is transversal in measure (Theorem~\ref{thm: Torictransversal}). We construct the pseudometric from geometric properties of nearly minimal expansion sets in Sections~\ref{subsec: linear fiber} and~\ref{subsec: almost linear fiber}. Section~\ref{subsec: linear fiber} provides a locally linear pseudometric from minimally expansion sets (Proposition~\ref{prop: equal pseudometric linear}). In Section~\ref{subsec: almost linear fiber}, we construct a path monotone locally  almost linear pseudometric (Proposition~\ref{prop: almost linear metric from local}). 

The dependency diagram of the paper is as below.
\begin{figure}[h]
    \centering
    \includegraphics{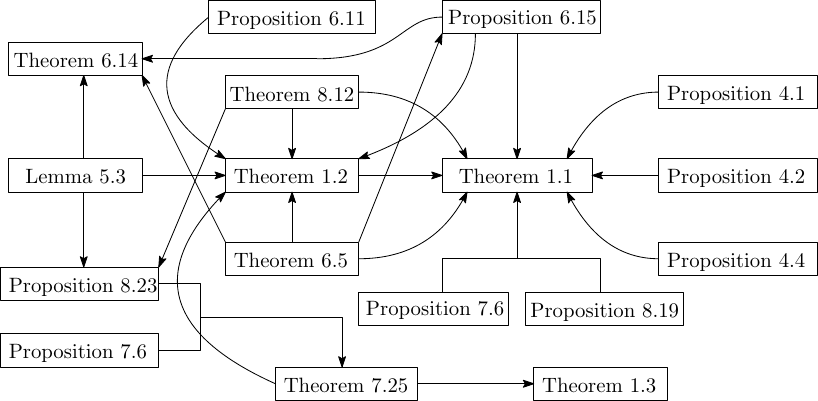}
\end{figure}

\section{Preliminaries}\label{sec: Preliminaries}
 Throughout this section, we assume that $G$ is a \emph{connected} locally compact group (in particular, Hausdorff) equipped with a left Haar measure $\mu_G$, and $A,B \subseteq G$ are nonempty.

\subsection{Locally compact groups and Haar measures}\label{sec: 3.1}

  Below are some basic facts about $\mu_G$ that we will use; see~\cite[Chapter~1]{Harmonicanalysis} for details:
\begin{fact} \label{fact: Haarmeasurenew}
Suppose $\mu_G$ is either a left or a right Haar measure on $G$. Then:
\begin{enumerate}[\rm (i)]
    \item If $A$ is compact, then $A$ is $\mu_G$-measurable and $\mu_G(A)< \infty$.
    \item If $A$ is open, then $A$ is $\mu_G$-measurable and $\mu_G(A)>0$.
    \item \emph{(Outer regularity)} If $A$ is $\mu_G$-measurable, then there is a decreasing sequence $(U_n)$ of open subsets of $G$ with $A \subseteq U_n$ for all $n$, and $\mu_G(A) = \lim_{n \to \infty} \mu_G(U_n). $
    \item \emph{(Inner regularity)} If $A$ is $\mu_G$-measurable, then there is an increasing sequence $(K_n)$ of compact subsets of $A$ such that $\mu_G(A) = \lim_{n \to \infty} \mu_G(K_n). $
     \item \emph{(Measurability characterization)} If there is an increasing sequence $(K_n)$ of compact subsets of $A$, and a decreasing sequence $(U_n)$ of open subsets of $G$ with $A \subseteq U_n$ for all $n$ such that $\lim_{n \to \infty} \mu_G(K_n) = \lim_{n \to \infty} \mu_G(U_n) $, then $A$ is measurable.
     \item \emph{(Uniqueness)} If $\mu'_G$ is another measure on $G$ satisfying the properties (1-5), then there is $C \in \RR^{>0}$ such that $\mu'_G = C\mu_G$.
    \item  \emph{(Continuity of measure under symmetric difference)} Suppose $A \subseteq G$ is measurable, then the function $G \to \RR, g \mapsto \mu_G(A \tri gA)$ is continuous.  
\end{enumerate}
\end{fact}
  We remark that the assumption that $G$ is connected implies that every measurable set is $\sigma$-finite (i.e., countable union of sets with finite $\mu_G$-measure). Without the connected assumption, we only have inner regularity for $\sigma$-finite sets. From Fact~\ref{fact: Haarmeasurenew}(vii), we get the following easy corollary:
\begin{corollary} \label{cor: Stabilizerisclosed}
Suppose $A$ is $\mu_G$-measurable and $\varepsilon$ is a constant. Then $\Stab^{\varepsilon}_G(A)$ is closed in $G$, while $\Stab^{<\varepsilon}_G(A)$ is open in $G$. In particular, $\Stab^{0}_G(A)$ is a closed subgroup of $G$.
\end{corollary}
  We say that $G$ is {\bf unimodular} if $\mu_G$ (and hence every left Haar measure on $G$)  is also a right Haar measure. The following is well known and can be easily verified: 

\begin{fact} \label{fact: measureofinverse}
If $G$ is unimodular, $A$ is $\mu_G$-measurable, then $A^{-1}$ is also $\mu_G$-measurable and $\mu_G(A) = \mu_G(A^{-1})$.
\end{fact}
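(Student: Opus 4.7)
The plan is to use the uniqueness of Haar measure up to a scalar. Define $\nu: \mathcal{B}(G) \to [0,\infty]$ by $\nu(A) := \mu_G(A^{-1})$, where $A^{-1} = \{a^{-1} : a \in A\}$. Because inversion $g \mapsto g^{-1}$ is a homeomorphism of $G$, the map $\nu$ is well-defined on Borel sets, is finite on compact sets (the inverse image of a compact set under a homeomorphism is compact), is positive on nonempty open sets, and inherits inner/outer regularity from $\mu_G$ via Fact~\ref{fact: Haarmeasure}. So $\nu$ is a Radon measure.

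Next I would verify that $\nu$ is a left Haar measure, crucially using the unimodularity hypothesis. For $g \in G$ and a Borel $A$, we have
\[
\nu(gA) = \mu_G((gA)^{-1}) = \mu_G(A^{-1}g^{-1}).
\]
Since $G$ is unimodular, $\mu_G$ is right-invariant, so $\mu_G(A^{-1}g^{-1}) = \mu_G(A^{-1}) = \nu(A)$, showing that $\nu$ is left-invariant. By the uniqueness of left Haar measure up to positive scalar, there is a constant $c > 0$ such that $\nu = c\,\mu_G$.

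Finally, I would pin down $c = 1$ by evaluating on a symmetric test set. Take any open neighborhood $U$ of the identity with compact closure (which exists by local compactness) and set $V := U \cap U^{-1}$. Then $V$ is a nonempty open symmetric set (that is, $V = V^{-1}$), so by Fact~\ref{fact: Haarmeasure}(1)(2) we have $0 < \mu_G(V) < \infty$. Then
\[
c\,\mu_G(V) = \nu(V) = \mu_G(V^{-1}) = \mu_G(V),
\]
which forces $c = 1$, and hence $\mu_G(A) = \nu(A) = \mu_G(A^{-1})$ for all measurable $A$.

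The argument is essentially routine; the one point that really uses unimodularity (and is the only place where the hypothesis enters) is the computation of $\nu(gA)$, where turning $A^{-1}g^{-1}$ back into $A^{-1}$ demands right-invariance. Without unimodularity, one would only recover the modular function relation $\mu_G(A^{-1}) = \int_A \Delta(g^{-1})\,d\mu_G(g)$, which is not relevant here.
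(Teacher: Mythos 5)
Your proof is correct: the paper states this as a Fact without proof (calling it ``straightforward from the definition''), and your argument is the standard one — showing $A \mapsto \mu_G(A^{-1})$ is a left-invariant Radon measure via right-invariance, invoking uniqueness of Haar measure, and normalizing the constant on a symmetric open set of finite positive measure. No gaps.
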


 We use the following isomorphism theorem of topological groups. 
\begin{fact}\label{fact: first iso thm}
Suppose $G$ is a locally compact group, $H$ is a closed normal subgroup of $G$. Then we have the following.
\begin{enumerate}[\rm (i)]
    \item \emph{(First isomorphism theorem)} Suppose $\phi: G \to Q$ is a continuous surjective group homomorphism with $\ker \phi = H$.  Then the exact sequence of groups
        $$  1 \to H \to G \to Q \to 1 $$ 
    is an exact sequence of topological groups if and only if  $\phi$ is open; the former condition is equivalent to saying that  $Q$ is canonically isomorphic to $G/H$ as topological groups. 
    \item \emph{(Third isomorphism theorem)} Suppose $S \leq  G$ is closed, and $H \leq S$. Then $S/H$ is a closed subgroup of $G/H$. If $S\vartriangleleft G$ is normal, then $S/H$ is a normal subgroup of $G/H$, and we have the exact sequence of topological groups
    $$  1 \to S/H \to G/H \to G/S \to 1; $$
    this is the same as saying that $(G/H)/(S/H)$ is canonically isomorphic to $G/S$ as topological groups. 
\end{enumerate}
\end{fact}

Suppose $H$ is a closed subgroup of $G$. The following fact allows us to link Haar measures on $G$ with the Haar measures on $H$ for unimodular $G$ and $H$:

\begin{fact}[Quotient integral formula]\label{fact: QuotientIF} 
Suppose $H$ is a closed subgroup of $G$ with a left Haar measure $\mu_H$. If $f$ is a continuous function on $G$ with compact support, then
$$ xH \mapsto \int_H f(xh) \d\mu_H(x). $$
defines a function $f^H: G/H \to \RR$ which is continuous and has compact support. 
If both $G$ and $H$ are unimodular, then there  is unique invariant Radon measures $\mu_{G/H}$ on $G \slash H$ such that for all continuous function $f: G \to \RR$ with compact support, the following integral formula holds
    $$ \int_G f(x) \d\mu_G(x) = \int_{G/H} \int_H f(xh) \d\mu_H(h) \d\mu_{G /H}(xH). $$
A similar statement applies replacing the left homogeneous space $G/H$ with the right homogeneous space $H \backslash G$.
\end{fact}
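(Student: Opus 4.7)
The plan is to follow the classical Riesz-representation approach to Weil's quotient integral formula, in three stages.

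First, I would verify the easy assertions about $f^H$. Well-definedness on cosets uses left invariance of $\mu_H$ (replacing $x$ by $xh_0$ and changing variable $h \mapsto h_0^{-1}h$). Finiteness of the integral comes from the fact that $\mathrm{supp}(f)$ is compact and $H$ is closed, so the integrand has compact support in $h$ for each $x$; continuity of $f^H$ follows from uniform continuity of $f$ together with dominated convergence; and compact support is immediate because $f^H$ vanishes outside $\pi(\mathrm{supp}(f))$.

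Second, I would construct $\mu_{G/H}$ by defining the positive linear functional
\[
\Lambda : C_c(G/H) \longrightarrow \RR, \qquad \Lambda(f^H) := \int_G f \, d\mu_G,
\]
and invoking the Riesz representation theorem. Two non-trivial inputs are needed. Surjectivity of $f \mapsto f^H$ onto $C_c(G/H)$ is handled by a partition-of-unity argument: given $g \in C_c(G/H)$, pick $\phi \in C_c(G)$ with $\phi^H > 0$ on a neighborhood of $\mathrm{supp}(g)$ and set $f := (g \circ \pi)\cdot \phi / (\phi^H \circ \pi)$. Well-definedness --- showing that $f^H \equiv 0$ forces $\int_G f \, d\mu_G = 0$ --- is the technical heart. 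Here I would pick $\phi \in C_c(G)$ with $\phi^H \equiv 1$ on $\pi(\mathrm{supp}(f))$, rewrite $\int_G f\, d\mu_G = \int_G f(x)\,(\phi^H \circ \pi)(x)\, d\mu_G(x)$, unfold by Fubini, substitute $x \mapsto xh^{-1}$ using unimodularity of $G$, then apply $h \mapsto h^{-1}$ in the inner integral using unimodularity of $H$ to recognise a hidden factor of $f^H \equiv 0$. This is the one place where both unimodularity hypotheses are used. Positivity of $\Lambda$ is then automatic since for nonnegative $g$ the lift $f$ built above is nonnegative, so Riesz yields a Radon measure $\mu_{G/H}$ satisfying the stated integral formula.

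Third, once $\mu_{G/H}$ has been produced, $G$-invariance follows from the identity $(L_y f)^H = f^H \circ L_{y^{-1}}$ combined with left invariance of $\mu_G$, and uniqueness is the standard uniqueness of left-invariant Radon measures on the transitive $G$-space $G/H$. The analogue for $H \backslash G$ is obtained by rerunning the whole construction with $f(hx)$ in place of $f(xh)$, using right invariance of $\mu_G$ and $\mu_H$ (both granted by unimodularity). The main obstacle I expect is the well-definedness step in the second stage: in the general non-unimodular setting it fails and one recovers only a quasi-invariant measure with a nontrivial rho-function, so the clean cancellation used here genuinely depends on having unimodularity of both $G$ and $H$ and must be set up to respect Fubini (which is available here thanks to compact supports).
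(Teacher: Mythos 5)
Your proposal is correct: it is the standard Riesz--representation proof of Weil's quotient integral formula, with the key well-definedness step (showing $f^H\equiv 0$ forces $\int_G f\,d\mu_G=0$ via Fubini and the substitutions $x\mapsto xh^{-1}$, $h\mapsto h^{-1}$) correctly identified as the point where unimodularity of both $G$ and $H$ enters. The paper does not prove this statement at all --- it is recorded as a Fact and cited from the harmonic analysis literature --- and your argument is essentially the proof given in the standard references, so there is nothing to compare beyond noting that your sketch faithfully reconstructs it.
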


  We can extend Fact~\ref{fact: QuotientIF} to measurable functions on $G$, but the function $f^H$ in the statement can be only be defined and is $\mu_{G/H}$-measurable $\mu_G$-almost everywhere. So, in particular, this problem applies to indicator function $\e_A$ of a measurable set $A$. This causes problem in our later proof and prompts us to sometimes restrict our attention to a better behaved subcollection of measurable subsets of $G$. We say that a subset of $G$ is {\bf $\sigma$-compact} if it is a countable union of compact subsets of $G$. 
 \begin{lemma} We have the following:
 \label{lem: mesurability}
\begin{enumerate}[\rm (i)]
    \item $\sigma$-compact sets are measurable.
    \item the collection of $\sigma$-compact sets is closed under taking countable union, taking finite intersection, and taking product set.
    \item For all $\mu_G$-measurable $A$, we can find a $\sigma$-compact subset $A'$ of $A$ such that $\mu_G(A') =\mu_G(A)$.
    \item Suppose $G$ is unimodular, $H$ is a closed subgroup of $G$ with a left Haar measure $\mu_H$, $A \subseteq G$ is $\sigma$-compact, and $\e_A$ is the indicator function of $A$. Then
 $aH \mapsto \mu_H(A \cap aH)$
defines a measurable function $\e^{H}_A: G/H \to \RR$. If $H$ is unimodular and, $\mu_{G/H}$ is the Radon measure given in Fact~\ref{fact: QuotientIF}, then  $$ \mu_G(A)=  \int_{G/H} \int_H \mu_H(A \cap aH) \d\mu_H(h) \d\mu_{G /H}(xH). $$
A similar statement applies replacing the left homogeneous space $G/H$ with the right homogeneous space $H \backslash G$.
\end{enumerate}

\end{lemma}
\begin{proof}
The verification of (i-iii) is straightforward. We now prove (iv). First consider the case where $A$ is compact. By Baire's Theorem, $\e_A$ is the pointwise limit of a monotone nondecreasing sequence of continuous function of compact support. If $f: G \to \RR$ is a continuous function of compact support, then the function $$f^H: G/H \to R, aH \mapsto \int_{H} f(ax) dx $$ is continuous with compact support, and hence measurable; see, for example, \cite[Lemma~1.5.1]{Harmonicanalysis}.  Noting that $\mu_H(A \cap aH) = \int_H \e_A(ax) dx$, and applying monotone convergence theorem, we get that $\e^H_A$
is the pointwise limit of a monotone nondecreasing sequence of continuous function of compact support. Using monotone convergence theorem again, we get $\e^H_A$ is integrable, and hence measurable. Also, by monotone convergence theorem, we get the quotient integral formula in the statement. 

Finally, the general case where $A$ is only $\sigma$-compact can be handled similarly, noting that $\e_A$ is then the pointwise limit of a monotone nondecreasing sequence of indicator functions of compact sets.
\end{proof}

  Suppose $H$ is a closed subgroup of $G$. Then $H$ is locally compact, but not necessarily unimodular. We use the following fact in order to apply induction arguments in the later proofs. 

\begin{fact}\label{fact: unimodular}
Let $G$ be a unimodular group. If $H$ is a closed normal subgroup of $G$, then $H$ is unimodular. Moreover, if $H$ is compact, then $G/H$ is unimodular.  
\end{fact}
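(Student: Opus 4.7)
The plan is to introduce a \emph{conjugation modulus} $\delta:G\to\RR^{>0}$ that records how inner automorphisms of $G$ distort the Haar measure of $H$, and to show it simultaneously encodes both $\Delta_H$ and $\Delta_{G/H}$. Because $H$ is normal, for each $g\in G$ the conjugation $\alpha_g:H\to H$, $h\mapsto ghg^{-1}$, is a topological group automorphism, so $(\alpha_g)_*\mu_H=\delta(g)\mu_H$ for a uniquely determined $\delta(g)>0$, and $\delta$ is a continuous homomorphism. Two computations are key: for $h_0\in H$, $(\alpha_{h_0})_*\mu_H(E)=\mu_H(h_0^{-1}Eh_0)=\mu_H(Eh_0)=\Delta_H(h_0)\mu_H(E)$, giving $\delta(h_0)=\Delta_H(h_0)$; and when $H$ is compact with $\mu_H(H)=1$, normality yields $\delta(g)=(\alpha_g)_*\mu_H(H)=\mu_H(g^{-1}Hg)=1$, so $\delta\equiv1$ on $G$.

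Next I would build an auxiliary left $G$-invariant functional on $C_c(G)$ using the Haar measure of the locally compact quotient group $G/H$. For $f\in C_c(G)$ define $f^H(x):=\int_H f(xh)\,d\mu_H(h)$; by left-invariance of $\mu_H$ this is right $H$-invariant and descends to an element of $C_c(G/H)$. Set $I(f):=\int_{G/H}f^H\,d\mu_{G/H}$, where $\mu_{G/H}$ is any left Haar measure on the topological group $G/H$. Since $(L_{g_0}f)^H=L_{g_0H}f^H$ and $\mu_{G/H}$ is left-invariant, $I$ is a nonzero positive linear left-$G$-invariant functional on $C_c(G)$, and by uniqueness of left Haar measure $I=c\int_G\cdot\,d\mu_G$ for some $c>0$.

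The core calculation compares $I(R_{g_0}f)$ with $I(f)$, where $R_{g_0}f(y)=f(yg_0)$. Writing $xhg_0=xg_0\cdot\alpha_{g_0^{-1}}(h)$ and changing variables via $\alpha_{g_0}$ on $H$ gives
\[
(R_{g_0}f)^H(x)=\delta(g_0)^{-1}f^H(xg_0).
\]
Normality of $H$ identifies $x\mapsto f^H(xg_0)$ with the right translate of $f^H$ by the coset $g_0H$ on $G/H$, so integrating against $\mu_{G/H}$ introduces a factor of $\Delta_{G/H}(g_0H)^{-1}$, yielding $I(R_{g_0}f)=\delta(g_0)^{-1}\Delta_{G/H}(g_0H)^{-1}I(f)$. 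On the other hand, unimodularity of $G$ gives $I(R_{g_0}f)=c\int_G R_{g_0}f\,d\mu_G=I(f)$, and comparing forces the identity
\[
\delta(g)\,\Delta_{G/H}(gH)=1\qquad\text{for every }g\in G.
\]

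Both conclusions now follow immediately. For $g=h_0\in H$ the coset $h_0H$ is the identity of $G/H$, so $\Delta_{G/H}(h_0H)=1$; together with $\delta(h_0)=\Delta_H(h_0)$ this forces $\Delta_H\equiv 1$, proving $H$ is unimodular. If in addition $H$ is compact, the second computation above forces $\delta\equiv1$ on $G$, whence the identity forces $\Delta_{G/H}\equiv 1$, so $G/H$ is unimodular. The main obstacle I anticipate is the change-of-variable step producing $(R_{g_0}f)^H(x)=\delta(g_0)^{-1}f^H(xg_0)$, since it is there that the interaction between right translation on $G$ by a \emph{general} element $g_0$ and left-Haar integration over the normal subgroup $H$ has to be tracked carefully; once that identity is in hand, the rest is routine bookkeeping with the uniqueness of Haar measure.
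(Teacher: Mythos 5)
The paper states this result as a \emph{Fact} and gives no proof of its own (it is quoted as standard from the harmonic analysis literature), so there is nothing to compare against; your argument is the standard textbook proof and it is correct and complete. Every step checks out: $\delta(h_0)=\Delta_H(h_0)$ follows from left-invariance of $\mu_H$, $\delta\equiv 1$ for compact $H$ follows from normality and $\mu_H(H)<\infty$, the change of variables $(R_{g_0}f)^H(x)=\delta(g_0)^{-1}f^H(xg_0)$ is handled correctly, and comparing the two evaluations of $I(R_{g_0}f)$ via uniqueness of left Haar measure yields the identity $\delta(g)\,\Delta_{G/H}(gH)=1$, from which both conclusions follow exactly as you say.
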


  \subsection{More on Kemperman's inequality and the inverse problems} We will need a version of Kemperman's inequality for arbitrary sets.   Recall that the \emph{inner Haar measure} $\mut_{G}$ associated to $\mu_G$ is given by
  $$ \mut_G(A) = \sup \{ \mu_G(K) : K \subseteq A  \text{ is compact}.  \}$$
  The following is well known and can be easily verified:
 \begin{fact} \label{fact: Innermeasure}
 Suppose $\mut_G$ is the inner Haar measure associated to $\mu_G$. Then we have the following:
 \begin{enumerate}[\rm (i)]
     \item \emph{(Agreement with $\mu_G$)} If $A$ is measurable, then $\mut_G(A)=\mu(A)$. 
     \item \emph{(Inner regularity)} There is $\sigma$-compact $A' \subseteq A$ such that $$\mut_G(A)=\mut_G(A') = \mu_G(A).$$
     \item \emph{(Superadditivity)} If $A$ and $B$ are disjoint, then 
     $$\mut_G(A\cup B) \geq \mut_G(A)+\mut_G(B).$$
     \item \emph{(Left invariance)} For all $g\in G$, $\mut_G(gA)=\mut(A)$.
     \item \emph{(Right invariance)} If $G$ is unimodular, then for all $g\in G$, $\mut_G(Ag)=\mut(A)$.
 \end{enumerate}
 \end{fact}

It is easy to see that we can replace the assumption that $A$ and $B$ are compact in Kemperman's inequality in the introduction with the weaker assumption that $A$ and $B$ are $\sigma$-compact. Together with the inner regularity of $\mut_G$ (Fact~\ref{fact: Innermeasure}.2), this give us the first part of the following Fact~\ref{fact: GeneralKemperman}. The second part of Fact~\ref{fact: GeneralKemperman} follows from the fact that taking product sets preserves compactness, $\sigma$-compactness, and analyticity. Note that taking product sets in general does not preserve measurability, so we still need inner measure in this case.

\begin{fact}[Generalized Kemperman inequality for connected groups] \label{fact: GeneralKemperman} 
Suppose $\widetilde{\mu}_G$ is the inner Haar measure on $G$, and $A, B \subseteq G$ are nonempty. Then
$$ \widetilde{\mu}_G(AB) \geq \min\{ \mut_G(A)+ \mut_G(B), \mut_G(G) \}. $$
Moreover, if $A$ and $B$ are compact, $\sigma$-compact, or analytic, then we can replace $\widetilde{\mu}_G$ with $\mu_G$.
\end{fact}

 The remaining parts of Theorem~\ref{thm:mainequal} consist of classifying the minimally expanding pairs $(A,B)$ and show that they match the description in situations (iii) and (iv) of Theorem~\ref{thm:mainequal}.  For compact group, our strategy is to reduce the problem to the known situations of one dimensional tori. Hence, we need the following special case of Kneser's classification result, and the sharp dependence between $\varepsilon$ and $\delta$ is essentially due to Bilu~\cite{Bilu}.

\begin{fact}[Inverse theorem for $\TT^d$]\label{fact:inverse theorem torus}
Let $A,B$ be compact subsets of $\TT^d$. For every $\tau>0$, there is a constant $c=c(\tau)$ such that if
\[
\tau^{-1}\mu_{\TT^d}(A)\leq \mu_{\TT^d}(B)\leq \mu_{\TT^d}(A)\leq c,
\]
then either $\mu_{\TT^d}(A+B)\geq \mu_{\TT^d}(A)+2\mu_{\TT^d}(B)$, or there are compact intervals $I,J$ in $\TT$ with $\mu_\TT(I)=\mu_{\TT^d}(A+B)-\mu_{\TT^d}(B)$ and $\mu_\TT(J)=\mu_{\TT^d}(A+B)-\mu_{\TT^d}(A)$, and a continuous surjective group homomorphism $\chi:\TT^d\to\TT$, such that $A\subseteq \chi^{-1}(I)$ and $B\subseteq \chi^{-1}(J)$.
\end{fact}

When the group is a one dimensional torus $\TT$, a sharper result is recently obtained by Candela and de Roton~\cite{circle19}. The constant $c_G=3.1\cdot 10^{-1549}$ in the following fact is from an earlier result in $\ZZ/p\ZZ$ by Grynkiewicz~\cite{Grynkiewicz}. 

\begin{fact}[Inverse theorem for $\TT$]\label{fact: new inverse theorem torus}
There is a constant $c_G>0$ such that the following holds. If $A,B$ are compact subsets of $\TT$, with $\dis_\TT(A,B)<\min\{\mu_\TT(A),\mu_\TT(B),c_G\}$, and $1-\mu_\TT(A)-\mu_\TT(B)\geq 2\dis_\TT(A,B)$. Then there is a continuous surjective group homomorphism $\chi:\TT\to\TT$, and two compact intervals $I,J\subseteq \TT$ such that 
\[
\mu_\TT(I)\leq \mu_\TT(A)+\dis_G(A,B),\quad \mu_\TT(J)\leq \mu_\TT(B)+\dis_G(A,B),
\]
and $A\subseteq \chi^{-1}(I)$, $B\subseteq \chi^{-1}(J)$. 
\end{fact}

  For noncompact group, we reduce the problem to the known situation of additive group of real numbers. 
The following result can be seen as the stability theorem of the Brunn--Minkowski inequality in $\RR^d$ when $d=1$. 

\begin{fact}[Inverse theorem for $\RR$]\label{fact:Brunnd=1 R}
Let $A,B$ be compact subsets in $\RR$ with $\mu_G(A)\geq\mu_G(B)$, and let $\mu_\RR$ be the Lebesgue measure in $\RR$. Suppose we have
\[
\mu_\RR(A+B)< \mu_\RR(A)+2\mu_\RR(B).
\]
Then there are compact intervals $I,J\subseteq\RR$ with $\mu_\RR(I)=\mu_\RR(A+B)-\mu_\RR(B)$ and $\mu_\RR(J)=\mu_\RR(A+B)-\mu_\RR(A)$, such that $A\subseteq I$ and $B\subseteq J$.
\end{fact}

\section{Reduction to classifying nearly minimally expanding pairs}\label{sec: easy directions of thm 1.1}

  To set the stage for the later discussion, we would like to separate the core part of Theorem~\ref{thm:mainequal} from the more immediate parts. Throughout $G$ is a connected unimodular group, and $\mu_G$ is a Haar measure on $G$.

\begin{proposition} \label{prop:backwardof1.1}
Suppose $A, B\subseteq G$ are nonempty and compact and one of the situation listed in Theorem~\ref{thm:mainequal}
 holds, then $\mu_G(AB)=\min\{\mu_G(A)+\mu_G(B), \mu_G(G)\}.$
\end{proposition}
\begin{proof}
We will only consider situation (v) because (i-iv) are immediate and (vi) can be showed in a same way as (v). Suppose we are in situation (v) of Theorem~\ref{thm:mainequal}.  As $\chi$ is a group homomorphism, we have $AB =\chi^{-1}(I+J)$. Note that by quotient integral formula, we have $\mu_G(A) = \mu_\TT(I)$, $\mu_G(B) = \mu_\TT(J)$, $\mu_G(AB) = \mu_\TT(I+J)$. The desired conclusion follows from the easy that $\mu_\TT(I+J) = \mu_\TT(I)+\mu_\TT(J)$.
\end{proof}

  The following lemma clarifies the second statement in situation (ii) of Theorem~\ref{thm:mainequal}. Note that we do not have the later part of Theorem~\ref{thm:mainequal}(ii) when $A, B$ are not assumed to be compact. For example with $A=B=(0,1/2)+\ZZ$, we have $\mu_\TT(A)+\mu_\TT(B)= 1$, but $AB= (0, 1)+\ZZ$ is not the whole $\TT$.  

\begin{proposition}\label{prop: when a+b>G}
If either $A, B \subseteq G$ are measurable and $\mu_G(A)+\mu_G(B)> \mu_G(G)$ or  $A, B \subseteq G$ are compact and $\mu_G(A)+\mu_G(B)\geq \mu_G(G)$,  then $AB=G$.
\end{proposition}
\begin{proof}
The cases where either $A$ or $B$ is empty are immediate, so we assume that $A,B$ are nonempty.
 Suppose $g$ is an arbitrary element of $G$.  It suffices to show that $A^{-1}g$ and $ B$ has nonempty intersection. As $G$ is unimodular, $\mu_G(A) =\mu_G(A^{-1})$ by Fact~\ref{fact: measureofinverse}. Hence $\mu_G(A^{-1}g)  +\mu_G(B) = \mu_G(G)$. If $\mu_G(A^{-1}g \cap B)>0$, then we are done. Otherwise, we have $\mu_G(A^{-1}g \cap B)=0$, and so $\mu_G(A^{-1}g \cup B) =\mu_G(G)$ by the inclusion-exclusion principle. As $A$ and $B$ are compact, $A^{-1}g \cup B$ is also compact, and the complement of $A^{-1}g \cup B$ is open. Since nonempty open sets has positive measure,  $\mu_G(A^{-1}g \cup B) =\mu_G(G)$ implies $A^{-1}g \cup B = G$. Now, since $G$ is connected, we must have $A^{-1}g \cap B$ must be nonempty.
\end{proof}

The following corollary will be used many times later.

\begin{corollary}\label{cor: when a+b>G}
 If either $A \subseteq G$ is measurable and $n\mu_G(A)>\mu_G(G)$ or $A\subseteq G$ is compact and $n\mu_G(A)>\mu_G(G)$, then $A^n=G$. 
\end{corollary}

\begin{proof}
By replacing $A$ with a subset with the same measure if necessary, we can assume that $A$ is $\sigma$-compact. Then by the generalized Kemperman inequality (Fact~\ref{fact: GeneralKemperman}), we get $ \mu_G(A^{n-1})+\mu_G(A) \geq \mu_G(G)$. Applying Proposition~\ref{prop: when a+b>G}, we get the desired conclusion.
\end{proof}

  Now we clarify the situation in (iii) of Theorem~\ref{thm:mainequal}, situation (iv) can be proven in the same way.

\begin{proposition}

Suppose $A, B\subseteq G$ are nonempty, compact, and with $\mu_G(A)=0$, $0<\mu_G(B)< \mu_G(G)$, and $$\mu_G(AB)=\min(\mu_G(A)+\mu_G(B), \mu_G(G)).$$
Then there is a compact subgroup $H$ of $G$ such that $A \subseteq gH$ for some $g\in G$, and $B= B_1 \cup B_2$ with $HB_1=B_1$ and $\mu_G(AB_2)=0$. 
\end{proposition}

\begin{proof}
Without loss of generality, we can assume that $A$ and $B$ both contain $\text{id}_G$. Let $H =\Stab^0_G(B)$, let $B_1$ be the set of $b\in B$ such that whenever $U$ is an open neighborhood of $b$, we have $\mu_G(U\cap B) >0$, and let $B_2= B \setminus B_1$. We will now verify that $H$, $B_1$, and $B_2$ are as desired.

We make a number of immediate observations.  As $\mu_G(AB)= \mu_G(B)$ and $\text{id}_G$ is in $A$, we must have $A \subseteq H$. Note that $B_2$ consists of $b\in B$ such that there is open neighborhood $U$ of $B$ with $\mu_G(U \cap B)=0$. So $B_2$ is open in $B$. Moreover, if $K$ is a compact subset of $B_2$, then $K$ has a finite cover $(U)_{i=1}^n$ such that $\mu_G(U_i \cap B)=0$, which implies $\mu_G(K)=0$. It follows from inner regularity of $\mu_G$ (Fact~\ref{fact: Haarmeasurenew}(iv)), that $\mu_G(B_2)=0$. Hence, $B_1$ is a closed subset of $B$ with $\mu_G(B_1)=\mu_G(B)$. Since $B$ is compact, $B_1$ is also compact.
As $H$ is a closed subset of $G$, the compactness of $H$ follows immediately if we can show that $HB_1=HB$

It remains to verify that $HB_1=B_1$.
 As $\mu_G$ is both left and right translation invariant, we also have that for all $g\in G$, if $U$ is an open neighborhood of $gb \in gB_1$, then $\mu_G(U \cap gB_1)>0.$ 
Suppose $hb$ is in $HB_1 \setminus B_1$ with $h\in H$. Set $U = G \setminus B_1$. Then $U$ is an open neighborhood of $hb$. From the earlier discussion, we then have $\mu_G(U \cap hB_1) >0$. This implies that $\mu_G( hB \setminus B) >0$ contradicting the fact that $h \in H= \Stab^0_G$.
\end{proof}

\section{Reduction to sets with small measure}\label{sec: reduction to small}
  Throughout this section, $G$ is a connected \emph{compact} group, $\mu_G$ is the normalized Haar measure on $G$, and $A,B \subseteq G$ are $\sigma$-compact sets with positive measure.  We will show that if $(A,B)$ is nearly minimally expanding in $G$, then we can $\sigma$-compact $A'$ and $B'$ each with smaller measure such that the pair $(A',B')$ is also nearly minimally expanding. The similar approach used in this section is introduced by Tao \cite{T18} and used to obtain an inverse theorem in the abelian setting. We first prove the following easy fact, which will be used several times later in the paper.
  
   Let $f,g: G\to \mathbb C$ be functions. For every $x\in G$, we define the {\bf convolution} of $f$ and $g$ to be
\[
f*g(x)=\int_G f(y)g(y^{-1}x) \d\mu_G(y).
\]
Note that $f*g$ is not commutative, but associative by Fubini's Theorem. 
  
\begin{lemma}\label{lem:trans}
Let $t$ be any real numbers such that $\mu_G(A)^2\leq t\leq \mu_G(A)$. Then there are $x,y\in G$ such that $\mu_G(A\cap(xA))=\mu_G(A\cap(Ay))=t$.
\end{lemma}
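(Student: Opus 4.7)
The plan is to prove the existence of $x$ via an intermediate value theorem argument applied to the map $f: G \to \RR$, $f(x) = \mu_G(A \cap xA)$, and then obtain $y$ by the symmetric argument on the right. The two key inputs are continuity of $f$ and computation of its average over $G$.

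First I would record that $f$ is continuous. This is immediate from Fact~\ref{fact: Haarmeasure}(5), since $\mu_G(xA) = \mu_G(A)$ (left-invariance) gives $\mu_G(A \triangle xA) = 2\bigl(\mu_G(A) - \mu_G(A \cap xA)\bigr)$, so $f(x) = \mu_G(A) - \tfrac{1}{2}\mu_G(A \triangle xA)$. Note also that $f(\id) = \mu_G(A)$.

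Next, I would compute $\int_G f(x)\,\d\mu_G(x)$. Writing $f(x) = \int_G \e_A(z)\e_A(x^{-1}z)\,\d\mu_G(z)$, Fubini's theorem lets me swap the integrals; for fixed $z$, the substitution $x \mapsto zx^{-1}$ together with the unimodularity of $G$ (Fact~\ref{fact: measureofinverse}) gives $\int_G \e_A(x^{-1}z)\,\d\mu_G(x) = \mu_G(A)$. Hence the average of $f$ on $G$ equals $\mu_G(A)^2$. In particular there exists $x_0 \in G$ with $f(x_0) \leq \mu_G(A)^2$.

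Now I would invoke the intermediate value theorem: since $G$ is connected and $f$ is continuous, $f(G)$ is a connected subset of $\RR$, i.e.\ an interval. It contains both $\mu_G(A) = f(\id)$ and some value $\leq \mu_G(A)^2$, so it contains the whole interval $[\mu_G(A)^2,\mu_G(A)]$. Any $t$ in this range is therefore realised by some $x$. The existence of $y$ with $\mu_G(A \cap Ay) = t$ follows verbatim after replacing $f$ with $h(y) = \mu_G(A \cap Ay)$; continuity again uses Fact~\ref{fact: Haarmeasure}(5) (applied with right translation, which is valid because $G$ is unimodular, so right translation also preserves $\mu_G$), and the average calculation is identical. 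There is no real obstacle here; the only mild point to double-check is that the Fubini swap and the substitution are legitimate for the $\sigma$-compact set $A$, which is ensured since $\e_A$ is a nonnegative measurable function.
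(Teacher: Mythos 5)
Your proposal is correct and follows essentially the same route as the paper: the paper also observes that $x\mapsto\mu_G(A\cap xA)=\e_A*\e_{A^{-1}}(x)$ and $y\mapsto\mu_G(A\cap Ay)=\e_{A^{-1}}*\e_A(y)$ are continuous, equal $\mu_G(A)$ at the identity, and have average $\mu_G(A)^2$ by Fubini, then concludes by connectedness and the intermediate value theorem. Your write-up just makes the convolution identity and the average computation explicit.
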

\begin{proof}
Consider the maps: 
$$\pi_1: x\mapsto \e_A*\e_{A^{-1}}(x)=\mu_G(A\cap (xA)), \text{ and } \pi_2:y\mapsto \e_{A^{-1}}*\e_{A}(y)=\mu_G(A\cap (Ay)).$$  
By Fact~\ref{fact: Haarmeasurenew}, both $\pi_1$ and $\pi_2$ are continuous functions, and equals to $\mu_G(A)$ when $x=y=\text{id}_G$. By Fubini's theorem,
\[
\mathbb E \, (\e_A*\e_{A^{-1}})=\mu_G(A)^2=\mathbb E \, (\e_{A^{-1}}*\e_{A}).
\]
Then the lemma follows from the intermediate value theorem, and the fact that $G$ is connected.
\end{proof}

Recall that $\dis_G(A,B)=\mu_G(AB)-\mu_G(A)-\mu_G(B)$ is the discrepancy of $A$ and $B$ on $G$.  The following property is sometimes referred to as {\bf submodularity} in the literature. Note that this is not related to modular functions in locally compact groups or the notion of modularity in model theory.

\begin{lemma}\label{lem:iep}
Let $\gamma_1,\gamma_2>0$, and $A,B_1,B_2$ are $\sigma$-compact subsets of $G$. 
Suppose that $\dis_G(A,B_1)\leq \gamma_1$, $\dis_G(A,B_2)\leq \gamma_2$, and
\[
\mu_G(B_1\cap B_2)>0, \quad \text{ and }\ \mu_G(A)+\mu_G(B_1\cup B_2)\leq 1.
\]
Then both $\dis_G(A,B_1\cap B_2)$ and $\dis_G(A,B_1\cup B_2)$ are at most $\gamma_1+\gamma_2$.
\end{lemma}
\begin{proof}

Observe that for every $x\in G$ we have
\begin{equation*}
\e_{AB_1}(x)+\e_{AB_2}(x)\geq \e_{A(B_1\cap B_2)}(x)+\e_{A(B_1\cup B_2)}(x),
\end{equation*}
which implies 
\begin{equation}\label{eq:observation}
\mu_G(AB_1)+\mu_G(AB_2)\geq\mu_G(A(B_1\cap B_2))+\mu_G(A(B_1\cup B_2)).
\end{equation}
By the fact that $\dis_G(A,B_1)\leq \gamma_1$ and $\dis_G(A,B_2)\leq \gamma_2$, we obtain
\begin{align*}
    \mu_G(AB_1)\leq \mu_G(A)+\mu_G(B_1)+\gamma_1,\text{ and }
     \mu_G(AB_2)\leq \mu_G(A)+\mu_G(B_2)+\gamma_2.
\end{align*}
Therefore, by equation (\ref{eq:observation}) we have
\begin{align*}
    &\mu_G(A(B_1\cap B_2))+\mu_G(A(B_1\cup B_2))\\
    \leq&\, 2\mu_G(A)+\mu_G(B_1\cap B_2)+\mu_G(B_1\cup B_2)+\gamma_1+\gamma_2.
\end{align*}
On the other hand, as $\mu_G(B_1\cap B_2)>0$ and $\mu_G(A)+\mu_G(B_1\cup B_2)\leq1$, and using Kemperman's inequality, we have
\[
\mu_G(A(B_1\cap B_2))\geq \mu_G(A)+\mu_G(B_1\cap B_2),
\]
and 
\[
\mu_G(A(B_1\cup B_2))\geq \mu_G(A)+\mu_G(B_1\cup B_2).
\]
This implies
\[
\mu_G(A(B_1\cap B_2))\leq \mu_G(A)+\mu_G(B_1\cap B_2)+\gamma_1+\gamma_2,
\]
and 
\[
\mu_G(A(B_1\cup B_2))\leq \mu_G(A)+\mu_G(B_1\cup B_2)+\gamma_1+\gamma_2.
\]
Thus we have $\dis_G(A,B_1\cap B_2), \dis_G(A,B_1\cup B_2)\leq \gamma_1+\gamma_2$. 
\end{proof}

 The following lemma is the main result of this section, it says if $G$ admits a small expansion pair, one can another find pair of sets with sufficiently small measures, and still has small expansion. 

\begin{lemma}\label{lem:sml}
Let $d_1,d_2\in (0,1/4)$ be positive real numbers, and let $$0<\lambda=\min\{\mu_G(A),\mu_G(B),1-\mu_G(A)-\mu_G(B)\}.$$ Suppose $\dis_G(A,B)\leq\gamma$. Then there are $\sigma$-compact sets $A', B'\subseteq G$ satisfying 
\begin{enumerate}[\rm (i)]
    \item $\mu_G(A')=d_1$ and $\mu_G(B')=d_2$,
    \item $\dis_G(A',B)$, $\dis_G(A,B')$, and $\dis_G(A',B')$ are  $O_{d_1,d_2}(\gamma/m)$.
\end{enumerate}
\end{lemma}
\begin{proof}
We assume $\mu_G(A)>d_1$ and $\mu_G(A)\geq\mu_G(B)$. The case when $\mu_G(A)$ is less than $d_1$ can be proven in a similar way by replacing taking intersections by taking unions. For every $g\in G$, both $\dis_G(gA,B)$ and $\dis_G(A,Bg)$ are still upper bounded by $\gamma$.  
By Lemma~\ref{lem:trans}, for every $t$ with $\mu_G(A)^2\leq t\leq\mu_G(A)$, there is $g\in G$ such that $\mu_G(A\cap gA)=t$.
Assuming that in each step, we can choose $g$ such that $\mu_G(A\cap gA)=\mu_G(A)^2$, and replace $A$ by $A\cap gA$. Hence, after $O(\log\log 1/d_1)$ steps, the measure of $A$ will achieve $d_1$. The issue of this simple argument is that we may have $\mu_G(A\cup gA)+\mu_G(B)>1$ when $\mu_G(A)\geq 1/3$, so that we cannot apply Lemma~\ref{lem:iep}. Thus, in the first few steps, we will choose $g$ such that $\mu_G(A\cup gA)$ is not too large. 

We first consider the case when $\mu_G(A)\geq 1/3$, and $\mu_G(A)-\lambda\geq\mu_G(A)^2$. We are going to choose $g\in G$ such that
\begin{equation}\label{eq: condition for union}
2\mu_G(A)-\mu_G(A\cap gA)+\mu_G(B)=\mu_G(A\cup gA)+\mu_G(B)\leq 1,
\end{equation}
and $\mu_G(A\cap gA)\geq \max\{d_1,\mu_G(A)-\lambda\}$. Such $g$ exists by Lemma~\ref{lem:trans}. Let $A_1=A\cap gA$, then $\mu_G(A_1)\leq \mu_G(A)-\lambda,\mu_G(A)^2$. By Lemma~\ref{lem:iep}, $\dis_G(A_1,B)\leq 2\gamma$. Next we choose $g_1\in G$ satisfying \eqref{eq: condition for union} with $A$ replaced by $A_1$, and  $\mu_G(A_1\cap g_1A_1)\geq \min\{d_1,\mu_G(A_1)^2\}$. Let $A_2=A_1\cap g_1A_1$, then 
\[
\mu_G(A_2)\leq \max\{\mu_G(A_1)-2\lambda,\mu_G(A_1)^2\},
\]
and $\dis_G(A_2,B)\leq 4\gamma$. Repeat this procedure for $t_1$ steps until either $\mu_G(A_{t_1})=d_1$, or $\mu_G(A_t)-2^{t-1}\lambda\leq \mu_G(A_t)^2$. In either case, we have $t_1\leq \log (1/3\lambda)$. 

Next, if $\mu_G(A_{t_1})>d_1$, we choose $g_{t_1}$ in $G$ such that $\mu_G(A_{t_1}\cap g_{t_1}A_{t_1})=\mu_G(A_{t_1})^2$. By the way we define $t_1$, we have $\mu_G(A_{t_1}\cup g_{t_1}A_{t_1})+\mu_G(B)\leq 1$. Set $A_{t_1+1}=A_{t_1}\cap g_{t_1}A_{t_1}$. Repeat this procedure for $t_2$ steps until $\mu_G(A_{t_1+t_2})=d_1$. We have
\[
t_2\leq \log\frac{\log d_1}{\log \mu_G(A_{t_1})}\leq \log\log \frac{1}{d_1}, 
\]
and $\dis_G(A_{t_1+t_2},B)\leq 2^{t_1+t_2}\gamma =O_{d_1}(\gamma/\lambda)$. 
We then apply the same procedures for $B$ to arrange $B$ having measure $d_2$.

If we have $\mu_G(A)<1/3$ at the beginning, we are able to choose $g$ such that $\mu_G(A\cap gA)=\mu_G(A)^2$ and $\mu_G(A\cup gA)+\mu_G(B)\leq 1$. Hence, it only requires at most $\log\log (1/d_1)$ steps to make $A$ having measure $d_1$. 
\end{proof}

\section{Geometry of minimal and nearly minimal expansions I}\label{sec: geometry I}
This section studies the shape of a nearly minimally expanding pairs\ relative to a connected compact normal subgroup of the ambient topological group such that the images of the pair under the quotient map have small measure.
In Section~\ref{sec: 6.1}, we obtain results that will allow us to reduce the  Theorem~\ref{thm:mainequal} and Theorem~\ref{thm:mainapproximate} to analogous result about a simpler quotient group.  Section~\ref{sec: 6.2} applies Section~\ref{sec: 6.1} to reduce Theorem~\ref{thm:mainequal} and Theorem~\ref{thm:mainapproximate} to the case of Lie groups and also prove a coarse version of these results. Section~\ref{sec: 6.3} applies Section~\ref{sec: 6.1} to further reduce Theorem~\ref{thm:mainequal} and Theorem~\ref{thm:mainapproximate} to the problem of constructing suitable group homomorphism onto either $\TT$ or $\RR$.

Throughout this section, $G$ is a connected unimodular locally compact group with Haar measure $\mu_G$, and $A$ and $B$ are $\sigma$-compact subsets of $G$ with positive $\mu_G$-measure. We will assume familiarity with the preliminary Section~\ref{sec: 3.1} on locally compact group and Haar measure.

\subsection{Preservation of minimal expansion under quotient}\label{sec: 6.1}
In this section, $H$ is a \emph{connected compact} normal subgroup of $G$, 
so $H$ and $G/H$ are unimodular by Fact~\ref{fact: unimodular}. Let $\mu_H$, and $\mu_{G/H}$ be the Haar measure on $G$, $H$, and $G/H$, and let $\mut_G$ and $\mut_{G/H}$ be the inner Haar measures on $G$ and $G/H$. 

Suppose $r$ and $s$ are in $\RR$. We set
$$  A_{(r,s]}  := \{a \in A: \mu_H(A \cap aH) \in (r,s]  \}  $$
and
$$ \pi A_{(r,s]}:=  \{aH \in G \slash H : \mu_H( A \cap aH) \in (r,s]   \}. $$ 
In particular, $\pi A_{(r,s]}$ is the image of $A_{(r,s]}$ under the map $\pi$. We define $B_{(r',s']}$ and $\pi B_{(r',s']}$ likewise for $r', s' \in \RR$.
We have a number of immediate observations.  

\begin{lemma} \label{lem: corofmeasurability}
Let $r, s, r', s'$ be in $\RR^{>0}$. For all $aH \in \pi A_{(r,s]}$, $bH \in \pi B_{(r',s']}$, the sets $A_{(r,s]} \cap aH$, $B_{(r',s']} \cap bH$ are nonempty $\sigma$-compact.
For all subintervals $(r,s]$ of $(0,1]$,    $A_{(r,s]}$  is $\mu_G$-measurable and $\pi A_{(r,s]}$ is $\mu_{G/H}$-measurable.
\end{lemma}
\begin{proof}
The first assertion is immediate from the definition.
Let $\e_A$ be the indicator function of $A$. Then the function 
\begin{align*}
    \e^H_A: G/H &\to R\\
    aH &\mapsto \mu_H(A \cap aH)
\end{align*}
is well-defined and measurable by Lemma~\ref{lem: mesurability}. As $ \pi A_{(r,s]} = (\e^H_A)^{-1}(r, s]$ and 
$$A_{(r,s]} = A \cap \pi^{-1}( \pi A_{(r,s]}),$$ we get the second assertion.
\end{proof}

Note that $\pi A_{(r,s]}\pi B_{(r',s']}$ is not necessarily $\mu_{G/H}$-measurable, so Lemma~\ref{lem: intuition}(ii) does require the inner measure $\mut_{G/H}$.

\begin{lemma} \label{lem: intuition}
We have the following:
\begin{enumerate}[\rm (i)]
      \item For every $aH \in \pi A$ and $bH \in \pi B$,
    $$ \mu_H \big((A \cap aH)( B \cap bH) \big) \geq \min\{ \mu_H( A \cap aH)+ \mu_H(  B \cap bH), 1\}. $$
    \item  If $A_{(r,s]}$ and $B_{(r',s']}$ are nonempty, then 
        $$\mut_{G/H}(\pi A_{(r,s]} \pi B_{(r',s']}) \geq \min\{ \mu_{G/H}(\pi A_{(r,s]}) + \mu_{G/H}(\pi B_{(r',s']}), \mu_{G/H}(G/H)\}.$$
\end{enumerate}
\begin{proof}
Note that both $H$ and $G/H$ are connected. So (i) is a consequence of the generalized Kemperman inequality for $H$ (Fact~\ref{fact: GeneralKemperman}) and (ii) is a consequence of the generalized Kemperman inequality for $G/H$ (Fact~\ref{fact: GeneralKemperman}). 
\end{proof}
    
\end{lemma}

As the functions we are dealing with are not differentiable,  we will need Riemann--Stieltjes integral which we will now recall. Consider a closed interval $[a,b]$ of $\RR$, and functions $f:[a,b] \to \RR$ and $g: \RR \to \RR$. A {\bf partition} $P$ of $[a,b]$ is a sequence $(x_i)_{i=0}^n$ of real numbers with $x_0 =a$, $x_n=b$, and $x_i< x_{i+1}$ for $i \in \{0, \ldots, n-1\}$. For such $P$, its {\bf norm} $\Vert P\Vert$ is defined as $\max_{i=0}^{n-1}|x_{i+1}-x_i|$, and a corresponding {\bf partial sum} is given by $S(P,f, g) = \sum_{i=0}^n f(c_{i+1})(g(x_{i+1})-g(x_i))$ with $c_{i+1} \in [x_i, x_i+1]$.
We then define
$$ \int_{a}^b f(x)\d{g(x)}: = \lim_{\Vert P \Vert \to 0}  S(P,f,g)$$
if this limit exists where we let $P$ range over all the partition of $[a,b]$ and $S(P,f, g)$ ranges over all the corresponding partial sums of $P$. The next fact records some basic properties of the integral.
\begin{fact}\label{fact: RS int}
Let $[a,b]$, $f(x)$,  and $g(x)$ be as above. Then we have:
\begin{enumerate}[\rm (i)]
    \item{\rm (Integrability)} If $f(x)$ is continuous on $[a,b]$, and $g(x)$ is monotone and bounded on $[a,b]$, then $f(x)\d g(x)$ is  Riemann--Stieltjes integrable on $[a,b]$.
    \item{\rm (Integration by parts)} If $f(x) \d g(x)$ is Riemann--Stieltjes integrable on $[a,b]$, then $g(x) \d f(x)$ is also Riemann--Stieltjes integrable on $[a,b]$, and
    \[
\int_a^b f(x)\d g(x)=f(b)g(b)-f(a)g(a)-\int_a^b g(x)\d f(x).
\]
\end{enumerate}
\end{fact}

   The next lemma uses ``spillover'' estimate, which gives us a lower bound estimate on $\mu_G(AB)$ when the projection of $A$ and $B$ are not too large. 

\begin{lemma} \label{lem: Keyestimatequotientcompact}
Suppose $\mu_{G\slash H}( \pi A )+ \mu_{G \slash H} (\pi B) <1$. Set $\alpha =\sup_{a \in A} \mu_H( A \cap aH) $, $\beta =\sup_{b \in B} \mu_H( B \cap bH)$, and $\gamma =\max\{1, \alpha+\beta\}$. Then
\begin{align*}
    \mu_G(AB)  &\geq   \frac{\alpha+\beta}{\gamma}\left( \mu_{G/H}(\pi A_{(\alpha/\gamma,\alpha]}) + \mu_{G/H}(\pi B_{(\beta/\gamma,\beta]})\right)\\
    &\quad + \frac{\alpha+\beta}{\alpha} \mu_G(A_{(0,\alpha/\gamma]}) + \frac{\alpha+\beta}{\beta}\mu_G(B_{(0,\beta/\gamma])}.
\end{align*}
\end{lemma}
\begin{proof}
For $x \in (0,1]$, set $C_x = AB\cap \pi^{-1}(\pi A_{(x\alpha, \alpha]}\pi B_{(x\beta, \beta]})$. One first note that 
$$\mu_G(AB) \geq \mut_G(C_0) .$$ By Fact~\ref{fact: RS int}(1), $\mathrm{d}\mut_G( C_x)$ is Riemann--Stieltjes integrable on any closed  subinterval of $[0,1]$. Hence,
\[
    \mut_G( C_0)  = \mut_G ( C_{1/\gamma}) - \int_0^{\tfrac{1}{\gamma}}\d \mut_G(C_x). 
\]
Lemma~\ref{lem: corofmeasurability} and Lemma~\ref{lem: intuition}(1) give us that
\[
\mut_G (  C_{1/\gamma}) \geq \mut_{G/H} (  \pi{A}_{(\alpha/\gamma,\alpha]}   \pi{B}_{(\beta/\gamma,\beta]}). 
\]
Likewise, for $x, y \in \RR^{>0}$ with $x <y\leq 1/\gamma$,  $\mut_G (C_x) - \mut_G(C_y)  $ is at least  
$$r(\alpha+\beta) \left(\mut_{G/H} (  \pi {A}_{(x\alpha,\alpha]}   \pi {B}_{(x\beta,\beta]}) - \mut_{G/H}(\pi {A}_{(y\alpha,\alpha]}   \pi {B}_{(y\beta,\beta]})\right).$$ 

Therefore, 
 \[\mut_G( C_0) \geq \mut_{G/H} (  \pi{A}_{(\alpha/\gamma,\alpha]}   \pi{B}_{(\beta/\gamma,\beta]}) - \int_0^{\tfrac{1}{\gamma}}(\alpha+\beta) x\d \mut_{G/H}( \pi{A}_{(x\alpha, \alpha]} \pi{B}_{(x\beta, \beta]}).
 \]
Using integral by parts (Fact~\ref{fact: RS int}.2), we get
 \[\mut_G( C_0) \geq  \int_0^{\tfrac{1}{\gamma}}\mut_{G/H}( \pi{A}_{(x\alpha, \alpha]} \pi{B}_{(x\beta, \beta]})\d (\alpha+\beta) x.
 \]
Applying Lemma~\ref{lem: intuition}.2 and using the assumption that  $\mu_{G\slash H}( \pi A )+ \mu_{G \slash H} (\pi B) <1$ , we have
\[\mut_G( C_0)  \geq  \int_0^{\tfrac{1}{\gamma}}(\mu_{G/H}( \pi{A}_{(x\alpha, \alpha]}) + \mu_{G/H}( \pi{B}_{(x\beta, \beta]}))\d(\alpha+\beta) x.
 \]
Using integral by parts (Fact~\ref{fact: RS int}.2), we arrive at
\begin{align*}
    \mut_G( C_0)  &\geq    \frac{\alpha+\beta}{\gamma}\left(\mu_{G/H}( \pi{A}_{(\alpha/\gamma, \alpha]}) + \mu_{G/H}( \pi{B}_{(\beta/\gamma, \beta]}) \right)  \\
    & \quad  -\int_0^{\tfrac{1}{\gamma}}(\alpha+\beta) x\d( \mu_{G/H}( \pi{A}_{(x\alpha, \alpha]})+ \mu_{G/H}(\pi{B}_{(x\beta, \beta]})).
\end{align*}
As $\d( \mu_{G/H}( \pi{A}_{(x\alpha, \alpha]})+ \mu_{G/H}(\pi{B}_{(x\beta, \beta]}))= -\d( \mu_{G/H}( \pi{A}_{(0, x\alpha]})+ \mu_{G/H}(\pi{B}_{(0, x\beta]}))$,
\begin{align*}
    \mut_G( C_0)  &\geq    \frac{\alpha+\beta}{\gamma}\left(\mu_{G/H}( \pi{A}_{(\alpha/\gamma, \alpha]}) + \mu_{G/H}( \pi{B}_{(\beta/\gamma, \beta]}) \right)  \\
    & \quad  +\int_0^{\tfrac{1}{\gamma}}(\alpha+\beta) x\d( \mu_{G/H}( \pi{A}_{(0, x\alpha]})+ \mu_{G/H}(\pi{B}_{(0,x\beta]})).
\end{align*}  
Finally, recall that 
\[ \int_0^{1/\gamma} x\alpha\d\mu_{G/H}( \pi{A}_{(0, x\alpha]})= \mu_G({A}_{(0, \alpha/\gamma]}) \text{ and } \int_0^{1/\gamma} \beta x\d\mu_{G/H}( \pi{B}_{(0, x\beta]})= \mu_G({B}_{(0, \beta/\gamma]}).\] 
Thus, we arrived at the desired conclusion.
\end{proof}

 The next result in the main result in this subsection. It says if the projections of $A$ and $B$ are not too large, the small expansion properties will be kept in the quotient group.

\begin{theorem}[Quotient domination] \label{thm: criticality transfer}
Suppose $ {\mu}_{G/H}( \pi A) +  {\mu}_{G/H} ( \pi B ) <  \mu_{G/H}(G/H) $ and $\dis_G(A,B)< \min\{\mu_G(A), \mu_G(B)\}$.
Then there are $\sigma$-compact $A', B' \subseteq G/H$ such that
$$  \dis_{G \slash H}(A', B')  < 7\dis_G(A,B)$$
and $\max\{ \mu_{G} (A \tri \pi^{-1} A'), \mu_{G} (B \tri \pi^{-1} B')\} < 3\dis_G(A,B).$
\end{theorem}

\begin{proof}
Let $\alpha$ and $\beta$ be as in Lemma~\ref{lem: Keyestimatequotientcompact}. We first show that $\alpha+\beta\geq 1$. Suppose to the contrary that $\alpha+\beta<1$. Then Lemma~\ref{lem: Keyestimatequotientcompact} gives us
\[
\mu_G(AB) \geq \frac{\alpha+\beta}{\alpha} \mu_G(A) + \frac{\alpha+\beta}{\beta}\mu_G(B)
\]
It follows that $\mu_G(AB)> \mu_G(A)+\mu_G(B)+\min\{ \mu_G(A), \mu_G(B)\}$, a contradiction.

Now we have $\alpha+\beta \geq 1$. Hence, Lemma~\ref{lem: Keyestimatequotientcompact} yields
\begin{align*}
    \mu_G(AB)  &\geq    \mu_{G/H}(\pi A_{(\alpha/(\alpha+\beta),\alpha]}) + \mu_{G/H}(\pi B_{(\beta/(\alpha+\beta),\beta]})\\
    &\quad + \frac{\alpha+\beta}{\alpha} \mu_G(A_{(0,\alpha/\gamma]}) + \frac{\alpha+\beta}{\beta}\mu_G(B_{(0,\beta/(\alpha+\beta)])}.
\end{align*}
Choose $\sigma$-compact $A' \subseteq \pi A_{(\alpha/(\alpha+\beta),\alpha]}$ and $B' \subseteq \pi B_{(\beta/(\alpha+\beta),\beta]})$ $\sigma$-compact such that  \[
\mu_{G/H}(A')= \mu_{G/H}(\pi A_{(\alpha/(\alpha+\beta),\alpha]}) \text{ and } \mu_{G/H}(B') = \mu_{G/H}(\pi B_{(\beta/(\alpha+\beta),\beta]}).\] We will verify that $A'$ and $B'$ satisfy the desired conclusion.

Since $\mu_{G/H}(A') \geq (1/ \alpha) \mu_{G}(A_{(\alpha/(\alpha+\beta),\alpha]})$, $\mu_{G/H}(B') \geq (1/ \beta) \mu_{G}(B_{(\beta/(\alpha+\beta),\beta]})$ and $\alpha+\beta>1$, we have
\[ \mu_G(AB) \geq \frac{1}{\alpha} \mu_G(A)+\frac{1}{\beta}\mu_G(B).  \]
From $\mu_G(AB)- \mu_G(A)-\mu_G(B)= \dis_G(A,B)\leq \min\{\mu_G(A),\mu_G(B)\}$, we deduce that $\alpha, \beta \geq 1/2$.

By our assumption $\mu_G(AB) < \mu_G(A)+ \mu_G(B)+\dis_G(A,B)$. Hence, 
\begin{align*}
    \dis_G(A,B)  &\geq    \mu_{G/H}(A') - \mu_{G}( A_{(\alpha/(\alpha+\beta),\alpha]}) + \mu_{G/H}(B') - \mu_{G}( B_{(\beta/(\alpha+\beta),\beta]}) \\
    &\quad + \frac{\beta}{\alpha} \mu_G(A_{(0,\alpha/\gamma]}) + \frac{\alpha}{\beta}\mu_G(B_{(0,\beta/(\alpha+\beta)])}.
\end{align*}
Therefore, $\mu_{G/H}(A') - \mu_{G}( A_{(\alpha/(\alpha+\beta),\alpha]})$ and $(\beta/\alpha) \mu_G(A_{(0,\alpha/\gamma]})$ are at most $ \dis_G(A,B)$. Noting also that $\beta/\alpha \leq 1/2$, we get $\mu_G( A \tri \pi^{-1}(A') \leq 3\dis_G(A,B)$. A similar argument yield $\mu_G( B \tri \pi^{-1}(B') \leq 3\dis_G(A,B)$.

Finally, note that $ \pi^{-1}\left(A'B'\right)$ is equal to $A_{(\alpha/(\alpha+\beta),\alpha]}B_{(\beta/(\alpha+\beta),\beta]}$, which is a subset of $AB$. Combining with $\mu_G(AB) < \mu_G(A)+ \mu_G(B)+\dis_G(A,B)$, we get
\[
\mu_{G/H}(A'B') \leq  \mu_G(A)+\mu_G(B)+\dis_G(A,B)\leq \mu_{G/H}(A')+\mu_{G/H}(B')+ 7\dis_G(A,B),\]
which completes the proof.
\end{proof}

The next corollary of the proof of Theorem~\ref{thm: criticality transfer} gives a complementary result when without the assumption that $\mu_{G/H}(\pi A)+\mu_{G/H}(\pi B)< \mu_{G/H}(G/H)$.

\begin{corollary}\label{cor:transfer to Lie noncompact}
Suppose $G$ is noncompact and $\dis_G(A,B)=0$. Then there are $\sigma$-compact $A', B' \subseteq G/H$ such that  $\dis_{G/H}(A',B')=0$, $\mu_G(A\tri \pi^{-1}A')=0$, and $\mu_G(B\tri \pi^{-1}B')=0$.
\end{corollary}

\begin{proof}
Choose an increasing sequence $(A_n)$ of compact subsets of $A$ and an increasing sequence $(B_n)$ of compact subsets of $B$ such that $A=\bigcup^\infty_{n=0} A_n$ and $B= \bigcup_{i=0}^\infty B_n $. Then $\lim_{n \to \infty}\dis_G(A_n, B_n) =0$. For each $n$, $A_n$ and $B_n$ are compact, so $\pi A_n$ and $\pi B_n$ are also compact and has finite measure. Let $A_n'$  and $B'_n$ be defined for $A_n$ and $B_n$ as in the proof of Theorem~\ref{thm: criticality transfer}. Then for $n$ sufficiently large, we have
\[
\mu_G( \pi^{-1}A'_n \tri A_n) < 3\dis_G(A_n, B_n) \text{ and } \mu_G(\pi^{-1}B'_n \tri B_n) < 3\dis_G(A_n,B_n)
\]
 and 
\[
\mu_{G/H}(A_n'B_n')<\mu_{G/H}(A_n')+\mu_{G/H}(B_n')+5\dis_G(A_n, B_n). 
\]
Moreover, we can arrange that the sequences $(A'_n)$ and $(B'_n)$ are increasing.
Take $A' = \bigcup_{n=1}^\infty A_n'$ and $B' = \bigcup_{n=1}^\infty B_n'$. By taking $n\to\infty$, we have
$$  \mu_G(\pi^{-1} A' \tri A) =0 \text{ and } \mu_G(\pi^{-1} B' \tri B) =0.$$
and $\dis_{G/H}(A', B')=0$ as desired.
\end{proof}

\subsection{Coarse versions of the main theorems}\label{sec: 6.2}
  For the given $G$, there might be no continuous surjective group homomorphism to either $\TT$ or $\RR$ (e.g. $G = \mathrm{SO}_3(\RR))$. However, the famous theorem below by Gleason~\cite{Gleason} and Yamabe~\cite{Yamabe} allows us to naturally obtain continuous and surjective group homomorphism to a Lie group. Using together with Corollary~\ref{cor:transfer to Lie noncompact}, this allows us to reduce the noncompact case of Theorem~\ref{thm:mainequal} to that of Lie group.
  The connectedness of $H$ is not often stated as part of the result, but can be arranged by replacing $H$ with its identity component.

\begin{fact}[Gleason--Yamabe Theorem]\label{fact:Yamabe}
For any connected locally compact group $G$
and any neighborhood $U$ of the identity in $G$, there is a connected compact normal subgroup $H\subseteq U$ of $G$
such that $G/H$ is a  Lie group.
\end{fact}

 With some further effort, we can also arrange that $\mu_{G/H}(\pi A)+\mu_{G/H}(\pi B)< \mu_{G/H}(G/H)$ as necessary to apply Theorem~\ref{thm: criticality transfer}. However, when $\dis_G(A,B)>0$, we will need a dimension control on the Lie group we obtained from the Gleason--Yamabe Theorem. For that, we need Fact~\ref{fact: thesis}, which can be thought of as a refinement of the Gleason--Yamabe theorem coming from arithmetic combinatorics and model theory.

 Recall that an open and precompact set $S\subseteq G$ is a {\bf $K$-approximate} group if $\id \in S$,  $S^{-1}=S$, and $S^2\subseteq XS$ for some finite set $X$ of cardinality $K$. The next theorem by Tao~\cite{T08} allows us to extract an approximate group from a nearly minimally expanding pair; as stated in~\cite{T08}, this theorem is only applicable when $A,B$ are open, but the proof also goes through without this assumption. 

\begin{fact}[Approximate groups from small expansion]\label{fact: find approximate group}
Suppose $K$ is a constant and $\mu_G(AB)<K\mu_G^{1/2}(A)\mu_G^{1/2}(B)$, then there is an open precompact $O(K^{O(1)})$-approximate group $S$ with $\mu_G(S)= O(K^{O(1)})\mu_G^{1/2}(A)\mu_G^{1/2}(B)$
and a finite set $X$ or cardinality $O(K^{O(1)})$ such that $A\subseteq XS$ and $B\subseteq SX$.
\end{fact}

  The study of continuous approximate groups by Carolino~\cite{thesis} is able to find a  Lie model, and to control the dimension of the Lie model. This can be seen as a finer version of the Gleason--Yamabe theorem.

\begin{fact}[Lie model from approximate groups]\label{fact: thesis}
Suppose $K$ is a constant and $S$ is an open precompact  $K$-approximate group on $G$. Then there is a connected compact normal subgroup $H$ of $G$, such that $H \subseteq S^4$ and $G/H$ is a Lie group of dimension $O_K(1)$.
\end{fact}

Fact~\ref{fact: thesis} can also be deduced from the main theorem in~\cite{MW} and the strong approximate group theory in~\cite{BGT}. The following sketch was explained to us by Arturo Rodriguez Fanlo. We assume the reader is sufficiently familiar with model theory and the results in~\cite{BGT}

\begin{proof}[Sketch of proof of Fact~\ref{fact: thesis}] 
Suppose to the contrary. Take an ultraproduct $(G,S)$ of counterexamples $(G_n,S_n)$. By the main result in~\cite{MW}, using the ultraproduct of the Haar measures, $G^{00}$ is contained in $S^4$, so this ultraproduct has a Lie model $L=G/H$. 

Now, using a result in~\cite{BGT}, namely, the strong approximate groups theory, we can find a definable strong approximate group $S'\subseteq S^4$ such that $O_K(1)$ left-translates of $S'$ cover $S$. By \L o\'s' theorem, for sufficiently large $n$, this gives us strong approximate groups in the factors $S'_n \subseteq S^4_n$ such that $O_K(1)$ left-translates of $S'_n$ cover $S_n$. 

By the theory of strong approximate groups, taking $H_n$ the subgroups of non-escaping elements, we get that $G_n=\langle S'_n\rangle$ and $G_n/H_n$ has dimension $O_K(1)$, a contradiction.
\end{proof}

  The next lemma is the main result in this subsection. Using this, we can pass the problem to connected Lie groups with bounded dimensions.

\begin{lemma}[Lie model from small expansions]\label{lem:gleason2}
If $\mu_G(AB) \leq K \mu^{1/2}_G(A)\mu^{1/2}_G(B)$ and then there is a connected compact subgroup $H$ of $G$ such that  $G/H$ is a Lie group of dimension $O_K(1)$ and, with $\pi: G \to G/H$ the quotient map, $\pi A$ and $\pi B$ have $\mu_G$-measure 
 $O(K^{O(1)})\mu^{1/2}_G(A)\mu^{1/2}_G(B)$. 
\end{lemma}
\begin{proof}
By Fact~\ref{fact: find approximate group}, there is an open $K$-approximate group $S$, with  
\[ 
\mu_G(S)= O(K^{O(1)})\mu_G^{1/2}(A)\mu_G^{1/2}(B)
\]
such that $A$ can be covered by $ O(K^{O(1)})$ right translation of $S$, and $B$ can be covered by $ O(K^{O(1)})$ left translation of $S$. By Fact~\ref{fact: thesis}, there is a closed connected normal subgroup $H$ in $S^4$, such that $G/H$ is a Lie group of dimension at most $O_K(1)$. Let $\pi$ be the quotient map. Since $H \subseteq S^{4}$, we have
\[
\mu_{G/H}(\pi(S)) = \mu_G(SH)\leq\mu_G(S^5) = O(K^{O(1)})\mu_G^{1/2}(A)\mu_G^{1/2}(B).
\]
Note that $\pi(A)$ can
be covered by $O(K^{O(1)})$ right translations of $\pi(S)$, and $\pi(B)$ can
be covered by $O(K^{O(1)})$ left translations of $\pi(S)$.
 Hence, we get the desired conclusion.
\end{proof}

The following proposition tells us that small measure expansion phenomenon can always be reduced to the same phenomenon on a Lie group with small dimension.

\begin{proposition}[Coarse versions of the main theorems] \label{prop: coarsetheorem}
There is a constant $\tau$ such that if either $G$ is noncompact and $\mu_G(A)=\mu_G(B)$ or $G$ is compact and  $\mu_G(A)= \mu_G(B)<\tau$, and  $$\dis_G(A,B)< \min\{\mu_G(A),\mu_G(B)\},$$
then there is a connected compact normal subgroup $H$ of $G$, and $\sigma$-compact subsets $A', B'$ of $G/H $ satisfying:
\begin{enumerate}[\rm (i)]
    \item $G/H$ is a Lie group of dimension $O(1)$;
    \item With $\pi:G \to G/H$ the quotient map,  \[ \mu_G(A \tri\pi^{-1}A')< 3 \dis_G(A,B) \text{ and } \mu_G(B \tri\pi^{-1}B')< 3 \dis_G(A,B);\]
    \item $\dis_{G/H}(A', B') < 7\dis_{G}(A, B).$
\end{enumerate}
\end{proposition}

\begin{proof}
From the assumption we have $\mu_G(AB)<3\mu^{1/2}_G(A)\mu_G^{1/2}B$. Obtain $H$ as in Lemma~\ref{lem:gleason2}, and when $\mu_G(A),\mu_G(B)$ are small enough, we have $\mu_{G/H}(\pi A)+\mu_{G/H}(\pi B)<1$. Then $G/H$ is a Lie group of dimension $O(1)$. By applying Theorem~\ref{thm: criticality transfer}, we get the desired conclusion.
\end{proof}

\subsection{Structure control on the nearly minimally expanding pairs}\label{sec: 6.3}

 The following useful lemma is a corollary of Theorem~\ref{thm: criticality transfer}, which will be used at various points in the later proofs. It tells us the character given in the parallel Bohr sets is essentially unique. 
 
\begin{lemma}[Stability of characters]\label{lem:fromsmalltobig}
Suppose $G$ is compact, $\chi: G\to\TT$ is a continuous surjective group homomorphism, $J\subseteq \TT$ is a compact interval, and $\eta$ is a constant. 
Suppose we have
\begin{enumerate}[\rm (i)]
\item $\dis_G(A,B)<\min\{\mu_G(A),\mu_G(B),c_G/7\}$ where $c_G$ is from Fact~\ref{fact: new inverse theorem torus};
    \item $\mu_\TT(J)=\mu_G(B)$ and $\mu_G(B\tri\chi^{-1}(J))=\eta \dis_G(A,B)$;
    \item $\mu_G(A)+\mu_G(B)\leq 1-(2\eta+20)\dis_G(A,B)$ and $(3\eta+30)\dis_G<\mu_G(B)$.
\end{enumerate}
 Then there is a compact interval $I$ in $\TT$ such that $\mu_\TT(I)=\mu_G(A)$ and $$\mu_G(A\tri\chi^{-1}(I))\leq 10\dis_G(A,B).$$
\end{lemma}
\begin{proof}
Let $H=\ker(\chi)$.
Note that $\chi$ is an open map, and hence by Fact~\ref{fact: first iso thm} we have $G/H\cong \TT$. By Theorem~\ref{thm: criticality transfer}, there are sets $A'$ and $B'$ in $\TT$ such that $\dis_{\TT}(A',B')\leq 7\dis_G(A,B)$, and 
\[
\mu_G(A\tri \chi^{-1}(A'))=3\dis_G(A,B),\quad \mu_G(B\tri \chi^{-1}(B'))=3\dis_G(A,B).
\]
Then $\mu_\TT(B'\tri J)\leq (\eta+3)\dis_G(A,B)$. By Fact~\ref{fact: new inverse theorem torus}, there is a continuous surjective group homomorphism $\rho:\TT\to\TT$ and two compact intervals $I_A,I_B\subseteq\TT$, such that
\[
\mu_\TT(\rho^{-1}(I_A)\tri A')\leq 7\dis_G(A,B),\quad \mu_\TT(\rho^{-1}(I_B)\tri B')\leq7\dis_G(A,B), 
\]
and $\mu_\TT(A')=\mu_\TT(I_A)$, $\mu_\TT(B')=\mu_\TT(I_B)$. 
On the other hand, as $\mu_\TT(B'\tri J)\leq (\eta+3)\dis_G(A,B)$, thus $\mu_\TT( \rho^{-1}(I_B)\tri J )\leq (\eta+10)\dis_G(A,B)$. Assume $\rho$ is not the identity map, then by an elementary analysis, the fact that $1-\mu_\TT(I)>(2\eta+20)\dis_G(A,B)$, and the upper bound on $\eta$, we have that $\rho=\mathrm{id}$. Set $I=I_A$. Then, we have \[
\mu_G(A\tri\chi^{-1}(I))\leq \mu_G(A\tri\chi^{-1}(A'))+\mu_\TT(A'\tri I)\leq 10\dis_G(A,B)
\]
which finishes the proof. 
\end{proof}

The next lemma shows that, if the symmetric difference of a set $A$ and an interval is small, then $A$ is also contained in an interval of bounded length. To handle arbitrary sets, we use inner measure here.
\begin{lemma}\label{lem: from sym dif to subset}
Suppose $G$ is compact, $\widetilde{\mu}_G$ is the inner measure associated to $\mu_G$, and $A,B \subseteq G$ has $\widetilde{\dis}_G(A,B)<\varepsilon$ with 
\[
\widetilde{\dis}_G(A,B) = \widetilde{\mu}_G(A,B)-\widetilde{\mu}_G(A)- \widetilde{\mu}_G(B). 
\]
Assume further that $\widetilde{A}\subseteq A$ and $\widetilde{B}\subseteq B$ are $\sigma$-compact with $\mu_G(\widetilde{A}) =\widetilde{\mu}_G(A)$ and $\mu_G(\widetilde{B}) =\widetilde{\mu}_G(B)$, $\chi:G\to \TT$ is a continuous surjective group homomorphism, and $I,J$ compact intervals in $\TT$, with $\mu_\TT(I)=\mu_G(A),\mu_\TT(J)=\mu_G(B)$, and
\[
\mu_G(\widetilde{A}\tri\chi^{-1}(I))<\varepsilon,\quad \mu_G(\widetilde{B}\tri \chi^{-1}(J))<\varepsilon.
\]
Then there are intervals $I',J'\subseteq\TT$, such that $A\subseteq\chi^{-1}(I')$, $B\subseteq\chi^{-1}(J')$, and 
\[
\mu_\TT(I')-\widetilde{\mu}_G(A)<10\varepsilon,\quad\mu_\TT(J')-\widetilde{\mu}_G(B)<10\varepsilon. 
\]
\end{lemma}
\begin{proof}

We will show that for all $g \in A\setminus \chi^{-1}(I)$, the distance between $\chi(g)$ and $I$   in $\TT$ is at most $5\varepsilon$, and  for all $g' \in B\setminus \chi^{-1}(J)$, the distance between $g'$ and $J$ is at most $5\varepsilon$. This implies there are intervals $I',J'$ in $\TT$ such that $A\subseteq \chi^{-1}(I')$ and $B\subseteq \chi^{-1}(J')$, and
\[
\mu_G(\chi^{-1}(I')\setminus A)<10\varepsilon\, \quad\mu_\TT(\chi^{-1}(J')\setminus B)<10\varepsilon,
\]
as desired. Observe that $\widetilde{A}\widetilde{B}$ is a $\sigma$-compact subset of $AB$, and so $\dis_G(\widetilde{A}, \widetilde{B})<\varepsilon$.

By symmetry, it suffices to show the statement for $g \in A\setminus \chi^{-1}(I)$. Suppose to the contrary that $g$ is in $A\setminus \chi^{-1}(I)$, and the distance between $\chi(g)$ and $I$ in $\TT$ is strictly greater than $5\varepsilon$. By replacing $\widetilde{A}$ with $\widetilde{A}\cup\{g\}$ if necessary, we can assume $g \in \widetilde{A}$.
We then have
\[
\mu_\TT(\chi(g)\chi(\widetilde{B})\setminus I\chi(\widetilde{B}))\geq 5\varepsilon-\mu_\TT(J\setminus \chi(\widetilde{B}))\geq 4\varepsilon. 
\]
and this implies that $\mu_G(g\widetilde{B}\setminus \chi^{-1}(I)\chi^{-1}(J))\geq3\varepsilon$. Therefore,
\begin{align*}
\mu_G(\widetilde{A}\widetilde{B})&\geq \mu_G\big((\chi^{-1}(I)\cap \widetilde{A})(\chi^{-1}(J)\cap \widetilde{B})\big)+\mu_G(g\widetilde{B}\setminus \chi^{-1}(I)\chi^{-1}(J))\\
&\geq \mu_G(\widetilde{A})+\mu_G(\widetilde{B})-2\varepsilon+3\varepsilon,
\end{align*}
and this contradicts the fact that $\dis_G(\widetilde{A},\widetilde{B})<\varepsilon$.  
\end{proof}

The stability lemma, together with Theorem~\ref{thm: criticality transfer}, will be enough to derive a different proof of a theorem by Tao~\cite{T18}, with a sharp exponent bound. As we mentioned in the introduction, the same result with a sharp exponent bound was also obtained by Christ and Iliopoulou~\cite{ChristIliopoulou} recently, via a different approach. 
\begin{theorem}[Theorem~\ref{thm:mainapproximate} for compact abelian groups]\label{thm: abelian case}
   Let $G$ be a connected compact abelian group, and $A,B$ be compact subsets of $G$ with positive measure. Set $$\lambda =\min\{\mu_G(A),\mu_G(B),1-\mu_G(A)-\mu_G(B)\}.$$
  Given $0<\varepsilon<1$, there is a constant $K=K(\lambda)$ does not depend on $G$, such that if $\delta<K\varepsilon$ and
  \[
  \mu_G(A+B)<\mu_G(A)+\mu_G(B)+\delta\min\{\mu_G(A),\mu_G(B)\}.
  \]
 Then there is a surjective continuous group homomorphism $\chi: G \to \TT$ together with two compact intervals $I,J\in \mathbb T$ with
 \[
 \mu_\TT(I)-\mu_G(A)<\varepsilon\mu_G(A),\quad \mu_\TT(J)-\mu_G(B)<\varepsilon\mu_G(B),
 \]
 and $A\subseteq \chi^{-1}(I)$, $B\subseteq\chi^{-1}(J)$. 
\end{theorem}
\begin{proof}
We first assume that $\dis_G(A,B)$ is sufficiently small, and we will compute the bound on $\dis_G(A,B)$  later. As  $G$ is abelian, by Proposition~\ref{prop: coarsetheorem}, there is a quotient map $\pi: G\to\TT^d$, and $A',B'\subseteq \TT^d$, such that  \[ \mu_G(A \tri\pi^{-1}A')< 3 \dis_G(A,B) \text{ and } \mu_G(B \tri\pi^{-1}B')< 3 \dis_G(A,B)\]
and $\dis_{G/H}(A', B') < 7\dis_{G}(A, B).$ Let $c=c(\tau)$ be as in Fact~\ref{fact:inverse theorem torus}, and by Lemma~\ref{lem:sml}, there is a constant $L$ depending only on $\lambda$ and $c$, and sets $A'',B''\subseteq\TT^d$ with $\mu_{\TT^d}(A'')=\mu_{\TT^d}(B'')=c$ such that
\[
\max\{\dis_G(A'',B'), \dis_G(A'',B''), \dis_G(A',B'')\}<L\dis_G(A,B). 
\]
By Fact~\ref{fact:inverse theorem torus}, there are intervals $I',J'\subseteq\TT$ with $\mu_\TT(I')=\mu_\TT(J')=c$, and a continuous surjective group homomorphism $\rho:\TT^d\to\TT$, such that 
\[
\mu_{\TT^d}(A''\tri \rho^{-1}(I'))<L\dis_G(A,B)\quad\text{and}\quad \mu_{\TT^d}(B''\tri \rho^{-1}(J'))<L\dis_G(A,B). 
\]
By Lemma~\ref{lem:fromsmalltobig}, there are intervals $I',J'\subseteq \TT$ with
\[
\mu_{\TT^d}(A'\tri \rho^{-1}(I'))<10L\dis_G(A,B)\text{ and } \mu_{\TT^d}(B'\tri \rho^{-1}(J'))<10L\dis_G(A,B). 
\]
Let $\chi=\pi\circ\rho$. Hence, we have
\[
\mu_{G}(A\tri \chi^{-1}(I'))<(3+10L)\dis_G(A,B)\text{ and } \mu_{G}(B\tri \chi^{-1}(J'))<(3+10L)\dis_G(A,B). 
\]
Using Lemma~\ref{lem: from sym dif to subset}, there are intervals $I,J\subseteq\TT$, such that $A\subseteq\chi^{-1}(I)$, $B\subseteq\chi^{-1}(J)$, and
\begin{align*}
    &\mu_\TT(I)-\mu_G(A)<(30+100L)\dis_G(A,B),\\
    &\mu_\TT(J)-\mu_G(B)<(30+100L)\dis_G(A,B).
\end{align*}
Now, we fix 
\[
K:=\min\Big\{\frac{1}{30+100L}, \frac{c}{L}\Big\},
\]
and $\delta<K\varepsilon$, where $\dis_G(A,B)=\delta\min\{\mu_G(A),\mu_G(B)\}$. Clearly, we will have 
\begin{align*}
    &\mu_\TT(I)-\mu_G(A)<\varepsilon\min\{\mu_G(A),\mu_G(B)\},\\
    &\mu_\TT(J)-\mu_G(B)<\varepsilon\min\{\mu_G(A),\mu_G(B)\}.
\end{align*}
Note that in the above argument, we apply Fact~\ref{fact:inverse theorem torus} on $A'',B''$, and this would require that $L\dis_G(A,B)<c$. By the way we choose $K$, we have
\[
L\dis_G(A,B)=L\delta\min\{\mu_G(A),\mu_G(B)\}<c,
\]
as desired. 
\end{proof}

  The following theorem shows that, once we have a certain group homomorphism to tori, we will get a good structural control on the (nearly) minimal expansion sets.

\begin{proposition}[Toric domination from a given homomorphism]\label{prop: struc on AB}
Suppose $A, B$ have $\dis_G(A,B)<\min\{\mu_G(A), \mu_G(B)\}$, and  $\chi:G\to\TT$ is a continuous surjective group homomorphism such that $\mu_\TT(\chi(A))+\mu_\TT(\chi(B))<1/5$. Then there is a continuous and surjective group homomorphism $\rho: G \to \TT$, a constant $K_0$ only depending on $\min\{\mu_G(A),\mu_G(B)\}$, and compact intervals $I, J \subseteq \TT$ with $\mu_\TT(I)=\mu_G(A)$ and $\mu_\TT(J)=\mu_G(B)$, such that 
\[
\mu_G(A\tri \rho^{-1}(I))<K_0\dis_G(A,B),\quad \text{and}\quad\mu_G(B\tri \rho^{-1}(J))<K_0\dis_G(A,B).
\]
\end{proposition}
\begin{proof}
By Theorem~\ref{thm: criticality transfer}, there are $A',B'\subseteq \TT$, such that 
\begin{equation}\label{eq: struc for AB R case}
\mu_G(A\tri \chi^{-1}(A'))<3\dis_G(A,B) \text{ and } \mu_G(B\tri \chi^{-1}(B'))<3\dis_G(A,B),
\end{equation}
and $\dis_\TT(A',B')<7\dis_G(A,B)$. By Theorem~\ref{thm: abelian case}, there are continuous surjective group homomorphism $\eta: \TT \to \TT$, a constant $L$ depending only on $\min\{\mu_G(A),\mu_G(B)\}$, and compact intervals $I, J \subseteq \TT$ such that $\mu_\TT(I)=\mu_\TT(A')$,  $\mu_\TT(J)=\mu_\TT(B')$, and 
\begin{equation}\label{eq: struc for AB R case2}
\mu_\TT(A'\tri \eta^{-1}(I))<L\dis_G(A,B) \text{ and } \mu_\TT(B'\tri \eta^{-1}(J))<L\dis_G(A,B).
\end{equation}
Set $\rho = \eta \circ \chi$. The conclusion follows from \eqref{eq: struc for AB R case} and \eqref{eq: struc for AB R case2} with $K_0=L+3$.
\end{proof}

  In light of Proposition~\ref{prop: struc on AB}, in the rest of the paper, we will be focusing on finding the desired group homomorphism mapping $G$ to tori.




\section{Pseudometrics and group homomorphisms onto tori}\label{section:pseudometric}

Proposition~\ref{prop: coarsetheorem} and Proposition~\ref{prop: struc on AB} reduce the proof of Theorem~\ref{thm:mainequal} and Theorem~\ref{thm:mainapproximate} to problems of constructing certain group homomorphisms from a Lie group onto $\TT$ or $\RR$. In this section, we show these problems can be reduced further to problems of constructing  pseudometrics with certain properties on the ambient group.  Section~\ref{sec: 7.1} shows that a linear pseudometric suffices, and Section~\ref{sec: 7.2} and Section~\ref{sec: 7.3} does so when the pseudometric is almost linear and almost monotone.

 Throughout, $G$ is a  {\it connected} and {\it unimodular} \emph{Lie group} with Haar measure $\mu_G$. Recall that a {\bf pseudometric} on a set $X$ is a function $d: X \times X \to \RR$ satisfying the following three properties:
\begin{enumerate}
    \item (Reflexive) $d(a,a) =0$ for all $a \in X$,
    \item (Symmetry) $d(a,b)= d(b,a)$ for all $a,b \in X$,
    \item (Triangle inequality) $d(a,c)  \leq d(a,b) +d(b,c) \in X$.
\end{enumerate}
Hence, a pseudometric on $X$ is a metric if for all $a,b \in X$, we have  $d(a,b)=0$ implies $a =b$. If $d$ is a pseudometric on $G$, for an element $g\in G$, we set $\|g\|_d = d(\id, g)$. 

\subsection{Linear pseudometrics}\label{sec: 7.1}

 Suppose $d$ is a pseudometric on $G$. We say that $d$ is {\bf left-invariant} if for all $g, g_1, g_2 \in G$, we have $d(gg_1, gg_2) = d(g_1,g_2)$. left-invariant pseudometrics arise naturally from measurable sets in a group; the pseudometric we will construct in Section~\ref{sec: geometry II} is of this form.

\begin{proposition}\label{prop: construct pseudo-metric}
Suppose  $A$ is a measurable subset of $G$. For $g_1$ and $g_2$ in $G$, define
$$ d(g_1, g_2) = \mu_G(A) - \mu_G( g_1A \cap g_2A). $$
Then
$d$ is a continuous left-invariant pseudometric on $G$.
\end{proposition}
 \begin{proof}
 We first verify the triangle inequality. Let $g_1$, $g_2$, and $g_3$ be in $G$, we need to show that 
\begin{equation}\label{eq: pseudo metric d_A}
    \mu_G( A) - \mu_G(g_1A \cap g_3 A) \leq \mu_G( A) - \mu_G(g_1A \cap g_2 A)  + \mu_G( A) - \mu_G(g_2A \cap g_3 A).
\end{equation}
As $\mu_G(A) = \mu_G(g_2 A)$, we have $\mu_G( A) - \mu_G(g_1A \cap g_2 A) = \mu_G( g_2A \setminus g_1 A)$, and   $\mu_G( A) - \mu_G(g_2A \cap g_3 A) = \mu_G( g_2A \setminus g_3 A)$. Hence, \eqref{eq: pseudo metric d_A} is equivalent to 
$$   \mu_G( g_2A) - \mu_G( g_2A\setminus g_1A) -\mu_G(g_2A\setminus g_3A) \leq \mu(g_1A \cap g_3A). $$
Note that the left-hand side is at most $ \mu_G( g_1A \cap g_2A \cap g_3A)$, which is less than the right-hand side. Hence, we get the desired conclusion. The continuity of $d$ follows from Fact~\ref{fact: Haarmeasurenew}(vii), and the remaining parts  are straightforward.
 \end{proof}

Another natural source of  left-invariant pseudometrics is group homomorphims onto metric groups. Suppose $\widetilde{d}$ is a continuous left-invariant metric on a group $H$ and $\pi: G \to H$ is a group homomorphism, then for every $g_1,g_2$ in $G$, one can naturally define a pseudometric $d(g_1, g_2) = \widetilde{d}( \pi(g_1), \pi(g_2))$. It is easy to see that such  $d$ is a continuous left-invariant pseudometric, and $\{g\in G : \Vert g \Vert_d=0\}= \ker(\pi)$ is a normal subgroup of $G$.  The latter part of this statement is no longer true for an arbitrary continuous left-invariant pseudometric, but we still have the following:

\begin{lemma} \label{lem: kerd}
Suppose $d$ is a continuous left-invariant pseudometric on $G$. 
Then the set
 $\{g \in G : \|g\|_d =0 \}$
is the underlying set of a closed subgroup of $G$.
\end{lemma}

\begin{proof}
Suppose $g_1$ and $g_2$ are elements in $G$ such that $\|g_1\|_d = \|g_2\|_d=0$. Then 
\[
d(\id, g_1g_2) \leq d(\id, g_1) + d(g_1, g_1g_2) = d(\id, g_1)+ d(\id, g_2)=0.
\]
Now, suppose $(g_n)$ is a sequence of elements in $G$ converging to $g$ with $\|g_n\|_d=0$. Then $\|g\|_d=0$ by continuity, we get the desired conclusions.
\end{proof}

  In many situations, a left-invariant pseudometric allows us to construct surjective continuous group homomorphism to metric groups.
The following lemma tells us precisely when this happens. We omit the proof as the result is motivationally relevant but will not be used later on.
\begin{lemma} Let $d$ be a continuous left-invariant pseudometric on $G$. The following are equivalent,
\begin{enumerate}[\rm (i)]
    \item The set
 $\{g \in G : \|g\|_d =0 \}$
is the underlying set of a closed normal subgroup of $G$.
    \item There is a continuous surjective group homomorphism  $\pi: G \to H$, and $\widetilde{d}$ is a left-invariant metric on $H$. Then
    \[
    d(g_1, g_2) = \widetilde{d}( \pi g_1, \pi g_2). 
    \]
\end{enumerate}
Moreover, when (ii) happens, $\{g \in G : \|g\|_d =0 \} = \ker \pi$, hence $H$ and $\widetilde{d}$ if exist are uniquely determined up to isomorphism. 
\end{lemma}

  The group $\RR$ and $\TT = \RR/ \ZZ$ are naturally equipped with the metrics $d_{\RR}$ and $d_{\TT}$ induced by the Euclidean norms, and these metrics interact in a very special way with the additive structures. Hence one would expect that if there is a group homomorphism from $G$ to either $\RR$ or $\TT$, then $G$ can be equipped with a pseudometric which interacts nontrivially with addition.

 Let $d$ be a left-invariant pseudometric on $G$. The {\bf radius} $\rho$ of $d$ is defined to be $\sup\{\|g\|_d : g \in G\}$; this is also $\sup\{d(g_1, g_2) : g_1, g_2 \in G\}$ by left invariance. We say that  $d$ is {\bf locally linear} if it satisfies the following properties:
\begin{enumerate}
    \item $d$ is continuous and left-invariant;
    \item for all $g_1$, $g_2$, and $g_3$ with $d(g_1, g_2)+d(g_2,g_3) < \rho$, we have either
    \begin{equation}\label{eq: key of linear pseudometric}
        d(g_1,g_3) = d(g_1, g_2) + d(g_2,g_3), \text{ or } d(g_1,g_3) = |d(g_1, g_2) - d(g_2,g_3)|.
    \end{equation}
\end{enumerate}
A pseudometric $d$ is {\bf monotone} if for all $g\in G$ such that $\|g\|_d< \rho/2$, we have $$\|g^2\|_d= 2 \|g\|_d.$$ To investigate the property of this notion further, we need the following fact about the adjoint representations of Lie groups~\cite[Proposition 9.2.21]{hilgert}.
\begin{fact}\label{fact: adjoint}
Let $\mathfrak{g}$ be the Lie algebra of $G$, and let $\mathrm{Ad}: G\to \mathrm{Aut}(\mathfrak{g})$ be the adjoint representation. Then $\ker(\mathrm{Ad})$ is the center of $G$. 
\end{fact}

The following result is the first time we need $G$ to be a  Lie group instead of just a locally compact group.

\begin{proposition}\label{prop: automaticmonotone}
If $d$ is a locally linear pseudometric on $G$,  then $d$ is monotone.
\end{proposition}

\begin{proof}
We first prove an auxiliary statement.
\begin{claim}
Suppose $s: G \to G, g \mapsto g^2$ is the squaring map. Then there is no open $U \subseteq G$ and proper closed subgroup $H$ of $G$ such that $s(U) \subseteq H$.\medskip

\noindent\emph{Proof of Claim.}
Consider the case where $G$ is a connected component of a linear algebraic subgroup of $\mathrm{GL}_n(\RR)$. Let $J_s$ be the Jacobian of the function $s$. Then the set $$\{ g\in G : \det J_s(g)=0\}$$
has the form $G \cap Z$ where $Z$ is a solution set of a system of polynomial equations. It is not possible to have $G\cap Z = G$, as $s$ is a local diffeomorphism at $\id$. Hence, $G\cap Z$ must be of strictly lower dimension than $G$. By the inverse function theorem, $s|_{G \setminus Z}$ is open. Hence $s(U)$ is not contained in a subgroup of $G$ with smaller dimension.

We also note a stronger conclusion for abelian Lie group: If $V$ is an open subset of a not necessarily connected abelian Lie group $A$, then the image of $A$ under $a \mapsto a^2$ is not contained in a closed subset of $A$ with smaller dimension. Indeed, $A$ is isomorphic as a topological group to $D \times \TT^m \times \RR^m$, with $D$ a discrete group. If $$U \subseteq D \times \TT^m \times \RR^m, $$ then it is easy to see that $\{ a^2 : a\in V\}$ contains a subset of $D \times \TT^m \times \RR^m$, and is therefore not a subset of a closed subset of $A$ with smaller dimension.

Finally, we consider the general case. Suppose to the contrary that $s(U) \subseteq H$ with $H$ a proper closed subgroup of $G$. Let $Z(G)$ be the center of $G$, $G'= G/Z(G)$, $\pi: G \to G'$ be the quotient map, $U'= \pi(U)$, and
$$s': G' \to G', g' \mapsto (g')^2.$$ Then $U'$ is an open subset of $G'$, which is isomorphic as a topological group to a connected component of an algebraic group by Fact~\ref{fact: adjoint}. By the earlier case, $s'(U')$ is not contained in any proper closed subgroup of $G'$, so we must have $\pi(H)=G'$. In particular, this implies $\dim(H \cap Z(G))< \dim Z(G)$, and $HZ(G)=G$. Choose $h \in  H$ such that $hZ(G) \cap U$ is nonempty. Then 
$$ s(hZ(G) \cap U) = \{  h^2  a^2 : a \in Z(G) \cap h^{-1} U   \}. $$
As $s(hZ(G) \cap U)  \subseteq H$, we must have $\{ a^2 : a \in Z(G) \cap h^{-1} U \}$ is a subset of $H\cap Z(G)$. Using the case for abelian Lie groups, this is a contradiction, because  $H\cap Z(G)$ is a closed subset of $Z(G)$ with smaller dimension.
\end{claim}

We now get back to the problem of showing that $d$ is monotone. As $d$ is invariant, $d(\id, g)= d(g, g^2)$ for all $g\in G$.
From local linearity of $d$, for all $g\in G$ with $\|g\|_d< \rho/2$, we either have $$\|g^2\|_d= 2 \|g\|_d \quad\text{or}\quad \|g^2\|_d=0.$$ It suffices to rule out the possibility that $0<\|g\|_d<\rho/4$, and $\|g^2\|_d= 0$. 

As $d$ is continuous, there is an open neighborhood $W$ of $g$ such that for all $g' \in W$, we have $\|g'\|_d>0$ and $\|(g')^2\|_d=0$. From Lemma~\ref{lem: kerd}, the set $\{g \in G :  \|g\|_d =0\}$ is a closed subgroup of $G$. As $d$ is nontrivial and $G$ is a connected Lie group,  $\{g \in G :  \|g\|_d =0\}$ must be a Lie group with smaller dimension. Therefore, we only need to show that if $W$ is an open subset of $G$, then $s(W)$ is not contained in a closed subgroup of $G$ with smaller dimension, where $s:G\to G$ is the squaring map, and this is guaranteed by the earlier claim.
\end{proof}

  The next result confirms our earlier intuition: locally linear pseudometric in $G$ will induce a homomorphism mapping to either $\TT$ or $\RR$.

\begin{proposition}\label{prop: strong linear}
Suppose $d$ is a locally linear pseudometric with radius $\rho>0$. Then $\ker d$ is a normal subgroup of $G$, $G/\ker d$ is isomorphic to $\TT$ if $G$ is compact, and $G/\ker d$ is isomorphic to $\RR$ if $G$ is noncompact.
\end{proposition}
\begin{proof}
We first prove that $\ker d$ is a normal subgroup of $G$.
Suppose $\|g\|_d =0$ and $h\in G$ satisfies $\|h\|_d< \rho/4$. We have 
\begin{align*}
d(h, hgh^{-1}) &= d(\id, gh^{-1}) \\
&=|d(\id,g)\pm d(g, gh^{-1})| = d(\id, h^{-1})= d(\id, h).    
\end{align*}
Hence, $d(\id, hgh^{-1})=|d(\id,h)\pm d(h,hgh^{-1})|$ is either $0$ or $2d(\id, h)$. Assume first that $\|hgh^{-1}\|_d=0$ for every such $h$ when $\|g\|_d=0$. Let 
$$U:=\{h: \|h\|_d< \rho/4\}.$$ By the continuity of $d$, $U$ is open. Hence for every $h$ in $G$, $h$ can be written as a finite product of elements in $U$. By induction, we conclude that for every $h\in G$, $\|hgh^{-1}\|_d=0$ given $\|g\|_d=0$, and this implies that $\ker d$ is normal in $G$.

Suppose $\|hgh^{-1}\|_d = 2 \|h\|_d$. By Proposition~\ref{prop: automaticmonotone}, $d$ is monotone. Hence, we have   $$\| hg^2h^{-1}\|_d = 4 \| h\|_d.$$ On the other hand, as $\|g\|_d=0$, repeating the argument above, we get $\| hg^2h^{-1}\|_d$ is either $0$ or $2\| h\|_d$. Hence, $\| h\|_d=0$, and so $\| hgh^{-1}\|_d=0$.

We now show that $G'= G/\ker d$ has dimension $1$. Let $d'$ be the pseudometric on $G'$ induced by $d$.  Choose $g\in G'$ in the neighborhood of $\mathrm{id}_{G'}$ such that $g$ is in the image of the exponential map and $\|g\|_{d'}< \rho/4$. If $g'$ is another element in the neighborhood of $\mathrm{id}_{G'}$ which is in the image of the exponential map and $\| g'\|_{d'}< \rho/4$. Without loss of generality, we may assume $\|g'\|_{d'}\leq\| g\|_{d'}$. Suppose $g'=\exp(X)$. Then, by monotonicity, there is $k\geq1$ such that $\|(g')^k\|_{d'}\geq\| g\|_{d'}$. By the continuity of the exponential map, there is $t\in (0,1]$ such that $$\|g\|_{d'}=\|\exp(tkX)\|_{d'}.$$ This implies that $g$ and $g'$ are on the same one parameter subgroup, which is the desired conclusion.
\end{proof}

\subsection{Almost linear pseudometrics: relative sign and total weight functions}\label{sec: 7.2}
 In this section, we will introduce a weakening of the notion of a locally linear pseudometric and define the relative sign function and total weight function associate to it. When $d$ is a pseudometric arising from a measurable subset $A$ as in Proposition~\ref{prop: construct pseudo-metric}, these roughly give the ``direction'' and the ``distance'' that an element of the group translates $A$.
 
Throughout this section,  $d$ is a pseudometric on $G$ with radius $\rho>0$, and $\gamma$ is a constant with $0<\gamma<10^{-8}\rho$. For a constant $\lambda$,  we write $I(\lambda)$ for the interval $(-\lambda,\lambda)$ in either $\RR$ or $\TT$, and we write $N(\lambda)$ for $\{g \in G : \|g\|_d \in I(\lambda)\}$. By Fact~\ref{fact: Haarmeasurenew}(vii), $N(\lambda)$ is an open set, and hence measurable.
We say that $d$ is {\bf $\gamma$-linear} if it satisfies the following conditions:
\begin{enumerate}
    \item $d$ is continuous and left-invariant;
    \item for all $g_1, g_2, g_3 \in G$ with $d(g_1, g_2)+ d(g_2, g_3) <\rho-\gamma$, we have either
\[
d(g_1,g_3) \in   d(g_1, g_2) + d(g_2,g_3) + I(\gamma), 
\]
or
\[
d(g_1,g_3) \in   |d(g_1, g_2) - d(g_2,g_3)| + I(\gamma).
\]
\end{enumerate}
Given $\alpha\leq \rho$, let $N(\alpha)=\{g\in G:\|g\|_d\leq\alpha\}$. 
We say that $d$ is {\bf $\gamma$-monotone} if for all $g \in N(\rho/2-\gamma)$, we have 
\[
\|g^2\|_d \in 2\|g\|_d+I(\gamma). 
\]

  The next lemma says that under the $\gamma$-linearity condition, the group $G$ essentially has only one ``direction'': if there are three elements have the same distance to $\id$, then at least two of them are very close to each other. 
\begin{lemma} \label{lem:dichotomy}
Suppose $d$ is a $\gamma$-linear pseudometric on $G$. If $g, g_1, g_2 \in G$ such that
\[
\| g \|_d =\| g_1 \|_d = \| g_2 \|_d  \in    I(\rho/4-\gamma) \setminus I(2\gamma),
\]
and $d(g_1, g_2) \in 2\| g \|_d + I(\gamma)$. Then either $d(g, g_1) \in I(\gamma)$ or $d(g, g_2) \in I(\gamma)$.
\end{lemma}
\begin{proof}
Suppose both $d(g, g_1)$ and $d(g, g_2)$ are not in $I(\gamma)$. By $\gamma$-linearity of $d$, we have 
\[
d(g,g_1)\in |d(\id,g)\pm d(\id,g_1)|+I(\gamma),
\]
and so $d(g,g_1)\in 2\|g\|_d+I(\gamma)$. Similarly, we have  $d(g,g_2) 2\|g\|_d+I(\gamma)$. 

Suppose first that $ d(g_1, g_2)\in d(g, g_1)+d(g, g_2) + I(\gamma)$, then
 \[
 d(g_1, g_2)\in 4\|g\|_d+I(3\gamma).
 \]
 On the other hand, by $\gamma$-linearity we have $d(g_1,g_2)\leq 2\|g\|_d+\gamma.$
 Hence, we have $\|g\|_d \in I(2\gamma)$, a contradiction. 
 
  The other two possibilities are
$d(g_1, g_2)+ d(g, g_2) \in d(g, g_1) +I (\gamma)$ or $d(g_1, g_2)+d(g, g_1) \in d(g, g_2)+ I (\gamma)$, but similar calculations also lead to contradictions.
\end{proof}

Proposition~\ref{prop: localmonotoneimplyglobalmonotone} below is a partial replacement for Proposition~\ref{prop: automaticmonotone} for linear pseudometric. The fact that we do not automatically have monotonicity is a reason that the later Section~\ref{subsec: almost linear fiber} is much harder than Section~\ref{subsec: linear fiber}.

\begin{proposition}[Path monotonicity implies global monotonicity]\label{prop: localmonotoneimplyglobalmonotone}
Let $\mathfrak{g}$  be the Lie algebra of $G$, $\exp: \mathfrak{g} \to G$ the exponential map, and $d$ a $\gamma$-linear pseudometric on $G$. Suppose for each $X$ in $\mathfrak{g}$, we have one of the following two possibilities:
\begin{enumerate}[\rm (i)]
    \item $\|\exp(tX)\|_d < \gamma$ for all $t \in \RR$;
    \item there is $t_0\in  \RR^{>0}$ with 
$\|\exp(t_0X)\|_d \in I(\rho/2-\gamma)\setminus I(\rho/4)$,  
\begin{equation}\label{eq: condition (1)}
\|\exp(2t_0X)\|_d = 2\|\exp(t_0X)\|_d+ I(\gamma), \end{equation} 
and 
\begin{equation}\label{eq: condition (2)}
\|\exp(tX)\|_d + \|\exp((t_0-t)X)\|_d \in \|\exp(t_0X)\|_d+I(\gamma)
\end{equation}
for all $t\in [0, t_0]$.
\end{enumerate}
Then $d$ is $(9\gamma)$-monotone.
\end{proposition}
\begin{proof}
 Fix an element $g$ of $G$ with $\|g\|_d \in I(\rho/2 -16\gamma)$. Our job is to show that $\|g^2\|_d \in 2\|g\|_d + I(9\gamma)$. Since $G$ is compact and connected, the exponential map $\exp$ is surjective. We get $X \in \mathfrak{g}$ such that $g\in \{ \exp(tX) : t \in \RR\}$. If we are in scenario (i), then  $\|g\|_d< \gamma$, hence $ \|g^2\|_d \in 2\|g\|_d +I(3\gamma)$. Therefore, it remains to deal with the case where we have an $t_0$ as in (ii). 
 
  Set $g_0 = \exp(t_0X)$. We consider first the special case where $\|g\|_d < \|g_0\|_d-2\gamma$.  As $d$ is continuous, there is $t_1 \in [0, t_0]$ such that with $g_1= \exp(t_1X)$, we have $\|g_1\|_d =\|g\|_d$. Let $t_2 = -t_1$, and $g_2 = \exp(t_2X) = g_1^{-1}$. Since $d$ is invariant, 
  \[
  \|g_2\|_d = d( g^{-1}_1, \id) = d( \id, g_1) = \|g_1\|_d.
  \]
  Hence,  $\|g_1\|_d = \|g_2\|_d = \| g \|_d$. If $\| g\|_d <2\gamma$, then $\|g^2\|_d \in 2\|g\|_d+ I(5\gamma)$ and we are done. Thus we suppose $\| g\|_d \geq 2\gamma$. Then, by Lemma~\ref{lem:dichotomy}, either $d(g, g_1)<\gamma$, or $d(g, g_2) <\gamma$. 
  
  Since these two cases are similar, we assume that $d(g, g_1)<\gamma$. By $\gamma$-linearity, $\|g_1^2\|_d$ is in either
 $
  2\|g_1\|_d+I(\gamma)$ or $I(\gamma)$. Using $\|g_0^2\|_d \in 2\|g_0\|_d+I(\gamma)$ and the assumption that $\|g\|_d < \|g_0\|_d-2\gamma$, in either case, we have 
  \begin{equation} \label{g1andg0}
      \| g^2_1\|_d< \|g^2_0\|_d-2\gamma. 
  \end{equation}
   
  Since $g^{-1}_0g_1= g_1 g_0^{-1}$, and by $\gamma$-linearity of $d$,  we get
  \begin{equation}\label{eq:d(g_1^2, g^2_0)}
       d(g_1^2, g^2_0) = d( \id, g^{-2}_1g_0^{2}) = d( \id, (g^{-1}_1g_0)^2) \in  \{ 0, 2d(g_1, g_0)\} + I(\gamma).  
       \end{equation}
  By \eqref{eq: condition (2)}, we have $\|g_1\|_d+ d(g_1, g_0) \in \| g_0\|_d+I(\gamma)$. Recalling that $\|g_1\|_d =\|g\|_d >2\gamma$, and from \eqref{eq: condition (1)} and \eqref{eq:d(g_1^2, g^2_0)}, we have
  \begin{equation}\label{eq: g_1^2, g_0^2}
  d(g^2_1, g^2_0)< 2\| g_0\|_d-3\gamma = \|g_0^2 \| -2\gamma.
    \end{equation}
By \eqref{g1andg0}, \eqref{eq: g_1^2, g_0^2}, and the $\gamma$-linearity of $d$, we have
\[
\|g_1^2\|_d\in \|g_0^2\|_d-d(g_1^2,g_0^2)+I(\gamma).
\]
Therefore by \eqref{eq: condition (2)} and \eqref{eq:d(g_1^2, g^2_0)}, we have either
\begin{align*}
   \|g_1^2\|_d\in  2\|g_1\|_d +I(5\gamma) \quad\text{or}\quad \|g_1^2\|_d\in  2\|g_0\|_d +I(3\gamma).
\end{align*}
As $\|g_1\|_d^2 \leq  2\|g_1\|+ \gamma <2\|g_0\|-5\gamma$, we must have $\|g_1^2\| \in 2\|g_1\|+I(5 \gamma)$. Now, since $\|g^{-1}_1g\|_d= d(g_1, g) < \gamma$, again by the $\gamma$-linearity we conclude that
\[
d(g^2_1, g^2)= \|(g^{-1}_1g)^2\|_d < 3\gamma. 
\]
  Thus, $\|g^2\|_d \in  2\|g\|_d+I(9\gamma).$
  
  Finally, we consider the other special case where $\|g_0\|_d+2\gamma<\|g\|_d<\rho/2-16\gamma$. For $g_1=\exp(t_1X)$ with $t_1\in [0,t_0]$,  we have $\|g_1^2\|_d\in 2\|g_1\|+I(8\gamma)$ by a similar argument as above. Using continuity, we can choose $t_1$ such that $\|g_1^2\|_d=\|g\|_d$, and let $g_2 = g_1^{-1}$. The argument goes in exactly the same way with the role of $g_1$ replaced by $g_1^2$ and the role of $g_2$ replaced by $g_2^2$.
\end{proof}

  Suppose $d$ is $\gamma$-linear.
We define $s(g_1,g_2)$ to be the {\bf relative sign}   for  $g_1, g_2 \in G$ satisfying   $\|g_1\|_d +\|g_2\|_d< \rho -\gamma$ by
$$ s(g_1, g_2) = 
\begin{cases}
0 &\text{ if }  \min\{\|g_1\|_d, \|g_2\|_d \}\leq  4\gamma, \\
1 &\text{ if } \min\{\| g_1 \|_d, \| g_2 \|_d \}>  4\gamma \text{ and }  \| g_1g_2 \|_d \in \| g_1\|_d + \| g_2 \|_d + I(\gamma).  \\
-1 &\text{ if } \min\{\| g_1 \|_d, \| g_2 \|_d \}>  4\gamma \text{ and }  | g_1g_2 |_d \in \big| \| g_1\|_d - \| g_2 \|_d \big| + I(\gamma).
\end{cases}
$$
Note that this is well-defined because when $\min\{\| g_1 \|_d, \| g_2 \|_d \}\geq  4\gamma$ in the above definition, the differences between $\big| \| g_1\|_d - \| g_2 \|_d \big|$ and $ \| g_1\|_d + \| g_2 \|_d$ is at least $6\gamma$. The following lemma gives us tools to relate signs between different elements.

\begin{proposition}\label{prop: propertoesofsign}
Suppose $d$ is $\gamma$-linear and $\gamma$-monotone. Then for $g_1$, $g_2$,  and $g_3$ in $N(\rho/4-\gamma)\setminus N(4\gamma)$, we have the following
\begin{enumerate}[\rm (i)]
    \item $s(g_1, g^{-1}_1)=-1$ and $s(g_1, g_1)=1$.
    \item $s(g_1, g_2) =s(g_2, g_1)$.
    \item $ s(g_1, g_2) = s(g^{-1}_1, g^{-1}_2) = -s(g^{-1}_1, g_2) = -s(g_1, g^{-1}_2)  $.
    \item $s(g_1, g_2)s(g_2,g_3)s(g_3,g_1)=1.$
    \item If $\|g_1\|_d\leq  \|g_2\|_d$, and $g_1g_2$ is in $N(\rho/4-\gamma)\setminus N(4\gamma)$, then $$s(g_0,g_1g_2) = s(g_0,g_2g_1) = s(g_0, g_2).$$
\end{enumerate}
\end{proposition}

\begin{proof}
As  $g_1$, $g_2$,  and $g_3$ are in $N(\rho/4-\gamma)\setminus N(4\gamma)$, one has $s(g_i, g_j) \neq 0$ for all $i, j \in \{1, 2, 3\}$.
The first part of (i) is immediate from the fact that $\|\id\|_d =0$, and the second part of (i) follows from  the $\gamma$-monotonicity and the definition of the relative sign. 

We now prove (ii). Suppose to the contrary that $s(g_1,g_2) = -s(g_2,g_1)$. Without loss of generality, assume $s(g_1, g_2)=1$. Then $\|g_1g_2g_1g_2\|_d$ is in $2\|g_1g_2\|_d  + I(\gamma)$, which is a subset of $2\|g_1\|_d+ 2\|g_2\|_d + I(3 \gamma)$.  On the other hand, as $s(g_2,g_1)=-1$, we have
\[
\|g_1g_2g_1g_2\|_d\in \big| \|g_1\|_d \pm (  \|g_2\|_d -\|g_1\|_d)\pm\|g_2\|_d \big| +I(3 \gamma). 
\]
This contradicts the assumption that $g_1$ and $g_2$ are not in $N(4\gamma)$.

Next, we prove the first and third equality in (iii).
Note that $\|g\|_d = \|g^{-1}\|_d$ for all $g\in G$ as $d$ is symmetric and invariant. Hence, $\|g_1g_2\|_d = \|g_2^{-1}g_1^{-1}\|_d$. This implies that $s(g_1, g_2) = s(g_2^{-1}, g_1^{-1})$. Combining  with (ii), we get the first equality in (iii). The third equality in (iii) is a consequence of the first  equality in (iii). 

Now, consider the second equality in (iii). Suppose $s(g^{-1}_1, g_2^{-1}) = s(g^{-1}_1, g_2)$. Then, from (ii) and the first equality of (iii), we get $s(g_2, g_1) = s(g^{-1}_1, g_2)$. Hence, either 
$$\|g_2g_1 g_1^{-1} g_2\|_d \in 2\left(\|g_1\|_d+ \|g_2\|_d\right) + I(3 \gamma)$$  or $$\|g_2g_1 g_1^{-1} g_2\|_d \in 2\big|\|g_1\|_d-\|g_2\|_d\big| + I(3 \gamma).$$ On the other hand, $\|g_2g_1 g_1^{-1} g_2\|_d = \|g_2^2\|_d$, which is in $2\|g_2\|_d + I(\gamma)$. We get a contradiction with the fact that $g_1$ and $g_2$ are not in $N( 4\gamma)$.

We now prove (iv).
Without loss of generality, assume $\|g_1\|_d \leq \|g_2\|_d \leq \|g_3\|_d$. Using (iii) to replace $g_3$ with $g_3^{-1}$ if necessary, we can further assume that  $s(g_2, g_3) =1$. We need to show that $s(g_1, g_2)= s(g_1,g_3)$. Suppose to the contrary. Then, from (iii), we get $s(g_1, g_2) = s(g^{-1}_1,g_3)$.  Using (iii) to replacing $g_1$ with $g_1^{-1}$ if necessary, we can assume that 
$s(g_1, g_2) = s(g^{-1}_1,g_3)=1.$ Using (ii), we get $s(g_2,g_1)=1$. Hence,  either 
$$\|g_2g_1 g_1^{-1}g_3\|_d \in 2 \|g_1\|_d+\|g_2\|_d+ \|g_3\|_d+ I(3\gamma)$$ or  $$\|g_2g_1 g_1^{-1}g_3\|_d \in \|g_3\|_d -\|g_2\|_d +I(3\gamma).$$ On the other hand,  $\|g_2g_1 g_1^{-1}g_3\|_d = \|g_2g_3\|_d$ is in $\|g_2\|_d+\|g_3\|_d+I(\gamma)$. Hence, we get a contradiction to the fact that $g_1$, $g_2$, and $g_3$ are not in $N(4\gamma)$.

Finally, we prove (v). Using (iv), it suffices to show  $s(g_1g_2, g_2) = s(g_2g_1, g_2)=1$. We will only show the former, as the proof for the latter is similar. Suppose to the contrary that $s(g_1g_2, g_2)=-1$. Then $\|g_1g^{2}_2\|_d$ is in $\big|\|g_1g_2\|_d -\|g_2\|_d\big|+ I(\gamma)$, which is a subset of $\|g_1\|_d+ I(2\gamma)$. On the other hand, $\|g_1g^{2}_2\|_d$ is also in $\big|\|g_1\|_d -\|g^2_2\|_d\big|+ I(\gamma)$ which is a subset of $ 2\|g_2\|_d - \|g_1\|_d + I(2\gamma). $ Hence, we get a contradiction with the assumption that $g_1$ and $g_2$ are not in $N(4\gamma)$.
\end{proof}

  The notion of relative sign corrects the ambiguity in calculating distance, as can be seen in the next result.

\begin{lemma} \label{lem: Estimationusinsignchange}
Suppose $d$ is $\gamma$-monotone  $\gamma$-linear, and $g_1$ and $g_2 $ are in $N(\rho/16- \gamma)$ with $\|g_1\|_d \leq \|g_2\|_d$. Then we have the following
\begin{enumerate}[\rm (i)]
    \item Both $\|g_1g_2\|_d$ and $\|g_2g_1\|_d$ are in  $ s(g_1, g_2)\|g_1\|_d + \|g_2\|_d  + I(5\gamma)$.
    \item  If $g_0$ is in $N(\rho/4) \setminus  N(4 \gamma)$, then both $ s(g_0, g_{1}g_{2}) \|g_{1}g_{2}\|_d$ and $ s(g_0, g_{2}g_{1}) \|g_{2}g_{1}\|_d$ are in 
    $$s(g_0, g_{1}) \|g_{1}\|_d +  s(g_0, g_2) \|g_{2}\|_d + I(25\gamma).  $$
\end{enumerate}
\end{lemma}

\begin{proof}
We first prove (i). When $g_1, g_2 \notin N(4\gamma)$, the statement for $\|g_1g_2\|_d$ is immediate from the definition of the relative sign, and the statement for $\|g_2g_1\|_d$ is a consequence of Proposition~\ref{prop: propertoesofsign}(ii). Now suppose $\|g_1\|_d< 4 \gamma$. From the $\gamma$-linearity, we have  $$ \|g_2\|_d-\|g_1\|_d - \gamma < \|g_1g_2\|_d  <\|g_1\|_d+ \|g_2\|_d + \gamma. $$  
We deal with the case where $\|g_2\|_d< 4 \gamma$ similarly.

We now prove (ii). Fix $g_0$ in $N(\rho/4-\gamma)\setminus N(4\gamma)$. We will consider two cases, when $g_1$ is not in $N(4\gamma)$ and when $g_1$ is in $N(4\gamma)$. 
Suppose we are in the first case, that is $g_1 \notin N(4\gamma)$. As $\|g_1\|_d \leq \|g_2\|_d$, we also have $g_2 \notin N(4\gamma)$.
If both $g_1g_2$ and $g_2g_1$ are not in $N(4\gamma)$,  then the desired conclusion is a consequence of (i) and Proposition~\ref{prop: propertoesofsign}(iv, v). Within the first case, it remains to deal with the situations where $g_1g_2$ is in $N(4\gamma)$ or $g_2g_1$ is in $N(4\gamma)$. 

Since these two situations are similar, we may assume $g_1g_2$ is in $N(4\gamma)$. From (i), we have $s(g_1, g_2)=-1$ and $\|g_2\|_d - \|g_1\|_d$ is at most $5\gamma$. Therefore, $\|g_2g_1\|_d$ is in $I(6\gamma)$. By Proposition~\ref{prop: propertoesofsign}(iv), we have $s(g_0, g_1) = -s(g_0, g_2)$, and so
$$s(g_0, g_{1}) \|g_{1}\|_d +  s(g_0, g_2) \|g_{2}\|_d \in I(6\gamma).$$ Since both $ s(g_0, g_{1}g_{2}) \|g_{1}g_{2}\|_d$ and $ s(g_0, g_{2}g_{1}) \|g_{1}g_{2}\|_d$ are in $I(6\gamma)$,  they are both in
$s(g_0, g_{1}) \|g_{1}\|_d +  s(g_0, g_2) \|g_{2}\|_d + I(12\gamma)$ giving us the desired conclusion.

Continuing from the previous paragraph, we consider the second case when $g_1$ is in $N(4\gamma)$. If $g_2$ is in $N(16\gamma)$, then both $\|g_{1}g_{2}\|_d$ and  $ \|g_{2}g_{1}\|_d$ are in $I(25\gamma)$ by (i), and  the desired conclusion follows.  Now suppose $g_2$ is not in $N(16\gamma)$. Then from (i) and the fact that $g_1\in N(4\gamma)$, we get $g_1g_2$ and $g_2g_1$ are both not in $N(4\gamma)$. Note that $s(g_1g_2, g^{-1}_2)= -1$, because otherwise we get $$\|g_1\|_d \geq \|g_1g_2\|_d+\| g^{-1}_2\|_d -5 \gamma > 4\gamma.$$ A similar argument gives $s(g_2^{-1}, g_2g_1)=-1$. Hence, 
$s(g_1g_2, g_2)= s(g_2g_1, g_2)=1.$ By Proposition~\ref{prop: propertoesofsign}(v), we get $$s(g_0, g_2)= s(g_0, g_1g_2) = s(g_0, g_2g_1).$$ From (i), $\|g_1g_2\|_d$ and $\|g_2g_1\|
_d$ are both in $\|g_2\|_d+I(9\gamma)$. On the other hand, as $s(g_0, g_{1})=0$, we have
$s(g_0, g_{1}) \|g_{1}\|_d +  s(g_0, g_2) \|g_{2}\|_d = s(g_0, g_2) \|g_{2}\|_d$. The desired conclusion follows. 
\end{proof}

The next corollary will be important in the subsequent development.

\begin{corollary}{\label{cor: welldefinedoftotalweight}}
Suppose $d$ is $\gamma$-linear and $\gamma$-monotone,  $g_0$ and $g_0'$ are elements in $N(\rho/4-\gamma)\setminus N(4\gamma)$, and $(g_1, \ldots, g_n)$ is a sequence with $g_i \in N(\rho/4-\gamma)\setminus N(4\gamma)$ for $i \in \{1, \ldots, n\}$. Then
$$   \left|\sum^n_{i=1} s(g_0, g_i) \|g_i\|_d \right|  =  \left| \sum^n_{i=1} s(g'_0, g_i) \|g_i\|_d \right|.  $$
\end{corollary}

\begin{proof}
As $s(g_0, g_i) = s(g_0', g_i)=0$ whenever $\|g_i\|_d< 4\gamma$, we can reduce to the case where $\min_{1\leq i\leq n} \|g_i\|_d \geq 4 \gamma$. Using Proposition~\ref{prop: propertoesofsign}(iii) to replace $g_0$ with $g_0^{-1}$ if necessary, we can assume that $s(g_0, g_1) = s(g'_0, g_1)$. Then by Proposition~\ref{prop: propertoesofsign}(iii), $s(g_0, g_i) = s(g'_0, g_i)$ for all $i \in \{1, \ldots, n\}$. This gives us the desired conclusion.
\end{proof}

The following auxiliary lemma allows us to choose $g_0$ as in Corollary~\ref{cor: welldefinedoftotalweight}.

\begin{lemma} \label{lem: nonemptytodefine}
The set $N(\rho/4-\gamma)\setminus N(4\gamma)$ is not empty.
\end{lemma}

\begin{proof}
It suffices to show that $\mu_G(N(4\gamma))< \mu_G(N(\rho/4-\gamma)$. 
Since $\id$ is in $N(4\gamma)$, $N(4\gamma)$ is a nonempty open set and has $\mu_G(N(4\gamma))>0$. Therefore,  $N^2(4\gamma)$ and $N^4(4\gamma)$ are also open. By $\gamma$-linearity, we have 
$$N^2(4\gamma) \subseteq N_{9\gamma}\quad \text{and}\quad N^4(4\gamma) \subseteq N_{19\gamma}.$$ As $19\gamma<\rho$, we have $N^4(4\gamma) \neq G$. Using Corollary~\ref{cor: when a+b>G}, we get
$$\mu_G( N^2(4\gamma))\leq  2/3 \quad\text{and}\quad  \mu_G( N(4\gamma))<  1/3.$$ Hence, by Kemperman's inequality $\mu_G(N(4\gamma))< \mu_G(N^2(4\gamma)) \leq \mu_G(N(\rho/4-\gamma))$, which is the desired conclusion.
\end{proof}

Suppose $(g_1, \ldots, g_n)$ is a sequence of elements in $N(\rho/4-\gamma)\setminus N(4\gamma)$. We set
$$t(g_1, \ldots, g_n) = \left|\sum^n_{i=1} s(g_0, g_i) \|g_i\|_d \right|  $$
with $g_0$ is an arbitrary element in $N(\rho/4-\gamma)\setminus N(4\gamma)$, and call this the {\bf total weight} associated to $(g_1, \ldots, g_n)$. This is well-defined by Corollary~\ref{cor: welldefinedoftotalweight} and Lemma~\ref{lem: nonemptytodefine}.

\subsection{Almost linear pseudometrics: group homomorphisms onto tori}\label{sec: 7.3}
In this section, we will use the relative sign function and the total weight function defined in Section~\ref{sec: 7.2} to define a universally measurable multivalued group homomorphism onto $\TT$. We will then use a number or results in descriptive set theory and geometry to refine this into a continuous group homomorphism.

We keep the setting of Section~\ref{sec: 7.2}, and assume further that $G$ is compact. Let $s$ and $t$ be the relative sign function and the total weight function defined earlier. Set $\lambda= \rho/36$, and $N[\lambda] = \{g \in G : \|g\|_d \leq \lambda\}$. The set $N[\lambda]$ is compact, and hence measurable. Moreover, Lemma~\ref{lem: Estimationusinsignchange} is applicable when $g_0$ is an arbitrary element in $N(\rho/4-\gamma)\setminus N(4\gamma)$, and $g_1$ are $g_2$ are in $N[\lambda]$.

A sequence $(g_1, \ldots, g_n)$ of elements in $G$ is a {\bf $\lambda$-sequence} if $g_i$ is in $N[\lambda]$ for all $i \in \{1, \ldots, n\}$. We are interested in expressing an arbitrary $g$ of $G$ as a product of a $\lambda$-sequence where all components are ``in the same direction''. The following notion captures that idea. A $\lambda$-sequence $(g_1, \ldots, g_n)$  is {\bf irreducible} if for all $2 \leq j \leq 4$, we have $$ g_{i+1} \cdots g_{i+j} \notin N(\lambda).$$    
 A {\bf concatenation} of a $\lambda$-sequence $(g_1, \ldots, g_n)$ is a $\lambda$-sequence $(h_1, \ldots, h_m)$ such that there are  $0 =k_0< k_1< \cdots< k_m =n$ with 
 $$h_i = g_{k_{i-1}+1} \cdots g_{k_i} \text{ for } i \in \{1, \ldots, m\}.$$ The next lemma allows us to reduce an arbitrary sequence to irreducible $\lambda$-sequences via concatenation.

\begin{lemma} \label{lem: concatenationcomparison}
Suppose $d$ is $\gamma$-linear and $\gamma$-monotone, and $(g_1, \ldots, g_n)$ is a $\lambda$-sequence. Then $(g_1, \ldots, g_n)$ has an irreducible  concatenation $(g'_1, \ldots, g'_m)$ with
$$ t(g'_1, \ldots, g'_m) \in t(g_1, \ldots, g_n) + I(25(n-m)\gamma). $$
\end{lemma}

\begin{proof}
The statement is immediate when $n=1$. Using induction, suppose we have proven the statement for all smaller values of $n$.
If $(g_1, \ldots, g_n)$ is irreducible, we are done. Consider the case where $g_{i+1}g_{i+2}$ is in $N(\lambda)$ for some $0 \leq i \leq n-2$. Fix $g_0$ in $N(\lambda/4-\gamma)\setminus N(4\gamma)$. Using Lemma~\ref{lem: Estimationusinsignchange}(ii)
$$ s(g_0, g_{i+1}g_{i+2}) \|g_{i+1}g_{i+2}\|_d \in s(g_0, g_{i+1}) \|g_{i+1}\|_d +  s(g_0, g_{i+2}) \|g_{i+2}\|_d + I(25\gamma).  $$
From here, we get the desired conclusion. The cases where either $g_{i+1}g_{i+2}g_{i+3}$ for some $0 \leq i \leq n-3$ or $g_{i+1}g_{i+2}g_{i+3}g_{i+4}$ is in $N(\lambda)$ for some $0 \leq i \leq n-4$ can be dealt with similarly.
\end{proof}

The following lemma makes the earlier intuition of ``in the same direction'' precise:  

\begin{lemma} \label{lem: almostmonotonicityofirredseq}
Suppose $d$ is $\gamma$-linear and $\gamma$-monotone, $g_0$ is in $ N(\rho/4-\gamma)\setminus N(4\gamma)$, and $(g_1, \ldots, g_n)$ is an irreducible $\lambda$-sequence. Then for all $i$, $i'$, $j$, and $j'$ such that  $2\leq j, j' \leq 4$, $0 \leq i \leq n-j$, and $0 \leq i' \leq n-j'$, we have
$$ s( g_0, g_{i+1}\cdots g_{i+j}) = s(g_0,  g_{i'+1}\cdots g_{i'+j'}).  $$
\end{lemma}
\begin{proof}
It suffices to show for fixed $i, j$ with $0 \leq i \leq n-j-1$ and  $2 \leq j \leq 3$ that $$s( g_0, g_{i+1}\cdots g_{i+j}) = s(g_0,  g_{i+1}\cdots g_{i+j+1}).$$ Note that both $g_{i+1}\cdots g_{i+j}$ and $g_{i+1}\cdots g_{i+j+1}$ are in $N(\rho/4-\gamma)\setminus N(4\gamma)$. Hence, applying Proposition~\ref{prop: propertoesofsign}(iv), we reduce the problem to showing  $$s( g^{-1}_{i+j}\cdots g_{i+1}^{-1}, g_{i+1}\cdots g_{i+j+1)}) =-1  .$$ This is the case because otherwise, $\|g_{i+j+1}\|_d \geq 2 \lambda - \gamma > \lambda$, a contradiction.    
\end{proof}

We now get a lower bound for the total distance of an irreducible $\lambda$-sequence:

\begin{corollary} \label{Cor: Totallengthofirreduciblesequence}
Suppose $d$ is $\gamma$-linear and $\gamma$-monotone, and $(g_1, \ldots, g_n)$ is an irreducible $\lambda$-sequence. Then
$$   t(g_1, \ldots, g_n) > n\lambda/4. $$
\end{corollary}
\begin{proof}
If $n=2k$, let $h_i = g_{2i-1}g_{2i}$ for $i \in \{1, \ldots, k\}$. If $n=2k+1$, let $h_i = g_{2i-1}g_{2i}$ for $i \in \{1, \ldots, k-1\}$,   and $h_k = g_{2n-1}g_{2n}g_{2n+1}$. From Lemma~\ref{lem: Estimationusinsignchange},  we have 
\begin{equation}\label{eq: sequence estimate}
   t(h_1, \ldots, h_k) \in t(g_1, \ldots, g_n) + I(25(n-k)\gamma). 
\end{equation}
As $(g_1, \ldots, g_n)$ is irreducible, $h_i$ is in $N(3\lambda)\setminus N(\lambda)$ for $i \in \{1, \ldots, k\}$.
By Lemma~\ref{lem: almostmonotonicityofirredseq}, we get $s(g_0,h_i) = s(g_0, h_j)$ for all $i$ and $j$ in $i \in \{1, \ldots, k\}$. Thus, by the definition of the total weight again, 
$
t(h_1, \ldots, h_k)> n\lambda/3. 
$ Combining with the assumption on $\lambda$ and \eqref{eq: sequence estimate}, we get $t(g_1, \ldots, g_n)>n\lambda/3-11n\gamma > n\lambda/4 $. \end{proof}

  When $(g_1.\dots,g_n)$ is an irreducible $\lambda$-sequence,  $g_1\cdots g_m$ is intuitively closer to $g_0$ than $g_1\cdots g_{m+k}$ for some positive $k$. However, as $G$ is compact, the sequence may ``return back'' to $\id$ when $n$ is large. The next proposition provides a lower bound estimate on such $n$.

\begin{lemma}[Monitor lemma]\label{prop: lower bound on n}
Suppose $d$ is $\gamma$-linear and $\gamma$-monotone, and $(g_1, \ldots, g_n)$ is an irreducible $\lambda$-sequence with $g_1 \cdots g_n =\id$. Then $n \geq 1/ \mu_G(N(4 \lambda)).$
\end{lemma}

\begin{proof}
Let $m>0$. For convenience, when $m>n$ we write $g_m$ to denote the element $g_i$ with $i\leq n$ and $i\equiv m\pmod n$.  Define
\[
N^{(m)}(4 \lambda) = \{ g\in G \mid d( g,  g_1\cdots g_{m})< 4\lambda\}.
\]
Note that we have $N^{(m)}(4\lambda) = N^{(m')}(4\lambda) $ when $m \equiv m' \pmod{n}$. By invariance of $d$ and $\mu_G$, clearly $\mu_G(N^{(m)}(4 \lambda)) = \mu_G( N(4\lambda))$ for all $m$. We also write $N^{(0)}(4\lambda)=N(4\lambda)$. We will show that 
\[
G = \bigcup_{m\in\ZZ} N^{(m)}(4\lambda) =\bigcup_{m =0}^{n-1} N^{(m)}(4\lambda),
\]
which yields the desired conclusion. 

As $g_1 \cdots g_n =\id$, we have $\id$ is in $N^{(0)}(2\lambda)$, and hence in $\bigcup_{m \in\ZZ} N^{(m)}(4\lambda)$. As every element in $G$ can be written as a product of finitely many elements in $N(\lambda)$, it suffices to show for every $g \in \bigcup_{m \in \ZZ} N^{(m)}(4\lambda)$ and $g' = gh$ with $h \in N(\lambda)$ that $g'$ is in $\bigcup_{m \in \ZZ} N^{(m)}(4\lambda)$. The desired conclusion then follows from the induction on the number of translations in $N(\lambda)$.

Fix $m$ which minimizes $d(g, g_1 \ldots g_m)$. We claim that $d(g, g_1 \ldots g_m)< 2\lambda+\gamma$. This claim gives us the desired conclusion because we then have $d(g', g_1 \ldots g_m) < 3\lambda+ 2\gamma < 4\lambda$ by the $\gamma$-linearity of $d$.   

We now prove the claim that  $d(g, g_1 \ldots g_m)< 2\lambda+\gamma$. Suppose to the contrary that $d(g, g_1 \ldots g_m)\geq 2 \lambda+\gamma$. Let $u=(g_1\cdots g_m)^{-1}g$. Now by Lemma~\ref{lem: almostmonotonicityofirredseq} we have either $s(u, g_{m+1}g_{m+2})=1$, or $s(u, g_m^{-1}g_{m-1}^{-1})=1$. Suppose it is the former, since the latter case can be proved similarly. Then $s(u, g_{m+2}^{-1}g_{m+1}^{-1}) =-1$. Note that $g = g_1\cdots g_m u = (g_1 \cdots g_{m+2}) g^{-1}_{m+2}g^{-1}_{m+1}u$. By the definition of $u$, and the linearity of $d$, we have $\|u\|_d\geq 2\lambda+\gamma>\|g_{m+1}g_{m+2}\|_d$, therefore by the irreducibility we have
\begin{align*}
d(g, g_1\cdots g_{m+2}) &= \|g^{-1}_{m+2}g^{-1}_{m+1}u\|_d\\
&< \|u\|_d-\|g^{-1}_{m+2}g^{-1}_{m+1}\|_d +\gamma< \|u\|_d-\lambda +\gamma< \|u\|_d.
\end{align*}
This contradicts our choice of $m$ having $d(g, g_1, \ldots, g_m)$ minimized.
\end{proof}

In the later proofs of this section, we will fix an irreducible $\lambda$ sequence $g_1\cdots g_n=\id$ to serve as ``monitors''. As each element of $G$ will be captured by one of the monitors, this will help us to bound the error terms in the final almost homomorphism we obtained from the pseudometric. 

 Suppose $d$ is $\gamma$-linear and $\gamma$-monotone, and $N(\rho/4-\gamma)\setminus N(4\gamma) \neq \varnothing$. Define the {\bf returning weight} of $d$ to be
$$ \omega = \inf\{ t(g_1, \ldots, g_n) : ( g_1, \ldots, g_n) \text{ is an irreducible } \lambda\text{-sequence with } g_1\cdots g_n=\id \}.  $$
The following corollary translate Lemma~\ref{prop: lower bound on n} to a bound on such $\omega$:

 \begin{corollary}\label{cor: a_lambdanew}
 Suppose $d$ is $\gamma$-linear and $\gamma$-monotone, and $\omega$ is the returning weight of $d$. Then we have the following:
\begin{enumerate}[\rm (i)]
    \item $\lambda/ 4\mu_G(N(4 \lambda)) \leq \omega \leq 4\lambda/ \mu_G(N(\lambda)).$
    \item There is  an irreducible $\lambda$-sequence $(g_1, \ldots, g_n)$ such that $\omega = t(g_1, \ldots, g_n)$ and $ 1/ \mu_G(N(4 \lambda)) \leq n \leq 4/ \mu_G(N(\lambda))$.
\end{enumerate}
 \end{corollary}

\begin{proof}
Note that each irreducible $\lambda$-sequence $(g_1, \ldots, g_n)$  has $n \geq 1/ \mu_G(N(4\lambda))$ by using Lemma~\ref{prop: lower bound on n}.
Hence,  by Corollary~\ref{Cor: Totallengthofirreduciblesequence}, we get 
$
\omega \geq \lambda/ 4\mu_G(N(4 \lambda)) .
$
On the other hand, by Corollary~\ref{cor: when a+b>G}, $G= (N(\lambda))^k$ for all $k> 1/\mu_G(N(\lambda))$. Hence, with Lemma~\ref{lem: concatenationcomparison}, there is an irreducible $\lambda$-sequence $(g_1, \ldots, g_n)$ with $g_1\cdots g_n =\id$ and
$n \leq  4/ \mu_G(N(\lambda)) $. From the definition of $t$, we get $\omega \leq  4\lambda/ \mu_G(N(\lambda))$.

Now if an irreducible $\lambda$-sequence $(g_1, \ldots, g_n)$ has $n > 4/ \mu_G(N(\lambda))$, then by (i) and Corollary~\ref{Cor: Totallengthofirreduciblesequence}, 
\[t(g_1, \ldots, g_n) > \frac{4\lambda}{\mu_G(N(\lambda))}\geq \omega,
\]
a contradiction. Therefore, we have
\begin{align*}
\omega  = \inf\{ t(g_1, \ldots, g_n) : &\,( g_1, \ldots g_n) \text{ is an irreducible } \lambda\text{-sequence with }\\
&\, n \leq  4/ \mu_G(N(\lambda))\text{ and } g_1\cdots g_n=\id \}.
\end{align*}
For fixed $n$ the set of irreducible $\lambda$-sequence of length $n$ is closed under taking limit. Hence, we obtain desired $(g_1, \ldots, g_n) $ using the Bozalno--Wierstrass Theorem.
\end{proof}

  The next lemma allows us to convert between $\mu_G(N(\lambda))$ and $\mu_G(N(4\lambda))$:
\begin{lemma}\label{lem: N lambda}
Suppose $d$ is $\gamma$-linear and $\gamma$-monotone. Then $$\mu_G(N(4\lambda))\leq 16\mu_G(N(\lambda)).$$ 
\end{lemma}
\begin{proof}
Fix $h\in N(\lambda)\setminus N(\lambda/2-\gamma)$. Such $h$ exists since
by $\gamma$-monotonicity we have $N^2(\lambda/2-\gamma)\subseteq N(\lambda)$, and by Kemperman's inequality, $\mu_G(N(\lambda)>2\mu_G( N(\lambda/2-\gamma))$. Let $g$ be an arbitrary element in $N(4\lambda)$, and  assume first $s(g,h)=1$. Let $k\geq0$ be an integer, and define $g_k=g(h^{-1})^k$. Then by Proposition~\ref{prop: propertoesofsign} and Lemma~\ref{lem: Estimationusinsignchange}, 
\[
\|g_k\|_d\in \|g\|_d-k\|h\|_d+I(5k\gamma) \text{ for } k <\|g\|_d/\|h\|_d. 
\]
Hence,  there is $k< 8$ such that $g_k\in N(\lambda)$.  When $s(g,h)=-1$, one can similarly construct $g'_k$ as $gh^k$, and find $k< 8$ such that $g'_k\in N(\lambda)$. Therefore 
\[
N(4\lambda)\subseteq \Big(\bigcup_{i=0}^7 N(\lambda)h^{i}\Big)\cup \Big(\bigcup_{j=0}^7 N(\lambda){h^{-j}}\Big).
\]
Thus, $\mu_G(N(4\lambda))\leq 16\mu_G(N(\lambda))$.
\end{proof}

The following proposition implicitly establish that $t$ defines an approximate multivalued group homomorphism from $G$ to $\RR/\omega\ZZ$.
\begin{proposition} \label{lem: estimationoftotallength}
Suppose $d$ is $\gamma$-linear and $\gamma$-monotone,  $\omega$ is the returning weight of $d$, and $(g_1, \ldots, g_n)$ is a $\lambda$-sequence with 
$g_1 \ldots g_n =\id$ and $n \leq  4/ \mu_G(N(\lambda)) $. 
Then
$$t(g_1, \ldots, g_n) \in \omega\ZZ + I(\omega/400).$$
\end{proposition}

 \begin{proof}
 Let $g_0$ be in $N(\rho/4-\gamma)\setminus N(4\gamma) $. Using Proposition~\ref{prop: propertoesofsign}(iii) to replace $g_0$ with $g_0^{-1}$ if necessary, we can assume that
 $$t(g_1,\dots,g_n) = \sum_{i=1}^ns(g_0, g_i) \|g_i\|_d.$$
As $n\leq 4/ \mu_G(N(\lambda))$,  we have
 $
 t(g_1, \ldots,\allowbreak g_n)\leq  4\lambda/ \mu_G(N(\lambda)).
 $
From Corollary~\ref{cor: a_lambdanew}(i), we have $\lambda \leq 4\omega\mu_G(N(4\lambda))$. Hence, 
\begin{equation}\label{eq: t(g_1,..,g_n)}
t(g_1, \ldots, g_n) < \frac{16\omega \mu_G(N(4\lambda))}{ \mu_G(N(\lambda))}.
\end{equation}
Using Corollary~\ref{cor: a_lambdanew} again, we obtain an irreducible $\lambda$-sequence $(h_1, \ldots, h_m)$   such that $t(h_1, \ldots, h_m)=\omega$ and $ 1/ \mu_G(N(4 \lambda)) \leq m \leq 4/ \mu_G(N(\lambda))$. Using Proposition~\ref{prop: propertoesofsign}(iii) to replace $(h_1, \ldots, h_m)$ with $(h_m^{-1}, \ldots, h_1^{-1})$ if necessary, we can assume that 
$$t(h_1, \ldots, h_m)=-\sum_{i=1}^ns(g_0, h_i) \|h_i\|_d.$$

We now define a sequence $(g'_1,\ldots,g_{n'}')$ such that
\begin{enumerate}
    \item $n'=n+km$ for some integer $k\geq0$.
    \item $g'_i=g_i$ for $1\leq i\leq n$. 
    \item For $i\geq n+1$, $g'_i=h_{j}$ with $j\equiv i-n\pmod m$.
\end{enumerate}
From the definition of the total weight, for $k< t(g_1, \ldots, g_n)/\omega$, we have
\[
t(g'_1,\ldots,g'_{n'})=t(g_1,\ldots,g_n)-k\omega.
\]
We choose an integer $k< t(g_1, \ldots, g_n)/\omega+1$ such that $|t(g'_1,\ldots,g'_{n'})|\leq\omega/2$. Then by \eqref{eq: t(g_1,..,g_n)}, and the trivial bound $\mu_G(N(\lambda))< \mu_G(N(4\lambda))$, we have
\begin{equation*}\label{eq: bound for n'}
n'<n+km< \frac{4}{\mu_G(N(\lambda))}+\left(\frac{ 16\mu_G(N(4\lambda))}{\mu_G(N(\lambda))}+1\right) \frac{4}{\mu_G(N(\lambda))} \leq \frac{ 72\mu_G(N(4\lambda))}{\mu_G^2(N(\lambda))}. 
\end{equation*}

  Note that $(g'_1, \ldots, g'_{n'})$ is a $\lambda$-sequence with 
$g'_1 \ldots g'_{n'} =\id$. We assume further that $0\leq t(g'_1, \ldots, g'_{n'}) <  \omega/2$ as the other case can be dealt with similarly.
 Obtain an irreducible concatenation  $(h'_1, \ldots, h'_{m'})$ of $(g'_1, \ldots, g'_{n'})$. 
 From Lemma~\ref{lem: concatenationcomparison}, we get
 \begin{equation*}
 t(h'_1, \ldots, h'_{m'})< t(g'_1, \ldots, g'_{n'}) +25(n'-m')\gamma
 \leq \frac{\omega}{2}+ \frac{ 1800\mu_G(N(4\lambda))\gamma}{\mu_G^2(N(\lambda))}.
  \end{equation*}
  Using Corollary~\ref{cor: a_lambdanew}(i) and Lemma~\ref{lem: N lambda}, we have
  \[
  \frac{ 1800\mu_G(N(4\lambda))\gamma}{\mu_G^2(N(\lambda))} \leq \frac{ 1800\mu_G(N(4\lambda))\gamma}{\mu^2_G(N(4\lambda))/16^2}<  \frac{5\cdot 10^5\gamma}{N(4\lambda)} \leq  5\cdot 10^5\gamma \frac{4\omega}{\lambda}.
  \]
  As  $\gamma< 10^{-8}\rho$, and $\lambda =\rho/16-\gamma$, one can check that the lass expression is at most $\omega/400$. Hence, 
  $t(h'_1, \ldots, h'_{m'})< \omega$. From the definition of $\omega$, we must have $t(h'_1, \ldots, h'_{m'}) =0$. Thus by Lemma~\ref{lem: concatenationcomparison} again, 
\[
t(g'_1, \ldots, g'_n) \in I(25n'\gamma)\subseteq I(\omega/400), 
\]
which completes the proof. 
  \end{proof}

Recall that a {\bf Polish space} is a topological space which is  separable and completely metrizable. In particular, the underlying topological space of any connected compact Lie group is a Polish space. Let $X$ be a Polish space. A subset $B$ of $X$ is {\bf Borel} if $B$ can be formed from open subsets of $X$   (equivalently, closed subsets of $X$) through taking countable unions, taking countable intersections, and taking complement. A function $f: X\to Y$ between Polish space is Borel, if the inverse image of any Borel subset of $Y$ is Borel. A subset $A$ of $X$ is  {\bf analytic} if it is the continuous image of another Polish space $Y$. Below are some standard facts about these notions; see~\cite{Kechris} for details. 

\begin{fact} \label{fact: analyticsets}
Suppose $X, Y$ are Polish spaces, and $f: X\to Y$ is continuous. We have the following:
\begin{enumerate}[\rm (i)]
    \item Every Borel subset of $X$ is analytic.
    \item  Equipping $X\times Y$ with the product topology, the graph of a Borel function from $X$ to $Y$ is analytic.
    \item The collection of analytic subsets of $X$ is closed under taking countable unions, taking intersections and cartesian products.
    \item Images of analytic subsets in $X$ under $f$ is analytic.
\end{enumerate}

\end{fact}

Given $x\in \RR$, let $\|x\|_\TT$ be the distance of $x$ to the nearest element in $\ZZ$. We now obtain a consequence of Lemma~\ref{lem: estimationoftotallength}.
 
 \begin{corollary}[Analytic multivalued almost homomorphism] \label{cor: analyticmulitvalued}
 There is an analytic subset $\Gamma$ of $G \times \TT$ satisfying the following properties:
  \begin{enumerate}[\rm (i)]
     \item The projection of $\Gamma$ on $G$ is surjective. 
     \item $(\id, \mathrm{id}_{\RR/\omega\ZZ})$ is in $\Gamma$.
      \item If  $g_1, g_2\in G$ and $t_1, t_2, t_3\in \RR$ are such that  $(g_1, t_1/\omega+\ZZ)$, $(g_2, t_2/\omega+\ZZ)$, and $(g_1g_2, t_3/\omega+\ZZ)$ in $\Gamma$, then 
      $$\|(t_1+t_2 -t_3)/\omega\|_\TT< 1/400.$$
      \item There are $g_1, g_2\in G$ and $t_1, t_2 \in \RR$ with that  $(g_1, t_1/\omega+\ZZ), (g_2, t_2/\omega+\ZZ) \in \Gamma$ and $\|(t_1-t_2)/\omega\|_\TT> 1/3$.
  \end{enumerate}
 \end{corollary}
 \begin{proof}
   Let $\Gamma$ consist of $(g,t/\omega+\ZZ) \in G \times \TT$ with $g\in G$ and $t\in \RR$ such that there is  $n\leq 1/\mu_G(N(\lambda))+1$ and an irreducible $\lambda$-sequence $(g_1, \ldots, g_n)$ satisfying 
   \[
   g= g_1 \cdots g_n \quad\text{and}\quad t = t(g_1, \ldots, g_n).
   \]
   Note that the relative sign function $s: G\times G \to \RR$ is Borel, the set $N[\gamma]$ is compact, and the function $x\to \| x\|_d$ is continuous. Hence, by Fact~\ref{fact: analyticsets}(i,ii), the function $(g_1, \ldots, g_n) \mapsto t(g_1, \ldots, g_n)$ is Borel, and its graph is analytic.
 For each $n$, by Fact~\ref{fact: analyticsets}(iii)
  \begin{align*}
  \widetilde{\Gamma}_n:= \{ (g,t, g_1, \ldots, g_n) &\in G \times \RR \times G^n :   \\
  &\|g_i\|_d < \lambda \text{ for } 1 \leq i \leq n, g=g_1\cdots g_n, t=t(g_1, \ldots, g_n)\} 
  \end{align*}
is analytic. Let ${\Gamma}_n$ be the image of $\widetilde{\Gamma}_n$ under the continuous map
$$ (g, t, g_1, \ldots, g_n) \mapsto (g, t/\omega+\ZZ).$$ Then by Fact~\ref{fact: analyticsets}(iv), ${\Gamma}_n$ is analytic. Finally,  $\Gamma = \bigcup_{n< 1/\mu_G(N(\lambda))+1}\Gamma_n$ is analytic by Fact~\ref{fact: analyticsets}(iii). 

We now verify that $\Gamma$ satisfies the desired properties. It is easy to see that (i) and (ii) are immediately from the construction, and (iii) is a consequence of  Lemma~\ref{lem: estimationoftotallength}. We now prove (iv). Using Corollary~\ref{cor: a_lambdanew}, we obtain an irreducible $\lambda$-sequence $(g_1, \ldots, g_n)$ with $t(g_1, \ldots,  g_n) = \omega$ and $n < 4/\mu_G(N(\lambda))$.  Note that 
\[
|t(g_1, \ldots, g_{k+1})-t(g_1, \ldots, g_k)|\leq \lambda.
\]
Hence, there must be $k \in \{1, \ldots n\}$ such that $\omega/3<  t(g_1, \ldots, g_k)<2\omega/3 $. Set $t_1 =  0$ and $t_2 =  t(g_1, \ldots, g_k) $ for such $k$. It is then easy to see that $\|(t_1-t_2)/\omega\|_\TT> 1/3$.
 \end{proof}

  To construct a group homomorphism from $G$ to $\TT$, we will need three more facts. Recall the following measurable selection theorem from descriptive set theory; see~\cite[Theorem~6.9.3]{Measuretheory}.
 
\begin{fact}[Kuratowski and Ryll--Nardzewski measurable selection theorem] \label{KRN}
Let $(X, \mathscr{A})$ be a measurable space, $Y$ a complete separable metric space equipped with the usual Borel $\sigma$-algebra, and $F$ a function on  $X$ with values in the set of nonempty closed subsets
of $Y$. Suppose that for every open $U \subseteq Y$, we have
$$ \{ a \in X: F(a) \cap U \neq \varnothing\} \in \mathscr{A}.$$
Then $F$ has a selection $f: X \to Y$ which is measurable with respect to $\mathscr{A}$.
 \end{fact}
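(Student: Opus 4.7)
The plan is to realize the selector as a uniform limit of a sequence of measurable functions built by successive refinement against a countable dense set. Fix a compatible complete metric $d$ on $Y$ and a countable dense subset $\{y_k\}_{k \geq 1}$. Starting from $F_0 := F$, I would construct inductively, for each $n \geq 1$, a measurable function $f_n : X \to Y$ and a multivalued map $F_n$ with nonempty closed values satisfying $F_n(x) \subseteq F(x)$, $F_n(x) \subseteq \overline{B}(f_n(x), 2^{-n})$, and the open-preimage condition $\{x : F_n(x) \cap U \neq \varnothing\} \in \mathscr{A}$ for every open $U \subseteq Y$.

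For the inductive step, assume $F_n$ satisfies the open-preimage condition, cover $Y$ by the open balls $B_k := B(y_k, 2^{-n-1})$, and let $N_{n+1}(x)$ be the least index $k$ with $F_n(x) \cap B_k \neq \varnothing$. Then
\[
\{N_{n+1} = k\} = \{x : F_n(x) \cap B_k \neq \varnothing\} \setminus \bigcup_{j<k}\{x : F_n(x) \cap B_j \neq \varnothing\} \in \mathscr{A},
\]
so $N_{n+1}$ is $\mathscr{A}$-measurable and $f_{n+1}(x) := y_{N_{n+1}(x)}$ is a measurable function. Put $F_{n+1}(x) := \overline{F_n(x) \cap B(y_{N_{n+1}(x)}, 2^{-n-1})}$, which is nonempty, closed, contained in $F(x)$, and contained in $\overline{B}(f_{n+1}(x), 2^{-n-1})$. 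To verify the open-preimage condition for $F_{n+1}$, use that for any set $C$ and any open set $V$ one has $\overline{C} \cap V \neq \varnothing$ if and only if $C \cap V \neq \varnothing$; partitioning $X$ by the value of $N_{n+1}$ and applying this equivalence on each piece reduces the condition for $F_{n+1}$ on the open set $U$ to the condition for $F_n$ on the open set $U \cap B_k$, which is in $\mathscr{A}$ by the inductive hypothesis.

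The triangle inequality gives $d(f_n(x), f_{n+1}(x)) \leq 2^{-n} + 2^{-n-1} < 2^{-n+1}$ uniformly in $x$, so $(f_n)$ is uniformly Cauchy and converges to a function $f : X \to Y$ which is $\mathscr{A}$-measurable as a pointwise limit of measurable functions into a Polish space. For each $x$ the closed sets $F_n(x) \subseteq F(x)$ have diameters tending to $0$, so by completeness their intersection is a single point which equals $f(x)$ and lies in $F(x)$, giving the desired measurable selector. The main technical point is the inductive preservation of the open-preimage condition on the refined multivalued maps $F_n$, since intersecting with a closed ball does not directly preserve open-preimage measurability; the closure trick above handles this cleanly, and every remaining step (the Cauchy property, identification of the limit inside $F(x)$) is routine.
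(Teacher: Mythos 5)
The paper does not prove this statement: it is quoted as a \emph{Fact} with a citation to the literature (Bogachev, \emph{Measure Theory}, Theorem 6.9.3), so there is no in-paper argument to compare against. Your proof is the standard successive-refinement proof of the Kuratowski--Ryll-Nardzewski theorem and it is correct and complete: the sets $\{N_{n+1}=k\}$ are measurable exactly as you write, the closure trick $\overline{C}\cap V\neq\varnothing \iff C\cap V\neq\varnothing$ for open $V$ does legitimately reduce the open-preimage condition for $F_{n+1}$ on $U$ to that of $F_n$ on the open set $U\cap B_k$ piecewise on the partition by $N_{n+1}$, the maps $F_n$ are nested with diameters at most $2^{-n+1}$, and the uniform limit $f$ is measurable and lands in $\bigcap_n F_n(x)\subseteq F(x)$ by completeness.
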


  A  {\bf Polish group} is a topological group whose underlying space is a Polish space. In particular, Lie groups are Polish groups. 
A subset  $A$ of a Polish space $X$ is {\bf universally measurable} if $A$ is measurable with respect to every complete probability measure on $X$ for which every Borel set is measurable. In particular, every analytic set is universally measurable; see ~\cite{Rosendal} for details. A map $f: X \to Y$ between Polish spaces is {\bf universally measurable} if inverse images of open sets are universally measurable.  We have the following recent result from descriptive set theory by~\cite{Rosendal}; in fact, we will only apply it to Lie groups so a special case which follows from an earlier result by Weil~\cite[page 50]{Weils} suffices.
 
 \begin{fact}[Rosendal] \label{automaticcontinuity}
Suppose $G$ and $H$ are Polish groups, $f: G \to H$ is a  universally measurable group homomorphism. Then $f$  is continuous.
\end{fact}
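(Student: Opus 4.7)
The plan is to prove continuity at the identity, which suffices for any homomorphism of topological groups. Fix an open neighborhood $U$ of $e_H$ in $H$ and choose an open symmetric neighborhood $V$ of $e_H$ with $VV^{-1} \subseteq U$. Setting $A := f^{-1}(V)$, a universally measurable subset of $G$, it suffices to show that $AA^{-1}$ contains a neighborhood of $e_G$, since $AA^{-1} \subseteq f^{-1}(VV^{-1}) \subseteq f^{-1}(U)$. The strategy is then to invoke a Steinhaus-type principle: in a locally compact group, a measurable set of positive Haar measure has $AA^{-1}$ containing a neighborhood of the identity, so for the Lie group case that the paper needs, the classical Steinhaus theorem will apply once one shows that $A$ has positive Haar measure.

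To verify this, I would exploit separability of $H$: the subset $f(G)$ is a separable metric subspace of the Polish space $H$, so it admits a countable dense subset $\{h_n\}_{n \geq 1}$. Pick $g_n \in f^{-1}(h_n)$ for each $n$. Given any $y \in f(G)$, the open set $yV^{-1}$ meets $f(G)$ at $y$, so by density some $h_n$ lies in $yV^{-1} \cap f(G)$, giving $y \in h_n V$. Hence $f(G) \subseteq \bigcup_n h_n V$, and since $f$ is a homomorphism, $f^{-1}(h_n V) = g_n A$, whence $G = \bigcup_n g_n A$. A Lie group is second countable and locally compact, so its Haar measure is $\sigma$-finite and strictly positive on nonempty open sets, and universal measurability of $A$ makes it measurable with respect to the completion of Haar measure on compact subsets; the countable cover then forces some $g_n A$, and hence $A$ by left-invariance, to have positive Haar measure. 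Steinhaus' theorem then produces the required neighborhood inside $AA^{-1}$.

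The main obstacle for the full Polish-group statement is the absence of a Haar measure, which is why one must replace ``positive Haar measure'' with Christensen's notion of non-Haar-null sets and invoke a Christensen--Steinhaus theorem. The covering argument above still shows that $A$ is not Haar null, but the passage from non-Haar-null to a neighborhood inside $AA^{-1}$ requires the genuinely infinite-dimensional descriptive-set-theoretic machinery developed by Christensen and refined by Rosendal; this is the part I would not attempt to reprove and would simply cite.
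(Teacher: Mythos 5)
The paper does not prove this statement at all: it is imported as a black-box ``Fact'' with a citation to Rosendal, together with the remark that only the Lie-group case is needed and that this special case already follows from Weil. So there is no proof in the paper to compare against line by line. Your argument for the locally compact case is correct and is exactly Weil's classical argument: continuity at the identity suffices, $f^{-1}(V)$ is universally measurable, separability of $f(G)$ gives $G=\bigcup_n g_n f^{-1}(V)$, hence $f^{-1}(V)$ has positive Haar measure, and Steinhaus gives a neighborhood of $\id$ inside $f^{-1}(V)f^{-1}(V)^{-1}\subseteq f^{-1}(U)$. Since the paper only ever applies the Fact to (quotients that are) Lie groups, this part of your proposal fully covers what the paper actually uses.

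Your description of the general Polish case, however, misidentifies the ingredient you would need to cite. The Steinhaus-type principle ``universally measurable and not Haar null implies $AA^{-1}$ is a neighborhood of the identity'' is Christensen's theorem for \emph{abelian} Polish groups; for general non-locally-compact Polish groups it is known to fail (Solecki exhibited Polish groups with Borel non-Haar-null sets $A$ for which $AA^{-1}$ has empty interior). This failure is precisely why the general automatic-continuity problem stayed open for decades. Rosendal's proof does not pass through non-Haar-nullness at all; its key input is the countable syndeticity of $f^{-1}(V)$ — which your covering argument does establish — combined with a genuinely different and much more delicate argument showing that a countably syndetic, symmetric, universally measurable set has some bounded power containing a neighborhood of the identity. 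Since you defer this step to a citation anyway, and the paper defers the entire Fact to a citation, this does not invalidate your proposal for the paper's purposes, but the route you sketch for the full statement would not go through as written.
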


   Finally, we need the following theorem from geometry by Grove, Karcher, and Ruh~\cite{almosthomo} and independently by Kazhdan~\cite{almosthomo2}, that in a compact Lie groups an almost homomorphism is always close to a homomorphism uniformly. We remark that the result is not true for general compact topological groups, as a counterexample is given in~\cite{almosthomonottrueingeneral}.

\begin{fact}[Grove--Karcher--Ruh; Kazhdan]\label{fact:almosthomo}
Let $G,H$ be compact Lie groups. There is a constant $c$ only depending on $H$, such that for every real number $q$ in $[0,c]$, if $\pi: G\to H$ is a  $q$-almost homomorphism, then there is a  homomorphism $\chi:G\to H$ which is $1.36q$-close to $\pi$. Moreover, if $\pi$ is universally measurable, then $\chi$ is universally measurable. When $H=\TT$, we can take $c=\pi/6$. 
\end{fact}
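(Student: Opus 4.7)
The plan is an averaging argument: for each $g \in G$, the values $\phi_g(h) := \pi(gh)\pi(h)^{-1}$ all lie within distance $q$ of $\pi(g)$ as $h$ varies over $G$, so they cluster in a small neighborhood, and taking a suitable ``mean'' over $h$ with respect to Haar measure $\mu_G$ should yield an element $\chi(g)$ close to $\pi(g)$ that, thanks to the right-invariance of $\mu_G$, satisfies the exact homomorphism equation.

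First I would dispatch the case $H = \TT$, which is the only one used in the paper and admits the cleanest formula. For $q < 1/2$ one sets
\[
\chi(g) := \int_G \bigl(\pi(gh) - \pi(h)\bigr)\, d\mu_G(h),
\]
where each difference is interpreted via its canonical lift in $(-1/2,1/2]$; this is legitimate because the $q$-almost homomorphism hypothesis forces $\pi(gh) - \pi(h)$ to lie within $q$ of $\pi(g)$. The bound $\|\chi(g) - \pi(g)\|_\TT \leq q$ is immediate from the triangle inequality under the integral, so closeness holds already with constant $1 \leq 1.36$. For the homomorphism property, right-invariance of $\mu_G$ allows the substitution $h \mapsto g_1 h$ in the integral defining $\chi(g_2)$, giving
\[
\chi(g_1) + \chi(g_2) = \int_G \bigl(\pi(g_1 h) - \pi(h)\bigr)\, d\mu_G(h) + \int_G \bigl(\pi(g_2 g_1 h) - \pi(g_1 h)\bigr)\, d\mu_G(h) = \chi(g_2 g_1),
\]
and commutativity of $\TT$ then swaps the roles of $g_1$ and $g_2$ to yield $\chi(g_1 g_2) = \chi(g_1) + \chi(g_2)$. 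Any $c < 1/2$ suffices for this construction; the particular value $\pi/6$ is the natural threshold once one transports the argument through the exponential map.

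For a general compact Lie group $H$, the plan is to replace scalar averaging by the Riemannian center of mass with respect to a bi-invariant metric on $H$. By Karcher's theorem, any Borel probability measure supported in a geodesic ball of radius less than a constant $c = c(H)$ determined by the convexity radius has a unique center of mass depending smoothly on the measure and equivariant under isometries of $H$. Define $\chi(g)$ to be the Karcher mean of the pushforward $(\phi_g)_*\mu_G$; this exists because all values of $\phi_g$ lie in the $q$-ball around $\pi(g)$, and $q \leq c$. The homomorphism property follows by combining right-invariance of $\mu_G$ with the equivariance of the Karcher mean under the left-isometry $x \mapsto \pi(g_1)x$: a substitution in the integral defining $\chi(g_1 g_2)$ rewrites the relevant pushforward so that its center of mass must equal $\chi(g_1)\chi(g_2)$. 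The explicit constant $1.36$ emerges from a second-variation/Jacobi-field estimate for the center of mass in the presence of non-negative sectional curvature of bi-invariant Lie-group metrics, quantifying how far the Karcher mean can drift from any given point in the support. Universal measurability of $\chi$ is automatic: when $\pi$ is universally measurable, so is $(g,h) \mapsto \phi_g(h)$; the assignment $g \mapsto (\phi_g)_*\mu_G$ into probability measures with the weak-$*$ topology inherits this, and the Karcher mean is a continuous functional, so $\chi$ is universally measurable as a composition.

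The main obstacle is getting the homomorphism equation to hold exactly, not merely up to an $O(q)$ error. In the abelian $\TT$ case this is a one-line Fubini-plus-substitution, but in the non-abelian case one must leverage the \emph{exact} equivariance of the Karcher mean under the isometry group of $H$, and track carefully how right-translation by $g_1$ inside the integral corresponds, after left-multiplication, to the data defining $\chi(g_1)\chi(g_2)$. Squeezing out the sharp constant $1.36$ similarly requires a genuine perturbation computation rather than the crude triangle-inequality bound that is available for $H = \TT$.
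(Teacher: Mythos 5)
This statement is imported by the paper as a black box from Grove--Karcher--Ruh and Kazhdan; the paper gives no proof, so your proposal has to stand on its own. Your overall strategy (average $h\mapsto \pi(gh)\pi(h)^{-1}$ over Haar measure, using the ordinary integral for $\TT$ and the Riemannian center of mass for general $H$) is indeed the strategy of the cited papers, but there are two problems. First, in the $\TT$ case the bookkeeping of lifts is wrong as written and, more importantly, hides the one step that actually needs an argument. Lifting $\pi(gh)-\pi(h)$ to its representative in $(-1/2,1/2]$ fails when $\pi(g)$ is near the antipode: the values then split into two clusters near $\pm 1/2$ and the integral lands near $0$, not near $\pi(g)$. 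You must instead lift the cocycle $\beta(g,h)=\pi(gh)-\pi(g)-\pi(h)$ to $[-q,q]$ and set $\chi(g)=\pi(g)+\int\widetilde\beta(g,h)\,d\mu_G(h)$. With that fix, your telescoping identity $\chi(g_1)+\chi(g_2)=\chi(g_2g_1)$ is an identity between integrals of real lifts whose pointwise sum is only guaranteed to agree with the third lift modulo $\ZZ$; the integer discrepancy is a priori a function of $h$, and an integral of an $h$-dependent integer need not be an integer. The argument is saved by noting that the discrepancy lies in an interval of length at most $6q<1$ (equivalently, by integrating the exact real-valued cocycle identity for $\widetilde\beta$, valid once $4q<1$), so it is a constant integer. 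This is exactly where the smallness threshold $c$ enters, and it is absent from your write-up.

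Second, and more seriously, the general compact $H$ case does not follow from a single averaging step. The Karcher mean is equivariant under a \emph{fixed} isometry, i.e.\ $\mathrm{cm}((a\cdot f)_*\mu)=a\cdot\mathrm{cm}(f_*\mu)$ for $a\in H$, but it is not multiplicative over pointwise products of two $h$-dependent maps; the telescoping $\pi(g_1g_2h)\pi(h)^{-1}=\bigl(\pi(g_1g_2h)\pi(g_2h)^{-1}\bigr)\bigl(\pi(g_2h)\pi(h)^{-1}\bigr)$ therefore does not pass through the center of mass, and your substitution argument only shows that $\chi$ is again an almost homomorphism with a smaller defect, not an exact one. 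Grove--Karcher--Ruh resolve this by iterating: each averaging step contracts the defect by a definite factor (this is where the curvature/Jacobi-field estimates enter), the iterates converge uniformly, and the limit is the exact homomorphism; the constant $1.36$ is the sum of the resulting geometric series of corrections, not the drift of a single Karcher mean. Since the paper only invokes the case $H=\TT$ (in Theorem 7.23), the abelian argument with the corrections above would suffice for its purposes, but as stated your proof of the general case has a genuine gap, and the universal measurability of the limit also needs to be tracked through the iteration rather than through a single continuous functional.
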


   The next theorem is the main result in this subsection. It tells us from an almost linear pseudometric, one can construct a group homomorphism to $\TT$ or to $\RR$; this can also be seen as a stability theorem of Proposition~\ref{prop: strong linear}.

  \begin{theorem}\label{thm: homfrommeasurecompact}
Let $\lambda=\rho/36$, and $\gamma<10^{-6}\rho$. Suppose $d$ is $\gamma$-linear and $\gamma$-monotone. Then there is a continuous surjective group homomorphism $\chi: G \to \TT$ such that for all $g\in \ker(\chi)\cap N(\lambda)$, we have $\| g\|_d \in N(\lambda/2)$.
 \end{theorem}
 
 \begin{proof}
Let $\omega$ be the returning weight of $d$, and let $\Gamma$ be as in the proof of Corollary~\ref{cor: analyticmulitvalued}. Equip $G$ with the $\sigma$-algebra $\mathscr{A}$ of universally measurable sets. Then $\mathscr{A}$ in particular consists of analytic subsets of $G$.   Define $F$ to be the function from $G$ to the set of closed subsets of $\TT$ given by
\[
F(g) =\overline{\{ t/\omega+\ZZ : t\in \RR,  (g, t/\omega +\ZZ)\in \Gamma\}}. 
\]
If $U$ is an open subset of $\TT$, then $\{g \in G : F(g) \cap U \neq \varnothing\}$ is in $\mathscr{A}$ being the projection on $G$ of the analytic set $\{ (g, t/\omega+\ZZ) \in G \times \TT : (g, t/\omega+\ZZ) \in \Gamma \text{ and } t \in U \}$. Applying Fact~\ref{KRN}, we get a universally measurable $1/400$-almost homomorphism $\pi: G \to \TT$.  
Using Fact~\ref{fact:almosthomo}, we get a universal measurable group homomorphism $\chi: G \to \TT$ satisfying 
\[
\|\chi(g)-\pi(g)\|_\TT < 1.36/400 = 0.0068.
\]The group homomorphism $\chi$ is automatically continuous by Fact~\ref{automaticcontinuity}. Combining with Corollary~\ref{cor: analyticmulitvalued}(iv), we see that $\chi$ cannot be the trivial group homomorphism, so  $\chi$ is surjective.

Finally, for $g$ in $\ker (\chi)\cap (N(\lambda))$, we need to verify that $g$ is in $N(\lambda/2)$. Suppose to the contrary that $g \notin N(\lambda/2)$. 
Choose $n=\lfloor1/\mu_G(N(4\lambda)) \rfloor$, and $(g_1, \ldots, g_n)$ the $\lambda$-sequence such that $g_i =g$ for $i\in \{1, \ldots, n\}$.
By Proposition~\ref{prop: propertoesofsign}, $(g_1, \ldots, g_n)$ is irreducible.   Hence, by Lemma~\ref{prop: lower bound on n}, $t(g_1, \ldots, g_n)< \omega$. As $n \leq 1/\mu_G(N(\lambda)+1)$, by construction and Corollary~\ref{cor: analyticmulitvalued}(iii), we have
\[
\pi(g^n) \in  t(g_1,\ldots,g_n)/\omega+ I(1/400)+\ZZ = n\|g\|_d/\omega + I(1/400)+\ZZ. 
\]
Since $g^n\in\ker\chi$, we  have $\|\pi(g^n)\|_\TT<0.0068$, so $n\|g\|_d/\omega< (0.0068+1/400)$. 
By Corollary~\ref{cor: a_lambdanew}(i) and Lemma~\ref{lem: N lambda}, this implies
\[
\|g\|_d\leq \frac{(0.0068+1/400)\cdot 4\lambda}{\mu_G(N(\lambda))}\mu_G(N(4\lambda))<\frac{\lambda}{2},
\]
which is a contradiction. This completes our proof.
\end{proof}

\section{Geometry of minimal and nearly minimal expansions II}\label{sec: geometry II}

In this section, we study the shape of a nearly minimally expanding pair relative to a connected closed proper subgroup of the ambient Lie group such that the cosets of the subgroup intersect the nearly minimal expanding pair ``transversally in measure''. Section~\ref{subsec: toric exp} shows that in a compact connected Lie group, such a subgroup exists and can in fact be chosen to be a one-dimensional torus. In Section~\ref{subsec: linear fiber} and~\ref{subsec: almost linear fiber},  we obtain shape description for the minimally expanding pair and nearly minimally expanding pair, respectively. Using that we will construct linear and almost linear pseudometric as described in Section~\ref{section:pseudometric}. 

Throughout this section,  $G$ is a {\it connected unimodular} {\it Lie} group, and $H$ is a {\it connected unimodular} closed subgroup of $G$. In particular, the left Haar measures $\mu_G$ and $\mu_H$ are Haar measures. We let $A$, and $B$ be $\sigma$-compact subsets of $G$.  We will assume familiarity with the preliminary Section~\ref{sec: 3.1} on locally compact group and Haar measure.

We set $\mu_{G/H}$ and $\mu_{H \backslash G}$ to be the Radon measures on $G/H$ and $H \backslash G$ such that we have the quotient integral formulas (Fact~\ref{fact: QuotientIF} and Lemma~\ref{lem: mesurability}(vi)). We also remind the reader that we normalize the measure whenever a group under consideration is compact, and $\pi: G \to G/H$, $ \widetilde{\pi}: G \to H\backslash G$ are quotient maps. Hence, if $H$ is compact, then we have
$$ \mu_G(AH)=\mu_{G/H}(\pi A)\quad \text{and}\quad \mu_{G}(HB) =\mu_{H \backslash G}( \widetilde{\pi} B), $$
and if $\chi: H \to \RR$ is a continuous and surjective group homomorphism with compact kernel, then the pushforward of $\mu_H$  is the Lesbegue measure $\mu_\RR$.

\subsection{Toric  transversal intersection in measure}\label{subsec: toric exp}
In this section, we assume that $G$ is {\it compact}. We will consider a more general situation than what we need assuming 
\[
\mu_G(A)= \mu_G(B)=\kappa. \]
 We will prove that if $\kappa$ is sufficiently small measure, and when $\mu_G(AB)<M\kappa$ for some constant $M$, then there is a torus $H\subseteq G$ such that we are in the short fiber scenario (i.e., for every $x,y\in G$,
\begin{equation}\label{eq: short fiber assump <c}
    \min\{\mu_H(A\cap xH),\mu_H( Hy\cap B)\}<\lambda
\end{equation}
for some given constant $\lambda$). Assume \eqref{eq: short fiber assump <c} fails for every maximal tori $H$, which means we have a long fiber ``in every direction''. Then both $A$ and $B$ can be seen as a variant of \emph{Kakeya sets} in Lie groups; see \cite{Kakeya} for some properties of Kakeya sets in this setting.
In general, it is well-known that Kakeya sets can have arbitrarily small measure; but when $A,B$ has nearly minimally expansion, we will show in this section that both $A$ and $B$ must not be too small. Our result  in particular applies to approximate groups, as described in Section~\ref{sec: 6.2}.

We use the following lemma, which can be seen as a corollary of the quotient integral formula (Fact~\ref{fact: QuotientIF}).

\begin{lemma}\label{lem:int}
For every $b\in G$, the following identity holds
 $$ \mu_G\big(A (B \cap Hb)\big) = \int_G \mu_H\big(( A \cap aH )(B \cap Hb)\big) \d\mu_G(a). $$
\end{lemma}
\begin{proof}
Let $C = A (B \cap Hb)$.  From Fact~\ref{fact: QuotientIF}, one has $$\mu_G(C)= \int_G \mu_H( C \cap a b^{-1}H b)  \d\mu_G(a)=  \int_G \mu_H( C \cap a H b)  \d\mu_G(a).$$  Hence, it suffices to check that  $$C \cap a Hb = ( A \cap aH )(B \cap Hb) \quad \text{ for all } a, b \in G.$$
The backward inclusion is clear. Note that $aH Hb = aHb = ab (b^{-1}Hb)$ for all $a$ and $b$ in $G$. For all $a$, $a'$, and $b$ in $G$, we have 
we have  $a'Hb = aHb$ when $aH=a' H$ and $ aHb \cap a'Hb = \varnothing$ otherwise.  
An arbitrary element $c \in C$ is in $(A \cap a'H)(B \cap Hb)$ for some $a'\in A$. Hence, if $c$ is also in $aHb$, we must have $a'H =aH$. So we also get the forward inclusion.
\end{proof}

Suppose $r$ and $s$ are in $\RR$, the sets $ A_{(r,s]}$ and $ \pi A_{(r,s]}$ are given by 
$$  A_{(r,s]}  := \{a \in A: \mu_H(A \cap aH) \in (r,s]  \}  $$
and
$$ \pi A_{(r,s]}:=  \{aH \in G \slash H : \mu_H( A \cap aH) \in (r,s]   \}. $$ 
In particular, $\pi A_{(r,s]}$ is the image of $A_{(r,s]}$ under the map $\pi$. By Lemma~\ref{lem: mesurability}, $\pi A_{(r,s]}$ is $\mu_{G/H}$-measurable, and $A_{(r,s]}H = \pi^{-1}(\pi A_{(r,s]})$ and $A_{(r,s]} $ are $\mu_G$-measurable.
\begin{lemma}
\label{lem: A=AH toric exp prepare}
Suppose $\mu_G(AB)<M\kappa$, and $\lambda<1$ is a constant, and either there is $a\in A$ such that $\mu_H(A\cap aH)>\lambda$, or there is $b\in B$ such that $\mu_H(B\cap Hb)>\lambda$. Then,
\[
\min\bigg\{\frac{\mu_G(A)}{\mu_{G/H}(\pi A)}, \frac{\mu_G(B)}{\mu_{H\backslash G}(\widetilde{\pi} B)}\bigg\}\geq \frac{\lambda}{(M+2)^2}.
\]
\end{lemma}
\begin{proof}
Without loss of generality, suppose $\mu_H(B\cap Hb)>\lambda$ for a fixed $b\in B$. By the quotient integral formula, $\kappa=\mu_G(A)$ is at least

\[
\int_{\pi A_{(1/2,1]}} \mu_H(A\cap xH) \d\mu_{G/H}xH> \frac{1}{2}\mu_{G/H}(\pi A_{(1/2,1]}) = \frac{1}{2}\mu_{G}(A_{(1/2,1]}H) ,
\]
Hence,  $\mu_G(A_{(1/2,1]}H)<2\kappa$.  We now prove that $\mu_G(A_{(0,1/2]}H)< M\kappa/\lambda$.   Suppose that it is not the case. By Lemma~\ref{lem:int} we get
\begin{align*}
  \mu_G(A(B\cap Hb))&\geq \int_{A_{(0,1/2]}H} \mu_H((A\cap aH)(B\cap Hb))\d\mu_G(a)
\end{align*}
Observe that $\mu_H((A\cap aH)(B\cap Hb))>\lambda$ since $\mu_H(B\cap Hb)>\lambda$, we have
\[
\mu_G(AB)\geq\mu_G(A(B\cap Hb))\geq \lambda \mu_G(A_{(0,1/2]}H)>M\kappa,
\]
contradicting the assumption that $\dis_G(AB)<(M-1) \kappa$. Hence $\mu_G(AH)<(M+2)\kappa/\lambda$. This implies that there is $a\in A$ such that $\mu_H(A\cap aH)>\lambda/(M+2)$. Now we apply the same argument switching the role of $A$ and $B$, we get $\mu_{H\backslash G}(\widetilde{\pi} B)<(M+2)^2\mu_G(B)/\lambda$ which completes the proof.
\end{proof}

  Lemma~\ref{lem: A=AH toric exp prepare} leads us to consider the problem of obtaining lower bound for the measure of toric nonexpanders, which is of independent interest. 
 
\begin{definition}
We say $A$ is called a {\bf toric $K$-expander}, if there is a one-dimensional torus subgroup $H$ of $G$ such that $\mu_G(AH) \geq K \mu_G(A)$. 
\end{definition}

The next lemma shows that the notion of toric expanders is stable under right translations and taking unions. 

\begin{lemma}\label{lem:nonexpander}
Suppose $A$ is not a toric $K$-expander, $g_1,\dots,g_n$ are in $G$, and $A' =\bigcup_{j=1}^d Ag_j$. Then $A'$ is not a toric $(nK)$-expander. 
\end{lemma}
\begin{proof}
We need to verify for each $T$ that $\mu_G(A'T)< nK\mu_G(A')$. Note that 
\[
A'T= \Big(\bigcup_{i=1}^nAg_i\Big)T = \bigcup_{i=1}^nA(g_iT) = \bigcup_{i=1}^nA(g_iT g_i^{-1})g_i = \bigcup_{i=1}^nAT_ig_i,
\]
where $T_i$ is  the torus subgroup $g_iTg^{-1}_i$ of $G$. Hence, $$\mu_G(A'T)< nK\mu_G(A)\leq nK\mu_G(A')$$
as desired.
\end{proof}

We will need the following inequality:

\begin{fact}[Bhatia--Davis inequality] \label{fact: Bhatia-Davis}
Suppose $(X, \mathscr{A}, \mu)$ is a measure space, $\alpha$ and  $\beta$ are constants, and $f: X \to \RR^{>0}$ is a measurable function with \[\alpha\leq \inf_{x\in X} f(x) < \sup_{x \in X}f(x)\leq \beta\]
Then $(\mathbb E_x f^2(x)) - (\mathbb E_x f(x))^2 \leq (\beta- \mathbb E_x f(x))(\mathbb E_x f(x) -\alpha).$
\end{fact}

  The following lemma will help us to translate the set along some direction. 

\begin{lemma}\label{lem: trans along H}
Suppose $K$, $\alpha$, and $\beta$ are constant with $K>1$, $0\leq\alpha\leq\beta\leq1$, $\mu_G(AH) = K\mu_G(A)$, and 
\[
\alpha\leq\inf_{g\in A} \mu_H(A\cap gH)\leq \sup_{g\in A} \mu_H(A\cap gH)\leq \beta.
\]
Then for every number $\gamma \geq (\alpha+\beta-K\alpha\beta) \mu_G(A)$, there is $h\in H$ with $\mu_G(A\cap Ah)=\gamma$.
\end{lemma}
\begin{proof}
Let $\mu_G$, $\mu_H$ be normalized Haar measures of $G$ and $H$. Choose $h$ from $H$ uniformly at random. Note that
\begin{align*}
\mathbb E_{h\in H}\mu_G(A\cap Ah)&= \int_H \mu_G(A\cap Ah)\d\mu_H(h)\\
&=\int_H\int_G\mathbbm{1}_{A}(g)\mathbbm{1}_{A}(gh)\d\mu_G(g)\d\mu_H(h).
\end{align*}
Using the quotient integral formula (Fact~\ref{fact: QuotientIF}), the above equality is
\begin{align*}
\int_G\mathbbm{1}_{A}(g)\mu_H(A\cap gH) \d\mu_G(g)&=\int_{G/H} \mu_H^2(A\cap gH)\d\mu_{G/H}(gH)\\
                                                &= \mathbb{E}_{gH \in G/H} \big(\mu_H^2(A\cap gH)\big)\\
                                                &= \mu_{G/H}( \pi A)\mathbb{E}_{gH \in \pi A} \big(\mu_H^2(A\cap gH)\big)
\end{align*}
Note that $\mu_{G/H}(\pi A) = \mu_G(AH)= K\mu_G(A)$, and 
$$\mathbb{E}_{gH \in \pi A} \big(\mu_H(A\cap gH)\big)= 1/K.$$ Hence, applying the Bhatia–Davis inequality (Fact~\ref{fact: Bhatia-Davis}), we get
\[
\mathbb E_{h\in H}\mu_G(A\cap Ah)\leq (\alpha+\beta-K\alpha\beta)\mu_G(A).
\]
The desired conclusion follows from the continuity of $H \to \RR, h \mapsto \mu_G(A\cap Ah)$.
\end{proof}

  The following lemma says that for a toric nonexpander $A$ and a torus subgroup $H$ of $G$, one can slightly modify $A$ to get $A'$ such that most of $A'H$ can be covered by finitely many right translations of $A'$. 
\begin{lemma}\label{lem:translationinonedirection}
Suppose $K>1$ is a constant, $A$ is not a toric $K$-expander, and $H$ is a one-dimensional torus subgroup of $G$. Then for every $0<\varepsilon<1/2K$, there is a $\sigma$-compact  $A' \subseteq A$, integer $m =m(K, \varepsilon)>0$, and $h_1,\dots,h_m\in H$, satisfying \begin{enumerate}[\rm (i)]
    \item $A'$ is not a toric $2K$-expander
    \item $\mu_G(A')>(1-\varepsilon K)\mu_G(A)$
    \item $\mu_G\left(A'H\setminus \bigcup_{i=1}^m A'h_i\right)<\varepsilon\mu_G(AH)$.
\end{enumerate}
\end{lemma}
\begin{proof}
Let $\mu_G,\mu_H$ be normalized Haar measures on $G$ and $H$, and let $\mu_{G/H}$ be the invariant Radon measure induced by $\mu_G$ and $\mu_H$ on the homogeneous space $G/H$. Let
\begin{align*}
&\pi A_{(\varepsilon,1]}=\{g\in G/H\mid \varepsilon< \mu_H(A\cap gH)\leq1\},\\
&\pi A_{(0,1]}=\{g\in G/H\mid A\cap gH\neq\varnothing\}.
\end{align*}
Let $x = \mu_{G/H}(\pi A_{( 0, \varepsilon)})/\mu_{G/H}(\pi A_{(0,1]})$, then $ \mu_{G/H}(\pi A_{(\varepsilon, 1]} )=    (1-x)(\mu_{G/H}(\pi A_{(0, 1]})$. 
One has \[
\frac{\mu_{G/H}(\pi A_{(0,1]})}{K}<\mu_G(A)\leq (\varepsilon x+1-x)\mu_{G/H}(\pi A_{(0,1]})). 
\]
It follows that
\[
x=\frac{\mu_{G/H}(\pi A_{(0,\varepsilon]})}{\mu_{G/H}(\pi A_{(0,1]})}< \frac{K-1}{K(1-\varepsilon)}.
\]
Choose $\sigma$-compact $A' \subseteq A_{(\varepsilon,1]} $ such that $\mu_G(A_{(\varepsilon,1]} \setminus A')=0$. One has
\begin{align*}
    \mu_G(A')&\geq \mu_G(A)-\varepsilon \frac{K-1}{K(1-\varepsilon)}\mu_{G/H}(\pi A_{(0,1]})\\
    &\geq \left(1-\varepsilon\frac{K-1}{1-\varepsilon}\right)\mu_G(A)=\frac{1-\varepsilon K}{1-\varepsilon}\mu_G(A)\geq (1-\varepsilon K)\mu_G(A).
\end{align*}
Hence we have
\[
\mu_G(A')\geq(1-\varepsilon K)\mu_G(A)\geq \frac{1-\varepsilon K}{K}\mu_G(AT)>\frac{1}{2K}\mu_G(A'T),
\]
for every torus $T$ when $\varepsilon<1/2K$.

It remains to obtain $m=m(\varepsilon, K)$ and $h_1, \ldots, h_m$ such that (iii) is satisfied. Construct a sequence $(A'_n)$ of $\sigma$-compact subsets of $G$ with $A'_nH= A'H$ as follows. Let $A'_0 =A'$. Suppose $A'_n$ has been constructed. Set 
\[
\varepsilon_n = \inf_{gH \in \pi_{A} }\mu_H( A'_n \cap gH)  \text{ and } K_n =\mu_G(A'_nH)/\mu_G(A'_n).
\] Using Lemma~\ref{lem: trans along H}, obtain $h_n'\in H$ such that
\[
\mu_G(A'h_n'\setminus A')=\varepsilon_n(K_n'-1)\mu_G(A_n').
\]
Finally, let $A'_{n+1} = A'_n \cup A'_nh'_n$. Then, $A_{n+1}H= A_nH=A'H$. It is also easy to see that  $\varepsilon_n \geq \varepsilon$ for all $n$. Now, if $\mu_G(A_n)< (1-\varepsilon)\mu_G(A'H)$ for some $n$, then
\[
K_n=\frac{\mu_G(A'H)}{\mu_G(A_n)}\geq \frac{1}{1-\varepsilon},
\]
 and hence, 
\[
\mu_G(A'_{n+1}) \geq \frac{1-\varepsilon+\varepsilon^2}{1-\varepsilon}\mu_G(A'_n).
\]
As $(1-\varepsilon+\varepsilon^2)/(1-\varepsilon)>1$, this cannot be the case for all $n$. Let $N$ be the first $n$ such that $\mu_G(A'_N)> (1-\varepsilon)\mu_G(A'H)$. Then 
\[
2K\mu_G(A')> \mu_G(A'H) \geq \mu_G(A'_{N}) \geq \left(\frac{1-\varepsilon+\varepsilon^2}{1-\varepsilon}\right)^N\mu_G(A').
\]
This implies that 
\[
N \leq \frac{\log 2K}{\log(1-\varepsilon+\varepsilon^2) - \log(1-\varepsilon)}.
\]
Finally set $m=2^N$, and choose $h_1, \ldots, h_m$ such that $A'_N= \bigcup^m_{i=1}A'h_i$, we get the desired conclusion.
\end{proof}

   The next simple fact shows that we can find finitely many tori such that the product of them is $G$. 

\begin{fact}\label{fact:maxtori}
Let $G$ be compact. Then there is a constant $n$ depending only on the dimension of $G$ such that there are $n$  tori $H_1,\dots, H_n$ in $G$ with $H_1\cdots H_n=G$.
\end{fact}

  Suppose $A$ is not a toric expander. The next important ``cage'' lemma provides an inductive construction to construct a set $C$ from $A$, such that the size of $C$ is bounded from above, and any right translations of $A$ cannot ``escape'' $C$. 

\begin{lemma}[Cage lemma]\label{lem:controllinglargeset}
Suppose $A \subseteq G$ is not a toric $K$-expander, then  there is a $\sigma$-compact $C \subseteq A$ such that $\mu_G(C) = O_K(1)\mu_G(A)$ and for all $g \in G$
$$ \frac{\mu_G( C\cap Ag)}{ \mu_G(A)} > \frac12. $$
\end{lemma}

\begin{proof}
Using Fact~\ref{fact:maxtori}, we obtain one-dimensional torus subgroups $H_1, \ldots, H_n$ of $G$ such that  $G=H_1\cdots H_n$. For every constant $\varepsilon_0,\dots,\varepsilon_{n-1}$,
we construct a sequence $(A_i)^n_{i=0}$ of $\sigma$-compact subsets of $G$  and a sequence $(K_i)^n_{i=0}$ of constants satisfying the following conditions
\begin{enumerate}
\item $A_0=A$ and $K_0=K$.
\item $A_{i}$ is not a toric $K_i$-expander for $0\leq i\leq n$.
\item $A_{i}\subseteq A_{i+1}$ with $\mu_G(A_{i+1}) \leq K_{i} \mu_G(A_{i})$ for $0\leq i\leq n-1$.
\item $\mu_G(A_i\setminus A_{i+1}h_{i+1}) \leq \varepsilon_i \mu_G(A_i)$ for any $h_{i+1} \in H_{i+1}$ and  $0\leq i\leq n-1$.
\item  $K_{i+1} = K_i(\varepsilon_0, \ldots, \varepsilon_{i})$ for $0\leq i\leq n-1$.
\end{enumerate}
Suppose we have $A_0,\ldots, A_i$ and $K_0,\ldots, K_i$ satisfying all the conditions restricted down to $i$. We are going to construct $A_{i+1}$. 
By (2), $A_i$ is not a toric $K_i$-expander. 
Let $\delta>0$ be a parameter which  will be determined later. 
Using Lemma~\ref{lem:translationinonedirection}, we obtain a $\sigma$-compact $A'_i\subseteq A_i$, $m=m(K_i,\delta)$, and $h'_1,\dots,h'_m\in H_{i+1}$ such that \[\mu_G(A'_i)>(1-\delta K_i)\mu_G(A_i),
\]
$A'_i$ is not a toric $2K_i$-expander, and  
\[
\mu_G\bigg(A'_iH_{i+1}\setminus\bigcup_{j=1}^m A'_ih'_j\bigg)<\delta\mu_G(A'_iH_{i+1}).
\]
By adding one element of $H_{i+1}$ if necessary, we can arrange that $\id$ is in $\{h'_1, \ldots, h'_m\}$.
Set $A_{i+1}=\bigcup_{j=1}^m A'_ih'_j$ and set $K_{i+1}=mK_i$. By Lemma~\ref{lem:nonexpander}, $A_{i+1}$ is not a toric $K_{i+1}$-expander, so (2) is satisfied. By construction $A_i \subseteq A_{i+1}$, and 
\[
\mu_G(A_{i+1}) \leq \mu_G(A'_iH_{i+1}) \leq \mu_G(A_iH_{i+1}) < K_i\mu_G(A_i), 
\]
so we have (3). For every $h'\in H_{i+1}$, since $A_{i+1}h'\subseteq A'_iH_{i+1}$, we have
\[
\mu_G(A'_i\setminus A_{i+1}h')< \delta\mu_G(A'_iH_{i+1}) \leq \delta \mu_G(A_iH_{i+1}) < \delta K_i\mu_G(A_i).
\]
Therefore,
\[
\mu_G(A_i\setminus A_{i+1}h')< 2\delta K_i\mu_G(A_i).
\]
Note that the construction so far depends on $\delta$.
Now, by choosing $\delta=\delta(K_i, \varepsilon_i)$ sufficiently small, we can make 
\[\mu_G(A_i\setminus A_{i+1}h')<\varepsilon_i\mu_G(A_i),\]
so we get (4). Finally, note that $K_{i+1}=mK_i$,  $m=m(K_i, \delta)$, $\delta= \delta(K_i, \varepsilon_i)$, and $K_i=K_i(\varepsilon_0, \ldots, \varepsilon_{i-1})$, so 
\[K_{i+1} =K_{i+1}(\varepsilon_0, \ldots, \varepsilon_{i}),\]
which gives us (5).

We now proceed with the proof of the lemma. Let $\varepsilon_0,\dots,\varepsilon_{n-1}$ be parameters which we will determine later, and obtain $(A_i)^n_{i=0}$ and $(K_i)^n_{i=0}$ as in the earlier step.
Set $C=A_n$. 
Note that 
\[
G = G^{-1} = (H_1 \ldots H_n)^{-1} = H_n \cdots H_1.
\] 
Hence, an arbitrary $g\in G$ can be written as a product $h_n \cdots h_1$ with $h_i \in H_i$ for $i \in \{1, \ldots, n\}$. Now consider $Cg = A_n h_n \cdots h_1$. By (4), $\mu_G(A_{n-1}\setminus A_nh_n)<\varepsilon_{n-1} \mu_G(A_{n-1})$. Next, for $A_nh_nh_{n-1}$, again by (4), 
\begin{align*}
 \mu_G(A_{n-2}\setminus A_nh_nh_{n-1})&\leq  \mu_G(A_{n-2}\setminus A_{n-1}h_{n-1})+\mu_G(A_{n-1}h_{n-1}\setminus A_nh_nh_{n-1} ) \\ &< \varepsilon_{n-2}\mu_G(A_{n-2}) + \varepsilon_{n-1} \mu_G(A_{n-1}).  
\end{align*}

Hence,  by induction and (3), we conclude that
\[
\mu_G(A\setminus Cg)<\sum_{i=0}^{n-1}\varepsilon_{i}\mu_G(A_i)\leq \left(\sum_{i=0}^{n-1}\varepsilon_i\prod_{j=0}^{i-1}K_j\right) \mu_G(A).
\]
Using (5), we can choose $\varepsilon_i$ sufficiently small such that $\left(\sum_{i=0}^{n-1}\varepsilon_i\prod_{j=0}^{i-1}K_j\right)<1/2$.  Then, for all $g\in G$.
$$ \frac{\mu_G( C\cap Ag)}{ \mu_G(A)}= \frac{\mu_G( Cg^{-1}\cap A)}{ \mu_G(A)} >\frac12. $$
Finally, note that we can choose $\varepsilon_0, \ldots, \varepsilon_{n-1}$ depending only on $K$. Hence, $$\mu_G(C) = O_K(1)\mu_G(A),$$ which completes the proof.
\end{proof}

Suppose $\mu$ and $\nu$ are measures on $G$. Their {\bf convolution} $\mu*\nu$ is the unique measure  satisfying the property
$$  \int_G f(x) \d\mu*\nu(x)  = \int_{G\times G} f(xy) \d\mu(x)\d\nu(y). $$
The convolution exists for all the case we care about. If $\mu(A) >0$, the {\bf uniform measure} on $A$ is defined by
\[
\mu_A(X):=\frac{\mu_G(A\cap X)}{\mu_G(A)} \quad \text{ for measurable }  X \subseteq G. 
\] 
 The following lemma is an immediate consequence of the definition and Fubini's theorem.

\begin{lemma} \label{lem: leftabsorb}
Let $\mu_A$ be the uniform measure on $A$.
Then $\mu_A * \mu_G = \mu_G.$
\end{lemma}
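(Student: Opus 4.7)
The plan is to verify the identity directly from the definition of convolution of measures, using a test function and Fubini's theorem, together with the left invariance of the Haar measure $\mu_G$.

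More concretely, I would take a continuous compactly supported $f : G \to \RR$ (or more generally any bounded measurable $f$) and compute
\[
\int_G f(z)\, \d(\mu_A * \mu_G)(z) = \int_{G \times G} f(xy)\, \d\mu_A(x)\, \d\mu_G(y).
\]
By Fubini's theorem (the integrand is nonnegative when $f \geq 0$, which reduces the general case to the standard split into positive and negative parts), this equals
\[
\int_G \left( \int_G f(xy)\, \d\mu_G(y) \right) \d\mu_A(x).
\]
Here the inner integral is over $y$ with $x$ held fixed. For each fixed $x \in G$, left invariance of $\mu_G$ gives $\int_G f(xy)\, \d\mu_G(y) = \int_G f(z)\, \d\mu_G(z)$ via the substitution $z = xy$. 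Substituting back, the double integral collapses to
\[
\left(\int_G f(z)\, \d\mu_G(z)\right) \cdot \mu_A(G) = \int_G f(z)\, \d\mu_G(z),
\]
since $\mu_A(G) = \mu_G(A \cap G)/\mu_G(A) = 1$ by the assumption that $\mu_G(A) > 0$. Thus $\mu_A * \mu_G$ and $\mu_G$ agree when integrated against every continuous compactly supported function, and by the uniqueness of Radon measures this forces $\mu_A * \mu_G = \mu_G$.

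There is no real obstacle here: the unimodularity hypothesis on $G$ is not needed for this particular identity (left invariance of $\mu_G$ alone is used in the second factor), and no delicate measurability issues arise because $\mu_A$ is a probability measure supported on the measurable set $A$ and $f$ can be taken continuous and compactly supported so that $(x,y) \mapsto f(xy)$ is jointly continuous, making Fubini's theorem immediate. The only minor care needed is to note that the convolution $\mu_A * \mu_G$ is well defined (which falls under the statement in the paper that ``the convolution exists for all the cases we care about''), after which verifying equality against test functions suffices by the Riesz representation theorem.
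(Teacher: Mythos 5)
Your proof is correct and follows essentially the same route as the paper's: the paper evaluates $\mu_A*\mu_G$ directly on a measurable set $X$ (i.e., against the indicator $\mathbbm{1}_X$), applies Fubini and left invariance of $\mu_G$ to collapse the inner integral to $\mu_G(X)$, and uses $\mu_A(G)=1$. Your version with continuous compactly supported test functions and the Riesz/uniqueness step is only a cosmetic variant of the same computation.
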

\begin{proof}
Let $X$ be a measurable set in $G$, then
\begin{align*}
  \mu_{A}*\mu_G(X)=\int_G\left(\int_G\mathbbm{1}_{X}(xy)\d\mu_G(y) \right)\d\mu_A(x)=\mu_G(X)\int_G\d\mu_A(x)=\mu_G(X)
\end{align*}
as desired.
\end{proof}

  The next proposition gives us a lower bound control on the measure of sets which are not toric expanders. Together with Lemma~\ref{lem: A=AH toric exp prepare}, this quantitative result shows that a sufficiently small set which inside some nearly minimal expansion pair cannot contain a long fiber in every direction.

\begin{proposition}[Bounding size of toric nonexpanders]\label{prop:toricexpander}
For each $K$, there is $S=O_K(1)$ such that if $A$ is not a toric $K$-expander, then $\mu_G(A)>S$. 
\end{proposition}

\begin{proof}
Let $C$ be as in Lemma~\ref{lem:controllinglargeset}. Recall $\mu_A$ is the uniform measure on $A$, and $\mu_A*\mu_G$ is the convolution measure. Then 
\begin{align*}
    \int_{G} \mathbbm{1}_{C}(x)\d\mu_A*\mu_G(x)&=\int_G\int_G\mathbbm{1}_{C}(xy)\d\mu_A(x)\d\mu_G(y)\\
   &= \int_G \frac{\mu_G(C y^{-1}\cap A)}{\mu_G(A)} \d\mu_G(y)\geq \frac12.
\end{align*}
This means $\mu_A*\mu_G(C)\geq1/2$. By Lemma~\ref{lem: leftabsorb}, this implies $\mu_G(C)\geq1/2$. Since $\mu_G(C)= O_K(1)\mu_G(A)$ we get the desired conclusion.
\end{proof}

We  now deduce the main theorem of this section.

\begin{theorem} \label{thm: Torictransversal}
There is $S=O_{M, \lambda}(1)$ such that if $\mu_G(A)=\mu_G(B) =\kappa < S$ and $\mu_G(AB)< M\kappa$, then there is a one-dimensional torus subgroup $H$ of $G$ such that for all $x, y \in G$
$$ \mu_H(A \cap xH)< \lambda \text{ and } \mu_H(B \cap Hy)< \lambda. $$
\end{theorem}
\begin{proof}
Suppose for every one-dimensional torus subgroup $H$ of $G$, either $\mu_H(A \cap xH)> \lambda$ some $x\in G$ or $\mu_H(B\cap By)> \lambda$ for some $y\in G$. Then by Lemma~\ref{lem: A=AH toric exp prepare}, $A$ is not a toric $K$-expander with 
$ K= (M+2)^2/\lambda.$
Hence, by Proposition~\ref{prop:toricexpander}, we have $\mu_G(A)> S$ with $S= O_{M, \lambda}(1)$. Thus, we get the desired conclusion.
\end{proof}

By computing the constant coefficients carefully in Lemma~\ref{lem:controllinglargeset}, the constant $S$ in Theorem~\ref{thm: Torictransversal} is of order $M^{-M^{2^{d+2}}}$. We did not try to optimise this number in the proof, and we suspect a single exponential bound should be enough.

We get the following immediate corollary for approximate groups. Since our proof is quantitative, we can make the constant below quantitative if we wish.

\begin{corollary}There is $S=O_{K}(1)$ such that if $A$ is a $K$-approximate group with $\mu_G(A)<S$, then there is a one-dimensional torus subgroup $H$ of $G$ such that for all $x, y \in G$
$$ \mu_H(A \cap xH)< \lambda \text{ and } \mu_H(B \cap Hy)< \lambda. $$
\end{corollary}

\subsection{Linear pseudometric from minimal expansions}\label{subsec: linear fiber}

 Throughout the subsection $G$ is a connected \emph{noncompact} unimodular group, and $H$ is a closed subgroup of $G$ which is either isomorphic to $\RR$, or some smaller dimensional connected unimodular group, so that by induction on dimension we may assume Theorem~\ref{thm:mainequal} holds on $H$. Suppose  $(A,B)$ is minimally expanding, that is,
\[
\mu_G(AB)=\mu_G(A)+\mu_G(B),
\]
and both $A,B$ have positive measure. 

This section can also be seen as a preview of Section~\ref{subsec: almost linear fiber}. The strategy of this section also works for compact $G$  replacing $H$ with $\TT$ and results of Section~\ref{subsec: toric exp}. For convenience of notation, we will treat the compact case in Section~\ref{subsec: almost linear fiber} together with the situation where $(A,B)$ is nearly minimally expanding.

\begin{lemma} \label{lem: Kempermansubgroup}
For all $a \in A$ and $b \in B$, we have
\begin{enumerate}
    \item $\mu_H\big(( A \cap aH)(B \cap Hb)\big) \geq \mu_H(A \cap aH) + \mu_H(B\cap Hb).  $
    \item $\mu_G\big(A ( B \cap Hb)\big) \geq  \mu_G(A) + \mu_{G/H}(\pi A)   \mu_{H} ( B \cap Hb ).$
    \item $\mu_G\big((A \cap aH) B\big) \geq  \mu_{H} ( A \cap aH ) \mu_{H \backslash G}(\widetilde{\pi}B) + \mu_G(B).$  
    \end{enumerate}
The equality in (2) holds if and only if the equality in (1) holds for almost all $a \in AH$. A similar conclusion holds for (3).
\end{lemma}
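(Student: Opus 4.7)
My plan is to deduce (1) directly from Kemperman's inequality applied to $H$ itself, and then to obtain (2) and (3) by integrating the pointwise bound (1) over the appropriate coset space via Lemma~\ref{lem:int} and its right-coset analogue.

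For (1), I would observe that $A \cap aH = aX$ where $X := a^{-1}A \cap H \subseteq H$, and $Hb \cap B = Yb$ where $Y := Bb^{-1} \cap H \subseteq H$. Thus $(A \cap aH)(Hb \cap B) = a(XY)b$ sits inside the single coset $aHb$. Under the fiber-length convention recalled in Section~3, and using that $H$ is unimodular (so $\mu_H$ transports unambiguously via conjugation to cosets of the form $aHb$), one has the identifications
\[
\mu_H\bigl((A \cap aH)(Hb \cap B)\bigr) = \mu_H(XY), \quad \mu_H(A \cap aH) = \mu_H(X), \quad \mu_H(Hb \cap B) = \mu_H(Y).
\]
Since $H$ is either $\RR$ or a smaller-dimensional connected unimodular group, Kemperman's inequality is available on $H$ (the one-dimensional Brunn--Minkowski inequality when $H = \RR$, and by the induction on dimension otherwise), and yields $\mu_H(XY) \geq \min\{\mu_H(X) + \mu_H(Y), \mu_H(H)\}$. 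This gives (1).

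For (2), I would apply Lemma~\ref{lem:int} to rewrite the left-hand side as an integral of $\mu_H((A \cap aH)(Hb \cap B))$ along the fibers of $G \to G/H$. Pointwise, (1) supplies the bound $\mu_H(A \cap aH) + \mu_H(Hb \cap B)$ on $AH := \{aH : A \cap aH \ne \emptyset\}$, while outside $AH$ both the integrand and $\mu_H(A \cap aH)$ vanish. Using that the integral of $\mu_H(A \cap aH)$ over the quotient reproduces $\mu_G(A)$ (the quotient integral formula), while $\mu_H(Hb \cap B)$ is constant in the integration variable and contributes $\mu_{G/H}(AH)\,\mu_H(Hb \cap B)$, I obtain (2). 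Statement (3) then follows by the completely symmetric argument: apply the right-coset analogue of Lemma~\ref{lem:int} and integrate the same pointwise bound from (1) over $H \backslash G$ instead.

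Since we are integrating a nonnegative pointwise inequality, the standard criterion for equality in Lebesgue integration immediately supplies the addendum: equality in (2) holds if and only if equality in (1) holds for $\mu_{G/H}$-almost every $aH \in AH$, and likewise for (3). The only real care needed is bookkeeping across the skew coset $aHb$, which is handled by unimodularity of both $G$ and $H$; there is no deeper obstacle once Kemperman on $H$ is taken as known.
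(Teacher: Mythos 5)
Your proposal is correct and follows essentially the same route as the paper: statement (1) is Kemperman's inequality applied fiberwise on $H$ (available by induction or because $H\cong\RR$), and (2)–(3) follow by integrating that pointwise bound over the relevant coset space via Lemma~\ref{lem:int} and its right-coset analogue, with the equality addendum coming from the standard a.e.\ criterion for equality of integrals. The only cosmetic difference is that the paper first right-translates $B$ so that $Hb=H$, whereas you track the skew coset $aHb$ directly using unimodularity; both are fine.
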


\begin{proof}
The first inequality comes from a direct application of the Kemperman inequality. For the second inequality, by right translating $B$ and using the unimodularity of $G$, we can arrange that $Hb =H$. The desired conclusion follows from applying (1) and Lemma~\ref{lem:int}. 
\end{proof}

  The next theorem gives us the important geometric properties of $A$ and $B$.

\begin{theorem}[Rigidity fiberwise]\label{thm: equal fiberwise same length}
There is a continuous surjective group homomorphism $\chi:H\to\RR$, two compact intervals $I, J \subseteq \RR$  
with 
\[ \mu_\RR(I)=\frac{\mu_G(A)}{\mu_{G/H}(\pi A)}\  \text{ and } \  \mu_\RR(J)=\frac{\mu_G(B)}{\mu_{H\backslash G}(\widetilde{\pi} B)},\]
$\sigma$-compact sets $A'\subseteq A$ and $B'\subseteq B$ with 
\[ \mu_{G/H}(\pi A')=\mu_{G/H}(\pi A) \ \text{ and }\  \mu_{H \backslash G}(\widetilde{\pi} B') = \mu_{H \backslash G}(\widetilde{\pi} B),\]
such that the following hold:
\begin{enumerate}[\rm (i)]
    \item $\mu_{G/H}(\pi A)=\mu_{H \backslash G}(\widetilde{\pi} B)$;
    \item for each $a\in A'H$, we have
    \[  \mu_H( A \cap aH) =\frac{\mu_G(A)}{\mu_{G/H}(\pi A)}, \]
    and there is $\zeta_a\in \RR$ such that $$\mu_H\big((A\cap aH) \tri a\chi^{-1}(\zeta_a+I)\big)=0.$$ 
    \item for each $b\in HB'$, we have 
    \[  \mu_H( B \cap Hb) =\frac{\mu_G(B)}{\mu_{H \backslash G}(\widetilde{\pi} B)}, \]
    and there is $\widetilde{\zeta}_b\in \RR$ such that  $$ \mu_H\big((B \cap Hb) \tri \chi^{-1}(\widetilde{\zeta}_b+J)b\big)=0.$$
\end{enumerate}
\end{theorem}
\begin{proof}
Without loss of generality we assume $\mu_{G/H}(\pi A)\geq\mu_{H\backslash G}(\widetilde{\pi} B)$. Below, we let $Hb$  range over $\widetilde{\pi} B$, and choose $Hb$ uniformly at random in the expectation. 
By Lemma~\ref{lem:int} and the quotient integration formula, we have
\begin{align}
     \sup_{Hb}\mu_G(A(B\cap Hb)) 
    &\geq \mu_G(A)+ \mu_{G/H}(\pi A) \sup_{Hb}\mu_H(B\cap Hb) \label{eq: noncompact1}  \\
    &\geq \mu_G(A)+ \mu_{G/H}(\pi A) \mathbb E_{Hb}\mu_H(B\cap Hb) \nonumber  \\
    &=\mu_G(A)+\frac{\mu_{G/H}(\pi A)}{\mu_{H\backslash G}(\widetilde{\pi}  B)}\mu_G(B)\geq \mu_G(A)+\mu_G(B).\nonumber
\end{align}
Note that $\mu_G(AB)\geq\sup_{Hb}\mu_G(A(B\cap Hb))$. 
Since $\mu_G(AB) = \mu_G(A)+\mu_G(B)$, the equality must hold at each step. In particular, we have
\begin{equation}\label{eq: AH=HB}
    \mu_{G/H}(\pi A)=\mu_{H\backslash G}(\widetilde{\pi}  B),
\end{equation}
and for $\mu_{H \backslash B}$-almost all $Hb\in \widetilde{\pi}(B)$, we  have 
\[
\mu_H(B\cap Hb)=\mathbb E_{Hb'}(B\cap Hb') = \frac{\mu_G(B)}{\mu_{H \backslash G}(\widetilde{\pi} B)}.
\]
Now, using \eqref{eq: AH=HB} and applying the same argument again switching the role of $A$ and $B$, we conclude that for $\mu_{G/H}$-almost all $aH$ in $\pi A$, we have 
\begin{equation} \label{eq: fiberwiseequality}
\mu_H(A\cap aH)=\mathbb E_{a'H}\mu_H(A\cap a'H)= \frac{\mu_G(A)}{\mu_{G/H}(\pi A)}.
\end{equation}
Moreover, the fact that equality holds in \eqref{eq: noncompact1} shows that for $\mu_{G/H}$-almost all $aH\in \pi A$ and $\mu_{H\backslash G}$-almost all $Hb\in \widetilde{\pi} B$ we have
$$ \mu_H((A\cap aH)(B\cap Hb))=\mu_H(A\cap aH)+\mu_H(B\cap Hb).  $$
By the relationship between $\mu_G$ and $\mu_{G/H}$,  in the preceding statement,  we can replace $\mu_{G/H}$-almost all $aH\in \pi A$ with $\mu_G$-almost all $a \in AH$. We can do the same for $\mu_G$ and $\mu_{ H \backslash G}$.

Now, as $H$ satisfies Theorem~\ref{thm:mainequal}, for $a$ an $b$ such that \eqref{eq: fiberwiseequality} holds, we can choose  continuous surjective group homomorphisms $\chi_{a},\chi_b: H\to\RR$ and compact intervals $I_{a}$, and $J_{b}$ in $\RR$ with 
\[
\mu_\RR(I_{a})=\mu_H(A\cap aH), \quad \mu_\RR(J_{b})=\mu_H(B\cap Hb), 
\]
and
\[
\mu_H\big((A\cap aH)\tri \chi^{-1}_{a}(I_{a})\big)=0, \quad \mu_H\big((B\cap Hb)\tri \chi^{-1}_{b}(J_{b})\big)=0.
\]
Applying Fact~\ref{fact:Brunnd=1 R}, we deduce that $\chi_{a}$, $\chi_b$ are the same for $\mu_G$-almost all $a \in AH$ and $\mu_G$-almost all $b \in HB$. We also have
$I_{a}$ and $J_{b}$ have constant lengths for $\mu_G$-almost all $a \in AH$ and $\mu_G$-almost all $b \in HB$. Hence there are $\zeta_a,\widetilde{\zeta}_b\in\TT$ for almost all $a\in AH$ and almost all $b\in HB$, such that $I_a=\zeta_a+I$, and $J_b=\widetilde{\zeta_b}+J$, where
\[
I=\left[0, \frac{\mu_G(A)}{\mu_{G/H}(\pi A)}\right]\quad\text{and}\quad J=\left[0, \frac{\mu_G(A)}{\mu_{H\backslash G}(\widetilde{\pi} B)}\right]. 
\]
It follows that we can choose $A'\subseteq A$ and $B'\subseteq B$ 
as described in the statement of the theorem.
\end{proof}

\begin{corollary}[Global structure of $AH$]\label{lem:AH=gAH}  For all $g\in G$, we have 
\[ \mu_{G/H}(\pi A \tri \pi(gA) ) =0.\]
\end{corollary}
\begin{proof}
Set $\rho =\mu_G(A)$. Recall that $\Stab^{<2\rho}_G(A)$ is open in $G$, and every $g \in G$ can be expressed as a finite products of elements in $\Stab^{<2\rho}_G(A)$ since $G$ is connected. Therefore, it suffices to consider the case where $g$ is in $\Stab^{<2\rho}_G(A)$.  Clearly, $(gA,B)$ is a minimally expanding pair. In the current case, we also have $\mu_G(A\setminus gA)< \rho$ and $\mu_G(A\cap gA)>0$. 
 By  Lemma~\ref{lem:iep}, $(A\cup gA,B)$ is also a minimally expanding pair. Theorem~\ref{thm: equal fiberwise same length}(i) then gives us
 $$ \mu_{G/H}(\pi A) = \mu_{G/H}(\pi (gA)) = \mu_{G/H}(\pi A \cup \pi (gA)) = \mu_{H \backslash G}(\pi B). $$
 This gives us $\mu_{G/H}(\pi A \tri \pi(gA) ) =0$ as desired.
\end{proof}

Theorem~\ref{thm: equal fiberwise same length} and Corollary~\ref{lem:AH=gAH} essentially allows us to define a ``directed linear pseudometric'' on $G$ by ``looking at the generic fiber'' as discussed in the following remark:
\begin{remark}\label{rem}
Fix $a\in AH$ and let the notation be as in Theorem~\ref{thm: equal fiberwise same length}. For $g_1, g_2$ in $G$ such that $g_1^{-1}a, g_2^{-1}a \in A'H$, set
\[
\delta_{a,A}(g_1,g_2)=\zeta_{g_1^{-1}a}-\zeta_{g_2^{-1}a}.
\]
 We have the following linearity property of $\delta_{a,A}$ when the relevant terms are defined, which is essentially the linearity property of the metric from $\RR$.  
\begin{enumerate}
    \item $\delta_{a,A}(g_1,g_1)$=0.
    \item $\delta_{a,A}(g_1,g_2)=\delta_{a,A}(g_2,g_1)$.
    \item $\delta_{a,A}(g_1,g_3)=\delta_{a,A}(g_1,g_2)+\delta_{a,A}(g_2, g_3).$
\end{enumerate}
Properties (1) and (2) are immediate, and property (3) follows from the easy calculation below:
\begin{align*}
  \delta_{a,A}(g_1,g_2)&=\zeta_{g_1^{-1}a}-\zeta_{g_2^{-1}a} \\
    &=\zeta_{g_1^{-1}a}-\zeta_{g_3^{-1}a}+\zeta_{g_3^{-1}a}-\zeta_{g_2^{-1}a} \\
    &= \delta_{a,A}(g_1,g_3)\pm \delta_{a,A}(g_3,g_2).  
\end{align*}
Properties (3) also implies that
\[
| \delta_{a,A}(g_1,g_3)|=\big|\pm |\delta_{a,A}(g_1,g_2)| \pm |\delta_{a,A}(g_2, g_3)|\big|.
\]
which tells us that $|\delta_{a,A}|$ is a linear pseudometric.
The problem with the above definitions is that they are not defined everywhere. 

There are two ways to overcome this difficulty. The new approach, using difference in measure, will be presented later on. The old approach, presented in an earlier version of this paper, is to define a pseudometric on $G$ directly by setting 
$$ d(g_1, g_2) = \xi \text{ if  for } \mu_G\text{-almost all } a \in AH,\  |\delta_{a,A}(g_1,g_2)|=\xi.    $$
This is indeed possible. In fact, one can bypass the pseudometric machinery altogether and define the group homomorphism $\chi: G \to \TT$ directly by setting 
$$  \chi(g) = \zeta \text{ if  for } \mu_G\text{-almost all } a \in AH,\  |\delta_{a,A}(\id,g)|=\zeta. $$
However, this does not come for free, and one need to work equally hard to verify that $\chi$ is indeed a group homomorphism.

The old approach can moreover be extended to the case of nearly minimal expansion. However, we can only handle a quadratic error with this old approach because we only have Corollary~\ref{cor:AH=1}, which lacks the global property of Corollary~\ref{lem:AH=gAH}. The real problem solved by introducing the pseudometric is to get the linear error bound for the nearly minimal expansion problem.
\end{remark}

Recall that for every $g_1,g_2\in G$, 
$
d_A(g_1,g_2)=\mu_G(g_1A\setminus g_2A).  
$
By Proposition~\ref{prop: construct pseudo-metric}, $d_A$ is a pseudometric on $G$. The next lemma builds an important bridge between fiberwise information $\delta_a(g_1,g_2)$ to the pseudometric  $d_A(g_1,g_2)$.

\begin{lemma}[From local to global]\label{lem: equal local to global}
Let $\chi: H \to \RR$ be as in Theorem~\ref{thm: equal fiberwise same length}.
For all $g_1, g_2 \in G$ with $\mu_{G}(g_1A \cap g_2A)>0$, there is a $\sigma$-compact $A'' \subseteq A$ with \[\mu_{G/H}(\pi A'') =\mu_{G/H} (\pi A)\]
such that for all $a \in A''H$, the following holds
\begin{enumerate}[\rm (i)]
    \item there are $\zeta_{g^{-1}_1a},  \zeta_{g^{-1}_2a} \in \TT$ such that for $i \in \{1,2\}$;
    \[ \mu_H\big((A\cap aH)\tri a\chi^{-1}(\zeta_{g_{i}^{-1}a}+I)\big)=0.\]
    \item  with any $\zeta_{g^{-1}_1a}, \zeta_{g^{-1}_2a} $ satisfying (i) and $\delta_{a, A}(g_1, g_2)= \zeta_{g^{-1}_2a}- \zeta_{g^{-1}_1a}$, if we have $\mu_{G}(g_1A \cap g_2A)>0$, then
$$ d_A(g_1, g_2) =  \mu_{G/H}(\pi A)|\delta_{a,A}(g_1,g_2)|.    $$
\end{enumerate}

\end{lemma}

\begin{proof}
Obtain $A', I, J, \zeta_a$ as in Theorem~\ref{thm: equal fiberwise same length}. Let $A'' \subseteq G$ be the $\sigma$-compact set
\[
\{ a \in A: g_1^{-1}a, g_2^{-1}a \in A'H      \}.
\]
By Corollary~\ref{lem:AH=gAH}, $\mu_{G/H}(\pi A'') =\mu_{G/H} (\pi A)$. Fix $a \in A''$.
 We then have 
\[  A \cap g_1^{-1}aH = g_1^{-1}a\chi^{-1}( \zeta_{g^{-1}_1a}+I)  \text{ and }  A \cap g_2^{-1}aH = g_2^{-1}a \chi^{-1}( \zeta_{g^{-1}_2a}+I). \]
Multiplying by $g_1$ and $g_2$ respectively, we get (i). 

As $\mu_{G}(g_1A \cap g_2A)>0$,  by Lemma~\ref{lem:iep}, $(g_1A\cap g_2A, B)$ is  minimally expanding. From Theorem~\ref{thm: equal fiberwise same length}(ii), for $\mu_{G/H}$-almost all $aH \in \pi(g_1A\cap g_2A) $, we have  
\[
\mu_H(g_1A\cap aH) =\frac{\mu_G(g_1A)}{\mu_{G/H}(\pi(g_1A))}\  \text{ and } \ \mu_H(g_1A\cap g_2A\cap aH) =\frac{\mu_G(g_1A\cap g_2A)}{\mu_{G/H}(\pi(g_1A\cap g_2A))}.
\]
Note that 
$\pi(g_1A\cap g_2A) \subseteq  \pi(g_1A) \cap \pi(g_2A).$
However, by Theorem~\ref{thm: equal fiberwise same length},
\[
\mu_{G/H}(\pi(g_1A\cap g_2A) ) = \mu_{H \backslash G}( \widetilde{\pi} B) =  \mu_{G/H}(\pi(g_1A) ) = \mu_{G/H}(\pi(g_2A)).
\]
Combining with Corollary~\ref{lem:AH=gAH}, we get 
\[ \mu_{G/H}( \pi(g_1A) \tri \pi A  ) =0 \ \text{ and } \ \mu_{G/H}( \pi(g_1A\cap g_2A) \tri \pi A  ) =0. \]
Hence, removing a set of measure $0$ from the above $A''$ if necessary, we can arrange that for all $a \in A''H$, 
\[
\mu_H(g_1A\cap aH) =\frac{\mu_G(g_1A)}{\mu_{G/H}(\pi A)}\  \text{ and } \ \mu_H(g_1A\cap g_2A\cap aH) =\frac{\mu_G(g_1A\cap g_2A)}{\mu_{G/H}(\pi(A))}.
\]
Finally, from (i), for all $a \in A''H$, we have
\[
\mu_H(g_1A)-\mu_H(g_1A \cap g_2A \cap aH ) = |\zeta_{g^{-1}_1a} - \zeta_{g^{-1}_1a}|.
\]
Recalling that $d_A(g_1,g_2)= \mu_G(g_1A) -\mu_G(g_1A \cap g_2 A)$, we learn that (ii) is satisfied.
\end{proof}

We now prove the pseudometric is linear as promised.

\begin{proposition}[Linearity of the pseudometric] \label{prop: equal pseudometric linear} For all, $g_1,g_2,g_3$ in $\Stab^{< \rho}_G(A)$, we have
\[ d_{A}(g_1,g_2)\in\{d_{A}(g_1,g_3)+ d_{A}(g_3,g_2),|d_{A}(g_1,g_3)- d_{A}(g_3,g_2)|\}.\]
\end{proposition}
\begin{proof}
For $i \in \{1, 2, 3\}$, $g_i$ is in $\Stab^{< \rho}_G(A)$ by assumption, so $d_A(\id, g_i)< \rho/2$. Hence, for $i, j \in \{1, 2, 3\}$ we have
\[
d_A(g_i, g_j)< \rho\  \text{ and } \ \mu_G(g_iA \cap g_jA)>0.
\]
Applying Lemma~\ref{lem: equal local to global}(ii), we get $\sigma$-compact $A''' \subseteq A$ with $\mu_G(A'''H) \neq \varnothing$ such that for each $a \in A'''H$, we have
$$  \mu_{G}(g_iA \cap g_jA) =  \mu_{G/H}(\pi A)|\zeta_{g^{-1}_ia} - \zeta_{g^{-1}_ja}| \text{ for } i,j \in \{1, 2, 3\}.    $$
The desired conclusion immediately follows.
\end{proof}

\subsection{Almost linear pseudometric from near minimal expansions}\label{subsec: almost linear fiber}

In this subsection, $G$ is always a  \emph{connected compact} Lie group.  Let $H$ be a closed subgroup of $G$, and $H$ is isomorphic to the one dimension torus $\TT$. Throughout the subsection, $A,B\subseteq G$ are $\sigma$-compact subsets such that 
\[
\kappa/2<\mu_G(A) < 2\kappa \text{ and } \mu_G(B) = \kappa,
\]
and $\dis_G(A,B)\leq \eta\kappa$ for some constant $\eta>0$, that is
\[
\mu_G(AB)\leq \mu_G(A)+\mu_G(B)+\eta\kappa.
\]
In this section, we assume $\eta\leq 10^{-12}$. We did not try to optimise $\eta$, so it is very likely that by a more careful computation, one can make $\eta$ much larger. But we believe this method does not allow $\eta$ to be very close to $1$.

 By Fact~\ref{fact:inverse theorem torus}, let $\tau=2$, and $c_B=c(\tau)$ be the constant obtained from the theorem. In this subsection, we consider the case when 
\[
\max\{\mu_H(A\cap aH), \mu_H(B\cap Hb)\}<c_B
\]
 for all $a,b\in G$. The proofs in this section is more involved compared to the equality case, and the main difficulty is to control the error term coming from the nearly minimally expanding pairs. For the readers who do not care the exact quantitative bound on the error terms, one can always view $\eta$ as an infinitesimal element, then one can use equalities to replace all the inequalities in the proofs by pretending to take the standard part, and apply the methods given in the previous section.

 Towards showing that sets $A$ and $B$ behave rigidly, our next theorem shows that most of the nonempty fibers in $A$ and $B$ have the similar lengths, and the majority of them behaves rigidly fiberwise. 

\begin{theorem}[Near rigidity fiberwise]\label{thm: fibers of same length 1newnew} There is a continuous surjective group homomorphism $\chi:H\to\TT$, two compact intervals $I, J\subseteq \TT$ with 
\[ \mu_\TT(I)=\frac{\mu_G(A)}{\mu_{G/H}(\pi A)}\  \text{ and  } \ \mu_\TT(J)=\frac{\mu_{H}(B)}{\mu_{H \backslash G}(\widetilde{\pi} B)}.
\]
$\sigma$-compact $A'\subseteq A$ and $B'\subseteq B$ with 
\[
\mu_{G/H}(\pi A')>99\mu_{G/H}(\pi A)/100\  \text{ and } \ \mu_{H \backslash G}(\widetilde{\pi} B')>99\mu_{H \backslash G}(\widetilde{\pi} B)/100,
\] 
and   a constant $\nu\leq 10^{-10}$ such that the following statements hold:
\begin{enumerate}[\rm (i)]
\item  we have 
\[
\frac{1}{1+\eta}\mu_{H \backslash G}(\widetilde{\pi}B) \leq  \mu_{G/H}(\pi A) \leq  (1 +\eta) \mu_{H \backslash G}(\widetilde{\pi}B) .
\]
   \item For every $a$ in $A'H$, 
    \[
    (1-\nu) \frac{\mu_G(A)}{\mu_{G/H}(\pi A)}\leq \mu_H(A\cap aH)\leq (1+\nu) \frac{\mu_G(A)}{\mu_{G/H}(\pi A)},
    \]
    and there is $\zeta_{a} \in \TT$ with 
    \[ 
       \mu_H\big((A\cap aH)\tri a\chi^{-1}(\zeta_{a}+I)\big)<\nu\min\Big\{\frac{\mu_G(A)}{\mu_{G/H}(\pi A)},\frac{\mu_G(B)}{\mu_{H\backslash G}(\widetilde{\pi} B )}\Big\}.
      \] 
       \item For every $b$ in $HB'$, 
    \[
    (1-\nu) \frac{\mu_G(B)}{\mu_{H\backslash G}(\widetilde{\pi} B )}\leq \mu_H(B\cap Hb)\leq (1+\nu) \frac{\mu_G(B)}{\mu_{H\backslash G}(\widetilde{\pi}B )},
    \]
    and there is $\widetilde{\zeta}_{b}\in \TT$ with
     \[  
       \mu_H\big((B\cap Hb)\tri \chi^{-1}(\widetilde{\zeta}_{b}(B)+J)b\big)<\nu\min\Big\{\frac{\mu_G(A)}{\mu_{G/H}(\pi A)},\frac{\mu_G(B)}{\mu_{H\backslash G}(\widetilde{\pi} )}\Big\}. 
    \]
\end{enumerate}
\end{theorem}
\begin{proof}
Without loss of generality, we assume that $\mu_{G/H}(\pi A)\geq\mu_{H \backslash G}(\widetilde{\pi}B)$. Let $\beta$ be a constant such that $\beta<(\kappa/800\mu_{H\backslash G}(\widetilde{\pi} B))$.  Obtain $b^* \in G$ such that 
$$\mu_H(B\cap Hb^*)\geq \sup_b\mu_H(B\cap Hb)-\beta  \text{ for all } b\in G,$$ 
and the fiber $B\cap Hb^*$ has at least the average length, that is
\begin{equation}\label{eq:choiceb_0}
    \mu_H(B\cap Hb^*)\geq\mathbb E_{b\in BH}\mu_H(B\cap Hb)=\frac{\mu_G(B)}{\mu_{H\backslash G}(\widetilde{\pi} B)}.
\end{equation}
Set $\delta=100\eta\kappa/\mu_{G/H}(\pi A)$. As $\eta\leq 10^{-12}$, we get $\nu\leq 10^{-10}$ such that 
\begin{equation}\label{eq: nu value}
    \delta<\nu\mu_G(A)/\mu_{G/H}(\pi A).
\end{equation} 
Set
\[  
N = \{ a\in AH : \dis_H(A\cap aH,B\cap Hb^*)>\delta  \}.
\]
Note that $N$ is measurable by Lemma~\ref{lem: mesurability}.
By Lemma~\ref{lem:int} we have
\begin{align*}
&\ \mu_G\big(A (B \cap Hb^*)\big)\\
=&\, \int_{N} \mu_H\big(( A \cap aH )(B \cap Hb^*)\big) \d\mu_G(a)+ \int_{G \setminus N} \mu_H\big(( A \cap aH )(B \cap Hb^*)\big) \d\mu_G(a).
\end{align*}
Since $A\cap aH$ is nonempty for every $a\in AH$, using Kemperman's inequality on $H$ we have that $\mu_G(A(B\cap Hb^*))$ is at least
\[
\int_{N} \big(\mu_H( A \cap aH )+\mu_H(B \cap Hb^*)+\delta\big) \d\mu_G(a)
+ \int_{G\setminus N} \big(\mu_H( A \cap aH )+\mu_H(B \cap Hb^*)\big) \d\mu_G(a).
\]
Suppose we have $\mu_G(N)>\mu_{G/H}(\pi A)/100$. Therefore, by the choice of $b^*$ we get
\begin{align}
\mu_G\big(A (B \cap Hb^*)\big)&>\mu_G(A)+\frac{\delta\mu_{G/H}(\pi A)}{100}+\mu_H(B\cap Hb^*)\mu_{G/H}(\pi A)\label{eq:b^*}\\
& \geq\mu_G(A)+\mu_G(B)\frac{\mu_{G/H}(\pi A)}{\mu_{H\backslash G}(\widetilde{\pi} B)}+\eta\kappa.\nonumber
\end{align}
Since $\mu_{G/H}(\pi A)\geq \mu_{H\backslash G}(\widetilde{\pi} B)$, and $A(B\cap Hb^*)\subseteq AB$,  we have
\begin{equation*}
  \mu_G(AB)>\mu_G(A)+\mu_G(B)+\eta\kappa. 
\end{equation*}
This contradicts the assumption that $\dis_G(A,B)$ is at most  $\eta\kappa$. Using the argument in equation (\ref{eq:b^*}) with trivial lower bound on $\mu_G(N)$ we also get 
\begin{equation}\label{eq:AHandHB}
  \mu_{H\backslash G}(\widetilde{\pi} B)\leq \mu_{G/H}(\pi A)\leq \big(1+\eta\big)\mu_{H\backslash G}(\widetilde{\pi} B),  
\end{equation}
which proves (i). 

From now on, we assume that $\mu_G(N)\leq \mu_{G/H}(\pi A)/100$. Since $\dis_G(A,B)$ is at most $\eta\kappa$, by  (\ref{eq:b^*}) again (using trivial lower bound on $\mu_G(N)$), we have
\[
\mu_H(B\cap Hb^*)\mu_{H\backslash G}(\widetilde{\pi} B)\leq \mu_G(B)+\eta\kappa,
\]
and this in particular implies that for every $b\in G$,  we have
\[
\mu_H(B\cap Hb)\leq\frac{\mu_G(B)}{\mu_{H\backslash G}(\widetilde{\pi} B)}+\frac{\eta\mu_G(B)}{\mu_{H\backslash G}(\widetilde{\pi} B)}+\eta<\big(1+\nu \big)\frac{\mu_G(B)}{\mu_{H\backslash G}(\widetilde{\pi} B)}.
\]
Thus there is $Y\subseteq B$ with $\mu_G(HY)<  \mu_{H\backslash G}(\widetilde{\pi} B)/100$  such that for every $b\in HY$, 
\[
\mu_H(B\cap Hb)\geq \frac{\mu_G(B)}{\mu_{H\backslash G}(\widetilde{\pi} B)}-100\frac{\eta\mu_G(B)}{\mu_{H\backslash G}(\widetilde{\pi} B)}-100\eta>\big(1-\nu\big)\frac{\mu_G(B)}{\mu_{H\backslash G}(\widetilde{\pi} B)}.
\]

Next, we apply the similar argument to $A$. Let $\alpha<(\mu_G(A)-2\eta\kappa)/200\mu_{G/H}(\pi A)$, and choose $a^*$ such that $\mu_H(A\cap a^*H)>\mu_H(A\cap aH)-\alpha$ for all $a\in AH$, and
\[
\mu_H(A\cap a^*H)\geq\mathbb E_{a\in AH}\mu_H(A\cap aH)=\frac{\mu_G(A)}{\mu_{G/H}(\pi A)}.
\] 
Let $N'\subseteq HB$ such that for every $b$ in $N'$, $\dis_H(A\cap a^*H,B\cap Hb) \geq \delta$. Hence we have
\begin{align}
&\ \mu_G\big((A\cap a^*H) B\big)\nonumber\\
=&\, \int_{N'} \mu_H\big(( A \cap a^*H )(B \cap Hb)\big) \d\mu_G(b)+ \int_{G \setminus N'} \mu_H\big(( A \cap a^*H )(B \cap Hb)\big) \d\mu_G(b)\nonumber\\
\geq&\, \mu_G(B)+\mu_H(A\cap a^*H)\mu_{H\backslash G}(\widetilde{\pi} B)+\delta\mu_G(N')\label{eq: lambda'}\\
\geq&\, \mu_G(A)+\mu_G(B)-\frac{\mu_G(A)\eta\kappa}{\mu_G(A)+\eta\kappa}+\delta\mu_G(N').\nonumber
\end{align}
By the fact that $\mu_G(AB)\geq\mu_G((A\cap a^*H)B)$ and $\dis_G(A,B)\leq \eta\kappa$, we have that
\[
\mu_G(N')\leq\frac{1}{200}\mu_{G/H}(\pi A)\leq\frac{1}{150}\mu_{H\backslash G}(\widetilde{\pi} B).
\]
Now, by equation (\ref{eq: lambda'}), and the choice of $a^*$, we have that for all $a\in AH$,
\[
\mu_H(A\cap aH)\leq\frac{\mu_G(A)}{\mu_{G/H}(\pi A)}+\frac{\eta\mu_G(A)}{\mu_{H\backslash G}(\widetilde{\pi} B)}+\alpha<\big(1+\nu\big)\frac{\mu_G(A)}{\mu_{G/H}(\pi A)}.
\]
Again by equation (\ref{eq: lambda'}), there is $X\subseteq A$ with $\mu_G(XH)\leq\mu_{G/H}(\pi A)/200$, such that for every $a\in X$,
\[
\mu_H(A\cap aH)\geq\frac{\mu_G(A)}{\mu_{G/H}(\pi A)}-200\frac{\eta\mu_G(A)}{\mu_{H\backslash G}(\widetilde{\pi} B)}-200\alpha\geq\big(1-\nu\big)\frac{\mu_G(A)}{\mu_{G/H}(\pi A)}.
\]

Let $A'=A\cap(AH\setminus(XH\cup N'))$, and let $B'=B\cap (HB\setminus(HY\cup N))$. Then 
\[
\mu_G(A')\geq \frac{99}{100}\mu_{G/H}(\pi A),\quad \mu_G(B')\geq \frac{99}{100}\mu_{H\backslash G}(\widetilde{\pi} B),
\]
Let $a$ be in $A'H$ and $b$ be in $B'H$. By our construction, the first parts of (ii) and (iii) are satisfied. Moreover, both $\dis_H(A\cap aH,B\cap Hb^*)$ and $\dis_H(A\cap a^*H,B\cap Hb)$ are at most $\delta$. By the way we construct $A'$ and $B'$, we have that $a^*\in A'$ and $b^*\in B'$. Recall that $\mu_H(A\cap aH),\mu_H(B\cap Hb)<\lambda$ for every $a,b\in G$. Therefore, by the inverse theorem on $\TT$ (Fact~\ref{fact:inverse theorem torus}), and Lemma~\ref{lem:fromsmalltobig}, there is a group homomorphism $\chi:H\to\TT$, and two compact intervals $I_A$, $I_B$ in $\TT$, with
\[
\mu_\TT(I_A)=\frac{\mu_G(A)}{\mu_{G/H}(\pi A)},\quad \mu_\TT(I_B)=\frac{\mu_G(B)}{\mu_{H\backslash G}(\widetilde{\pi} B)},
\]
such that for every $a\in A'$ and $b\in B'$, there are elements  $\zeta_{a},\widetilde{\zeta}_{b}(B)$ in $\TT$, and
\[
\mu_H(A\cap aH\tri a\chi^{-1}(\zeta_{a}+I_A))<\delta,\quad 
\mu_H(B\cap Hb\tri \chi^{-1}(\widetilde{\zeta}_{b}+I_B)b)<\delta,
\]
and the theorem follows from \eqref{eq: nu value}.
\end{proof}

  The next corollary gives us an important fact of the structure of the projection of $A$ on $G/H$.

\begin{corollary}[Global structure of $AH$]\label{cor:AH=1} Suppose $\mu_G(A)=\kappa$. Then for all $g \in \Stab^{\kappa/2}_G(A)$, we have
 $$\mu_G(AH\tri gAH)\leq 10\eta \mu_{G/H}(\pi A).$$
\end{corollary}

\begin{proof}
By Lemma~\ref{lem:iep}, $\dis_G(A\cup gA, B)\leq 2\eta\kappa$, and by Theorem~\ref{thm: fibers of same length 1newnew}, we have
\[
\frac{1}{1+2\eta}<\frac{\mu_G(AH\cup gAH)}{\mu_{H\backslash G}(\widetilde{\pi} B)}<1+2\eta.
\]
On the other hand, since $\dis_G(A,B)$ and $\dis_G(gA,B)$ are at most $\eta\kappa$, we have 
\[
\frac{1}{1+\eta}<\frac{\mu_{G}(AH)}{\mu_{H\backslash G}(\widetilde{\pi} B)}, \frac{\mu_G(gAH)}{\mu_{H\backslash G}(\widetilde{\pi} B)}<1+\eta
\]
Since $\eta< 10^{-12}$, by inclusion-exclusion principle, we get the desired conclusion. 
\end{proof}

Given $\tau<1/4$, we use $I(\tau)$ to denote the open interval $(-\tau,\tau)$ in $\TT$. As in Remark~\ref{rem}, one can similarly consider
\[
\delta_{a,A}(g_1,g_2)=\zeta_{g_1^{-1}a}-\zeta_{g_2^{-1}a}
\]
for a fix $a\in A$. Recall that
\[
d_A=\mu_G(g_1A\setminus g_2A)
\]
is a pseudometric (Proposition~\ref{prop: construct pseudo-metric}). The next lemma can be seen as an approximate version of Lemma~\ref{lem: equal local to global}, which gives the connection between $\delta_{a,A}(g_1,g_2)$ and $d_A(g_1,g_2)$.  

\begin{lemma}[From local to global] \label{lem: 2 elements near rigidity}
Suppose $\mu_G(A)=\kappa$, $g_1, g_2 \in \Stab^{\kappa/4}_G(A)$, and $\chi: H \to \TT$, $I \subseteq \TT$, $\nu$ are as in Theorem~\ref{thm: fibers of same length 1newnew}. Then there is a $\sigma$-compact $A'' \subseteq A$ with \[\mu_{G/H}(\pi A'') =96\mu_{G/H} (\pi A)/100\]
such that for all $a \in A''H$, the following holds
\begin{enumerate}[\rm (i)]
    \item there are $\zeta_{g^{-1}_1a},  \zeta_{g^{-1}_2a} \in \TT$ such that for $i \in \{1,2\}$;
    \[ \mu_H\big((A\cap aH)\tri a\chi^{-1}(\zeta_{g_{i}^{-1}a}+I)\big)<\frac{\nu\kappa}{\mu_{G/H}(\pi A)}.\]
    \item with any $\zeta_{g^{-1}_1a}, \zeta_{g^{-1}_2a} $ satisfying (i) and $\delta_{a, A}(g_1, g_2)= \zeta_{g^{-1}_2a}- \zeta_{g^{-1}_1a}$, we have
$$ d_A(g_1, g_2) \in  \mu_{G/H}(\pi A)|\delta_{a, A}(g_1, g_2)| + I\big(20\nu\kappa\big).    $$
\end{enumerate}
\end{lemma}

\begin{proof}
Obtain $A', I, J$ as in Theorem~\ref{thm: fibers of same length 1newnew}. Let $A'_1 \subseteq G$ be the $\sigma$-compact set
\[
\{ a \in A: g_1^{-1}a, g_2^{-1}a \in A'H      \}.
\]
It is easy to see that $\mu_{G/H}(\pi A'_1) > 98/100\mu_{G/H} (\pi A)$. Fix $a \in A'_1$, by Theorem~\ref{thm: fibers of same length 1newnew} again
 we then have 
 \[
  \mu_H\big((A\cap g_1^{-1}aH)\tri g_1^{-1}a\chi^{-1}(\zeta_{g_1^{-1}a}+I)\big)<\nu\frac{\kappa}{\mu_{G/H}(\pi A)},
 \]
 and
  \[
  \mu_H\big((A\cap g_2^{-1}aH)\tri g_2^{-1}a\chi^{-1}(\zeta_{g_2^{-1}a}+I)\big)<\nu\frac{\kappa}{\mu_{G/H}(\pi A)}.
 \]
Multiplying by $g_1$ and $g_2$ respectively, we get (i) when $A''\subseteq A'_1$. 

As $g_1, g_2 \in \Stab^{\kappa/4}_G(A)$, we have $\mu_{G}(g_1A \cap g_2A)>0$. Note that $\dis_G(g_1A\cap g_2A, B)\leq 2\eta\kappa$ by Lemma~\ref{lem:iep}. By Theorem~\ref{thm: fibers of same length 1newnew}(i), we have
\begin{equation}\label{eq: compute intersection}
\mu_G((g_1A\cap g_2A)H)\geq \frac{1}{1+2\eta}\mu_G(HB) \geq \frac{1}{(1+2\eta)(1+\eta)}\mu_{G}(AH). 
\end{equation}
Also Theorem~\ref{thm: fibers of same length 1newnew}(ii) implies that there is $A'_2\subseteq g_1A\cap g_2A$ with $\mu_{G/H}(\pi A'_2)>99\mu_{G/H}(\pi (g_1A\cap g_2A))/100$, such that for all $a\in A'_2H$, we have  
\begin{equation}\label{eq: pseu first}
\big(1-\nu\big)  \frac{\mu_G(g_1A)}{\mu_{G/H}(\pi(g_1A))} \leq  \mu_H(g_1A\cap aH) \leq (1+\nu)\frac{\mu_G(g_1A)}{\mu_{G/H}(\pi(g_1A))}\ 
\end{equation}
and
\[ (1-2\nu)\frac{\mu_G(g_1A\cap g_2A)}{\mu_{G/H}(\pi(g_1A\cap g_2A))} \leq \mu_H(g_1A\cap g_2A\cap aH) \leq (1+2\nu)\frac{\mu_G(g_1A\cap g_2A)}{\mu_{G/H}(\pi(g_1A\cap g_2A))}.
\]

Note that 
$\pi(g_1A\cap g_2A) \subseteq  \pi(g_1A) \cap \pi(g_2A).$
However, \eqref{eq: compute intersection} together with Corollary~\ref{cor:AH=1} give us
\begin{align*}
&\mu_{G/H}( \pi(g_1A) \tri \pi A  ) \leq 10\eta \mu_{G/H}(\pi A),\\
&\mu_{G/H}( \pi(g_1A\cap g_2A) \tri \pi A  ) \leq 14\eta \mu_{G/H}(\pi A). 
\end{align*}
Hence, Let $A''=A'_1H\cap A'_2H\cap A$. Note that $\mu_{G/H}(\pi A'')\geq 96\mu_{G/H}(\pi A)/100$, and for all $a \in A''H$, by \eqref{eq: compute intersection}
\begin{equation}\label{eq: pseu second}
(1-5\nu) \frac{\mu_G(g_1A\cap g_2A)}{\mu_{G/H}(\pi(A))} \leq \mu_H(g_1A\cap g_2A\cap aH) \leq (1+5\nu) \frac{\mu_G(g_1A\cap g_2A)}{\mu_{G/H}(\pi(A))}.
\end{equation}
Finally, recall that $d_A(g_1,g_2) =\mu_G(g_1A)-\mu_G(g_1A \cap g_2A)$. Note that
\begin{align*}
\mu_{G/H}(\pi A)|\delta_{a,A}(g_1,g_2)|&=\mu_{G/H}(\pi A)|\zeta_{g_1^{-1}a}-\zeta_{g_2^{-1}a}|\\
&\in \mu_H(g_1A\cap aH)-\mu_H(g_1A\cap g_2A\cap aH)+I(2\nu\kappa).
\end{align*}
Hence, by \eqref{eq: pseu first} and \eqref{eq: pseu second},
\begin{align*}
d_A(g_1,g_2)&\geq \mu_{G/H}(\pi A)\left(\frac{\mu_H(g_1A\cap aH)}{1+\nu}-\frac{\mu_H(g_1A\cap g_2A\cap aH)}{1-5\nu}\right)\\
&\geq \mu_{G/H}(\pi A)\big(\mu_H(g_1A\cap aH)-\mu_H(g_1A\cap g_2A\cap aH)\big)-18\nu\kappa\\
&\geq \mu_{G/H}(\pi A)|\delta_{a,A}(g_1,g_2)|-20\nu\kappa. 
\end{align*}
The upper bound on $d_A(g_1,g_2)$ can be computed using a similar method, and this finishes the proof.
\end{proof}


We now deduce properties of the pseudometric $d_A$. Besides the almost linearity, we also need the path monotonicity of the pseudometric to control the errors in the almost homomorphism obtained in Section~\ref{section:pseudometric}. 

\begin{proposition}[Almost linearity and path monotonicity of the pseudometric]\label{prop: almost linear metric from local}
Assume that $\mu_G(A)=\kappa$, and let $\nu$ be as in Theorem~\ref{thm: fibers of same length 1newnew}. Then we have the following:
\begin{enumerate}[\rm (i)]
    \item For all $g_1,g_2,g_3$ in $\Stab^{\kappa/2}_G(  A )$, we have 
    \[
    d_A(g_1,g_2)\in |\pm d_A(g_1,g_3)\pm d_A(g_2,g_3)|+I\big(60\nu\kappa\big),
    \]
       \item Let $\mathfrak{g}$ be the Lie algebra of $G$, and let $\exp: \mathfrak{g}\to G$ be the exponential map. For every $X\in \mathfrak{g}$, either 
       $$d_A(\exp(Xt), \id)<\kappa/4 \text{ for all } t\in\RR$$ 
       or  there is $t_0>0$ with $d_A(\exp(Xt_0), \id)\geq \kappa/4$  such that for every $t\in[0,t_0]$, 
    \begin{align*}
   &\, d_A(\exp(X(t+t_0)),\id)\\
    \in&\, d_A(\exp(X(t+t_0)),\exp(Xt_0))+d_A(\exp(Xt_0),\id)+I\big(180\nu \kappa\big).
    \end{align*}
\end{enumerate}
\end{proposition}
\begin{proof}
We first prove (i). Let $\chi$ and $I$ be as in Theorem~\ref{thm: fibers of same length 1newnew}. Applying Lemma~\ref{lem: 2 elements near rigidity}, we get $a \in  AH$ and  $\zeta_{g^{-1}_1a},  \zeta_{g^{-1}_2a}, \zeta_{g^{-1}_1a} \in \TT$ such that for $i \in \{1, 2, 3\}$, we have
\[ \mu_H\big((A\cap aH)\tri a\chi^{-1}(\zeta_{g_{i}^{-1}a}+I)\big)<\frac{\nu\kappa}{\mu_{G/H}(\pi A)},\]
and for $i, j \in \{1, 2, 3\}$, we have
$$ d_A(g_i, g_j) \in  \mu_{G/H}(\pi A)|\delta_{a, A}(g_i, g_j)| + I\big(20\nu\kappa\big).    $$
with $\delta_{a, A}(g_i, g_j)= \zeta_{g^{-1}_ja}- \zeta_{g^{-1}_ia}$.
As $\delta_{a, A}(g_1, g_2)= \delta_{a, A}(g_1, g_3)+ \delta_{a, A}(g_3, g_2)$, we get the desired conclusion.

Next, we prove (ii). 
Let $X\in \mathfrak{g}$, and suppose there is $t>0$ such that 
$$d_A(\exp(Xt),\id)\geq \kappa.$$
Using the continuity of $g \mapsto \mu_G(A\setminus gA)$ (Fact~\ref{fact: Haarmeasurenew}(vii)), we obtain $t_0>0$ such that $t_0$ the smallest positive real number with $d_A(\id, \exp(Xt_0))\geq \kappa/10$. Fix $t\in [0,t_0]$, and set 
$$g_0= \exp(Xt_0) \text{ and } g = \exp(Xt). $$
Note that $gg_0=g_0g$ as $g_0$ and $g$ are on the same one parameter subgroup of $G$. 
One can easily check that $g_0$, $g$, $g_0g$ are in $\Stab^{\kappa/2}_G(A)$. Again, let $\chi$ and $I$ be as in Theorem~\ref{thm: fibers of same length 1newnew} and apply Lemma~\ref{lem: 2 elements near rigidity} to get $a \in  AH$ and  $\zeta_{g^{-1}_ia} \in \TT$ for $g_i \in \{ \id, g, g_0, gg_0 \}$ such that   
\begin{equation} \label{8.18.1}
 \mu_H\big((A\cap aH)\tri (a\chi^{-1}(\zeta_{g_{i}^{-1}a}+I)\big)<\frac{\nu\kappa}{\mu_{G/H}(\pi A)},
\end{equation}
and  for  $g_i, g_j \in \{ \id, g, g_0, gg_0 \}$, we have
\begin{equation} \label{8.18.2}
 d_A(g_i, g_j) \in  \mu_{G/H}(\pi A)|\delta_{a, A}(g_i, g_j)| + I\big(20\nu\kappa\big)   
\end{equation} 

As $gg_0= g_0g$,  we have
\begin{equation} \label{18.8.3}
   \delta_{a, A}(  \id, g)+\delta_{a,A}(g,gg_0) = \delta_{a, A}(  \id, g_0g) = \delta_{a, A}(  \id, g_0) + \delta_{a, A}(  g_0, g_0g)   
\end{equation}
Using \eqref{8.18.1}, \eqref{8.18.2}, and the fact that $d_A(\id, g_0) = d_A(g, gg_0) $, we get
\begin{align*}
\delta_{a,A}(g, gg_0)\in \pm \delta_{a,A}(\id, g_0)+I\big(60\nu\kappa\big). 
\end{align*}
By a similar argument, $
\delta_{a,A}(g_0, g_0g)\in \pm \delta_{a,A}(\id, g)+I\big(60\nu\kappa\big)$.
Combining with \eqref{18.8.3}, we get that
 $\delta_{a,A}(\id,gg_i)$ is in both
\[
\delta_{a,A}(\id, g_0)\pm \delta_{a,A}(\id,g)+I\big(60\nu\kappa\big)
\]
and
\[
\delta_{a,A}(\id, g)\pm \delta_{a,A}(\id,g_0)+I\big(60\nu\kappa\big).
\]
Using the fact that $\nu<10^{-6}$, and considering all the four possibilities, we deduce 
\[
\delta_{a,A}(\id, gg_0) =\delta_{a,A}(\id, g_0)+ \delta_{a,A}(\id,g)+I\big(120\nu\kappa\big).
\]
Applying \eqref{8.18.1} and \eqref{8.18.2} again, we get the desired conclusion.
\end{proof}

\section{Proof of the main theorems}\label{sec: 9}
 In this section, we put everything together to prove   some slight generalizations of Theorem~\ref{thm:mainequal} and  Theorem~\ref{thm:mainapproximate} as well as Theorem~\ref{thm: maingap}.
 
\subsection{Minimal expansions in noncompact groups}\label{sec: 9.1}
   The next theorem is a restatement of Theorem~\ref{thm:mainequal}(vi), which is the main result in this subsection. 

\begin{theorem}[Main theorem for noncompact groups] Suppose $G$ is a connected unimodular noncompact group, $\widetilde{\mu}_G$ is the inner measure corresponding to a Haar measure of $G$, the sets $A, B \subseteq G$ have $0< \widetilde{\mu}_G(A), \widetilde{\mu}_G(B)< \infty$, and 
\[
\widetilde{\mu}_G(AB)=\widetilde{\mu}_G(A)+\widetilde{\mu}_G(B).
\]
Then there is a continuous surjective group homomorphism $\chi:G\to\RR$ with compact kernel, and compact intervals $I, J \subseteq \RR$ with $\mu_\RR(I) = \widetilde{\mu}_G(A)$ and $\mu_\RR(J) =\widetilde{\mu}_G(B)$, such that
\[
A\subseteq\chi^{-1}(I),\quad\text{and}\quad B\subseteq\chi^{-1}(J). 
\]
If $A$ and $B$ are, moreover, compact, then we also have $A=\chi^{-1}(I)$ and $B=\chi^{-1}(J)$. 
\end{theorem}
\begin{proof}
We first treat the case where both $A$ and $B$ are $\sigma$-compact. In this case, we can simply use the Haar measure $\mu_G$ instead of the inner measure $\widetilde{\mu}(G)$. It is easy to see that the last assertion of this theorem is an immediate consequence of the conclusion for this special case.

By the Gleason--Yamabe Theorem (Fact~\ref{fact:Yamabe}), there
is a connected compact normal subgroup $H$ of $G$ such that $L=G/H$ is a connected Lie group. By Fact~\ref{fact: unimodular}, $L$ is unimodular. Let $\pi: G\to L$ be the quotient map. Using Corollary~\ref{cor:transfer to Lie noncompact}, there are $\sigma$-compact subsets $A',B'$ of $L$ such that
\[
\mu_G(A\tri \pi^{-1}(A'))=0\quad\text{and}\quad \mu_G(B\tri \pi^{-1}(B'))=0,
\]
and we still have $\mu_L(A'B')=\mu_L(A')+\mu_L(B')$. 

When $L$ is a simple Lie group, by the Iwasawa decomposition, $L=KAN$ where $AN$ is a simply connected closed nilpotent group. Thus $AN$ contains $\RR$ as a closed subgroup, and so does $L$. When $L$ is not simple, then $L$ contains a connected closed normal subgroup $H$, and by Fact~\ref{fact: unimodular}, $H$ is unimodular, and of smaller dimension. Applying induction on dimension, we may assume $H$ satisfies the statement of the theorem. For $g_1,g_2\in L$, let 
\[
d_A(g_1,g_2)=\mu_L(A)-\mu_L(g_1A\cap g_2A).
\]
Then by Proposition~\ref{prop: construct pseudo-metric}, $d_A$ is a pseudometric on $L$, with radius $\mu_G(A)$. By Proposition~\ref{prop: equal pseudometric linear}, $d_A$ is locally linear. Using Proposition~\ref{prop: strong linear}, we have $\ker d_A$ is a compact normal subgroup of $L$, and $L/\ker d_A$ is isomorphic to $\RR$ as topological groups. Note that $\ker d_A$ is the same as its identity component $(\ker d_A)_0$, because $L/(\ker d_A)_0$ is a cover group of $\RR$, which must be $\RR$ because it is already simply connected. By the third isomorphism theorem (Fact~\ref{fact: first iso thm}), $\RR$ is a quotient group of $G$, and the corresponding quotient map $\chi$ has a connected compact kernel. Applying Corollary~\ref{cor:transfer to Lie noncompact} again, as well as the inverse theorem on $\RR$ (Fact~\ref{fact:Brunnd=1 R}),  there are $I,J$ compact intervals of $\RR$ such that
\[
\mu_G(A\tri \chi^{-1}(I))=0\quad\text{and}\quad \mu_G(B\tri \chi^{-1}(J))=0.
\]
This also implies that $\mu_G(A)=\mu_\RR(I)$ and $\mu_G(B)=\mu_\RR(J)$. 

Suppose $g\in A$ and $g\notin \chi^{-1}(I)$. Since $I$ is compact, $\chi(g)\notin I$, and there is $\alpha>0$ such that the distance between $\chi(g)$ and the nearest element in $I$ is at least $\alpha$. Thus
\[
\mu_\RR(\chi(g)\chi(B)\setminus \chi(A)\chi(B))\geq \alpha. 
\]
and this implies that $\mu_G(gB\setminus \chi^{-1}(I)\chi^{-1}(J))\geq\alpha$. Therefore,
\[
\mu_G(AB)\geq \mu_G(\chi^{-1}(I)\chi^{-1}(J))+\mu_G(gB\setminus \chi^{-1}(I)\chi^{-1}(J))\geq \mu_G(A)+\mu_G(B)+\alpha,
\]
and this contradicts the fact that $(A,B)$ is minimally expanding. Hence, we have $A\subseteq \chi^{-1}(I)$ and $B\subseteq \chi^{-1}(J)$ as desired. 

Now we treat the general case without the assumption that $A$ and $B$ are $\sigma$-compact. By inner regularity, we obtain $\sigma$-compact $\widetilde{A} \subseteq A$ and $\widetilde{B} \subseteq B$ such that 
\[
\mu_G(\widetilde{A})=\widetilde{\mu}_G(A)\  \text{ and }\ \mu_G(\widetilde{B})=\widetilde{\mu}_G(B).
\]
Then $\widetilde{A}\widetilde{B}$ is a $\sigma$-compact subset of $AB$. From $\widetilde{\mu}_G(AB)=\widetilde{\mu}_G(A)+\widetilde{\mu}_G(B)$, we obtain
\[  \mu_G(\widetilde{A}\widetilde{B})=\mu_G(\widetilde{A})+ \mu_G(\widetilde{B}).\]
Obtain $\chi$, $I$, and $J$ for $\widetilde{A}$ and $\widetilde{B}$ as in our earlier proven special case. Argue as in the preceding paragraph, we get $A \subseteq \chi^{-1}(I)$ and $B\subseteq \chi^{-1}(I)$ as desired.
\end{proof}

  We remark that the same argument almost works for compact groups when $(A,B)$ is a minimal expansion pair, except that when we choose the closed subgroup $H$, we need to choose one such that we are in the toric transversal scenario. This can be done by using Theorem~\ref{thm: Torictransversal} (See Section~\ref{sec: main compact}).

\subsection{Nearly minimal expansions in compact groups}\label{sec: main compact}

In this subsection, we prove the main theorems for compact groups. We first prove Theorem~\ref{thm: maingap}. 
\begin{proof}[Proof of Theorem~\ref{thm: maingap}]
Let $d>0$ be an integer, let $c_B$ be the real number fixed at the beginning of Section~\ref{subsec: almost linear fiber}, let $\nu$ be in Theorem~\ref{thm: fibers of same length 1newnew}. Let $G$ be a connected compact simple Lie group of dimension at most $d$, and $A$ is a compact subset of $G$ of measure at most $S$, and $\mu_G(A^2)<(2+\eta)\mu_G(A)$, where $S=S(d)$ is the constant in Theorem~\ref{thm: Torictransversal} (with $M=3$), and $\eta$ is the constant fixed in Section~\ref{subsec: almost linear fiber}. 

By Theorem~\ref{thm: Torictransversal}, when $A$ satisfies $\mu_G(A)<S$, there is a closed subgroup $H$ which is isomorphic to $\TT$, such that for all $g\in G$, $\mu_H(gA\cap H)<c_B$. We fix such a closed subgroup $H$ of $G$.  Let
\[
d_A(g_1,g_2)=\mu_G(A)-\mu_G(g_1A\cap g_2A).
\]
By Proposition~\ref{prop: construct pseudo-metric}, $d_A$ is a pseudometric. Since $\mu_G(A^2)<(2+\eta)\mu_G(A)$, Proposition~\ref{prop: almost linear metric from local} shows that $d_A$ is a $60\nu\mu_G(A)$-linear pseudometric, and it is $180\nu\mu_G(A)$-path-monotone. By Proposition~\ref{prop: localmonotoneimplyglobalmonotone}, $d_A$ is globally $1620\nu\mu_G(A)$-monotone. Let $\gamma=1620\nu\mu_G(A).$ Then $d_A$ is $\gamma$-monotone $\gamma$-linear, and of radius $\rho=\mu_G(A)/2$. As $\nu\leq 10^{-10}$, $10^6\gamma<\rho$, and Theorem~\ref{thm: homfrommeasurecompact} thus implies that there is a continuous surjective group homomorphism mapping $G$ to $\TT$, and this contradicts the fact that $G$ is simple. 
\end{proof}

  Now we are going to prove the inverse theorem. In Proposition~\ref{prop: struc on AB}, given a continuous surjective group homomorphism from $G$ to $\TT$, we obtain a structural characterization with further assumption that the images of both $A$ and $B$ are small. The next lemma says that, with the homomorphism obtained from almost linear pseudometric in Sections 7 and 8, both $A$ and $B$  should have small image. 

\begin{lemma}\label{lem: making projection small}
Suppose $G$ is a connected compact groups,  $\chi: G \to \TT$ is a continuous and surjective group homomorphism with connected kernel, $A, B \subseteq G$ are nonempty and $\sigma$-compact with 
$$ \dis_G(A,B)< \mu_G(A)+\mu_G(B) < \max\{\mu_G(A),\mu_G(B)\}<1/250.   $$
Suppose for every $g\in \ker(\chi)$ with $\mu_G(A\setminus gA)<\mu_G(A)/36$, we further have 
$$\mu_G(A\setminus gA)<\mu_G(A)/72.$$ Then $\mu_\TT(\chi(A))+\mu_\TT(\chi(B))<1/5$. 
\end{lemma}
\begin{proof}
Set $H=\ker(\chi)$. We first show that   $\sup_g\mu_H(A\cap gH)> 1/2$. Suppose to the contrary that $\sup_g\mu_H(A\cap gH)\leq 1/2$. Note that
\[
\frac{\mu_G(AH)}{\mu_G(A)}>\frac{1}{\sup_g \mu_H(A\cap gH)}. 
\]
Hence by Lemma~\ref{lem: trans along H}, for every $\ell>1/2$, there is $h\in H$ such that $\mu_G(A\cap hA)=\ell \mu_G(A)$, and in particular, there is $h\in H$ with
\[
\frac{35\mu_G(A)}{36}<\mu_G(A\cap hA)<\frac{71\mu_G(A)}{72},
\]
which contradicts the assumption. 

Now apply Lemma~\ref{lem: A=AH toric exp prepare}, we get
$
\mu_{\TT}(\chi A) +\mu_{\TT}(\chi B) \leq 50 ( \mu_G(A)+\mu_G(B) )=1/5
$
as desired.
\end{proof}

  With all tools in hand, we are going to prove the following theorem, which is a restatement of Theorem~\ref{thm:mainapproximate}
 and Theorem~\ref{thm:mainequal}(v) for compact groups.

\begin{theorem}[Main theorem for compact groups]\label{thm:section7main}
  Let $G$ be a connected compact group, and $A,B$ be compact subsets of $G$ with  $$0< \lambda=\min\{\mu_G(A),\mu_G(B),1-\mu_G(A)-\mu_G(B)\}.$$
  There is a constant $K=K(\lambda )$, not depending on $G$, such that for any $0\leq \varepsilon<1$, whenever we have $\delta\leq K\varepsilon$ and
  \[
  \widetilde{\mu}_G(AB)<\widetilde{\mu}_G(A)+\widetilde{\mu}_G(B)+\delta\min\{\widetilde{\mu}_G(A),\widetilde{\mu}_G(B)\},
  \]
there is a surjective continuous group homomorphism $\chi: G \to \TT$ together with two compact intervals $I,J\in \mathbb T$ with
 \[
 \mu_\TT(I)<(1+\varepsilon)\mu_G(A),\quad \mu_\TT(J)<(1+\varepsilon)\mu_G(B),
 \]
 and $A\subseteq \chi^{-1}(I)$, $B\subseteq\chi^{-1}(J)$.
 
If $A, B$ are, moreover, compact with
 $\mu_G(AB)=\mu_G(A)+\mu_G(B)$, then we also have $$A=\chi^{-1}(I) \text{  and } B=\chi^{-1}(J).$$
\end{theorem}
\begin{proof}
The latter part of the theorem follows immediately from the former part by the fact that open subsets of $G$ has positive $\mu_G$-measure.

We now prove the former part of the theorem. Consider first the case  $\varepsilon>0$. Let $\delta>0$ to be determined later. 
Suppose $\mu_G(A)\leq \mu_G(B)$, and 
$$\widetilde{\dis}_G(A,B)= \widetilde{\mu}_G(AB)-\widetilde{\mu}_G(A)-\widetilde{\mu}_G(A)= \delta\widetilde{\mu}_G(A).$$
By the inner regularity of $\widetilde{\mu}_G$, we obtain $\sigma$-compact $A_0 \subseteq A$ and $B_0 \subseteq B$ such that 
\[
\mu_G(A_0) = \widetilde{\mu}_G(A) \ \text{ and } \ \mu_G(B_0) = \widetilde{\mu}_G(B).\] As $A_0B_0$ is a $\sigma$-compact subset of $AB$, we have $\mu_G(A_0B_0) \leq \widetilde{\mu}_G(AB)$ and
\[ \dis_G(A_0, B_0)\leq \widetilde{\dis}_G(A,B) =\delta\mu_G(A_0). \]
Let $\tau$ be the constant from Proposition~\ref{prop: coarsetheorem}.
Using Lemma~\ref{lem:sml}, we obtain a constant $K_1=O_\lambda(1)$ and $\sigma$-compact $A_1,B_1 \subseteq G$ such that $\mu_G(A_1)=\mu_G(B_1)<\tau$, 
\[
\dis_G(A_1, B_1)<K_1\delta\mu_G(A_0),
\]
and both $\dis_G(A_1,B)$ and $\dis_G(A,B_1)$ are at most $K_1\delta\mu_G(A)$. Applying Proposition~\ref{prop: coarsetheorem}, we get a connected compact subgroup $H$ of $G$ with $G'=G/H$ a Lie group of dimension $O(1)$ and $\sigma$-compact  $A'_1,B'_1 \subseteq G'$, such that $\mu_L(A'_1)\leq\mu_L(B'_1)$,
\[
\dis_{G'}(A'_1, B'_1)<7K_1\delta\mu_G(A_0),
\]
and with $\pi: G\to G'$ the quotient map, we have
\begin{equation}\label{eq: approx main compact}
\max\{\mu_{G'}(A_1\tri\pi^{-1}(A'_1)),\mu_G(B_1\tri\pi^{-1}(B'_1))\}<3K_1\delta\mu_G(A_0).
\end{equation}

Let $S$ be the constant from Theorem~\ref{thm: Torictransversal} with $M=3$. We apply Lemma~\ref{lem:sml} again to get a constant $K_2$ and $\sigma$-compact $A'_2,B'_2 \subseteq G'$, such that 
$$\mu_{G'}(A'_2)= \mu_{G'}(B'_2)=S, $$ and all of $\dis_{G'}(A'_2,B'_2)$, $\dis_{G'}(A'_1,B'_2)$, and $\dis_{G'}(A'_2,B'_1)$ are at most $K_1K_2\delta\mu_G(A_0)$.

Let $c_B$ be the constant fixed in the beginning of Section~\ref{subsec: almost linear fiber}. By Theorem~\ref{thm: Torictransversal}, we obtain a one-dimensional torus subgroup $H'$ of $G'$, such that for every $g'
\in G'$
\[
\max\{\mu_{H'}(A'_2\cap g'H'), \mu_T(B'_2\cap H'g')\}<c_B. 
\]
 Similarly as what we did in the proof of Theorem~\ref{thm: maingap}, we define the pseudometric $d: G'\times G' \to \RR$ by
 $$ d(g'_1, g_2') = \min\{ S/2,  \mu_{G'}(A_2') - \mu_{G'}(g_1A_2 \cap g_2A_2)\}, $$
 this is indeed a pseudometric by Proposition~\ref{prop: construct pseudo-metric}.

 Suppose $K_1K_2\delta\mu_G(A)<\eta S$ where $\eta$ is the fix constant from Section~\ref{subsec: almost linear fiber}.  By Proposition~\ref{prop: almost linear metric from local} and Proposition~\ref{prop: localmonotoneimplyglobalmonotone},  we obtain a $\gamma$-linear $\gamma$-monotone pseudometric, where $\gamma=1620\nu S$, and $\nu$ is  from Theorem~\ref{thm: fibers of same length 1newnew}. Therefore, Theorem~\ref{thm: homfrommeasurecompact} gives us a surjective continuous group homomorphism $\phi:G'\to\TT$, such that for every $g'\in\ker\phi\cap N(\lambda)$ with $\lambda=\rho/36$, we have $\mu_{G'}(A'_2\setminus gA'_2)<S/72$.  
 By replacing $\phi$ with the quotient map $G' \to G'/(\ker(\phi)_0)$ if necessary, where $(\ker(\phi)_0)$ is the identity component of $\ker(\phi)$,  we can arrange that $\phi$ has connected kernel.

Let $\chi=\pi\circ\phi$. We now determine the structure of $A$ and $B$. By Lemma~\ref{lem: making projection small},  we have $\mu_\TT(\phi(A'_2))+\mu_\TT(\phi(B'_2))<1/5$. Then by Proposition~\ref{prop: struc on AB}, there are compact intervals $I'_2, J'_2$ in $\TT$, such that
\[
\mu_\TT(I'_2)=\mu_L(A'_2)\quad\text{and}\quad\mu_\TT(J_2)=\mu_L(B'_2), 
\]
and 
\[
\max\{ \mu_{G'}(A'_2\tri\phi^{-1}(I'_2)), \mu_L(B'_2\tri\phi^{-1}(J'_2)) \} <K_0K_1K_2\delta\mu_G(A),
\]
where $K_0$ is the constant in Proposition~\ref{prop: struc on AB}.
Let $c_G$ be the constant in Lemma~\ref{lem:fromsmalltobig}. Choose $\delta$ such that $K_0K_1K_2\delta\mu_G(A)<c_G/700$. By Lemma~\ref{lem:fromsmalltobig}, there are compact intervals $I'_1,J'_1$ in $\TT$ with 
\[
\mu_\TT(I'_1)=\mu_L(A'_1)\quad\text{and}\quad\mu_\TT(J'_1)=\mu_L(B'_1), 
\]
and 
\[
 \max\{\mu_L(A'_1\tri\phi^{-1}(I'_1)), \mu_L(B'_1\tri\phi^{-1}(J'_1))\}<10K_0K_1K_2\delta\mu_G(A).   
\]
  By \eqref{eq: approx main compact}, and slighltly modifying $I'_1, J'_1$, we get compact intervals $I_1, J_1\subseteq \TT$ such that 
 \[
\mu_\TT(I_1)=\mu_G(A_1)\quad\text{and}\quad\mu_\TT(J_1)=\mu_G(B_1), 
\]
and
\[
 \max\{\mu_G(A_1\tri\chi^{-1}(I_1)), \mu_G(B_1\tri\chi^{-1}(J_1))\}<101K_0K_1K_2\delta\mu_G(A).   
\]
  By Lemma~\ref{lem:fromsmalltobig} again, there are intervals $I_0$ and $J_0$ in $\TT$, such that
\[
\mu_\TT(I_0)=\mu_G(A_0)\quad\text{and}\quad\mu_\TT(J_0)=\mu_G(B_0), 
\]
and 
\[
 \max\{\mu_G(A_0\tri\chi^{-1}(I_0)), \mu_G(B_0\tri\chi^{-1}(J_0))\} <1010K_0K_1K_2\delta\mu_G(A_0).   
\]

Next, by Lemma~\ref{lem: from sym dif to subset}, there are compact intervals $I,J\subseteq \TT$, with 
\begin{align*}
\mu_\TT(I)-\mu_G(A)&<10100K_0K_1K_2\delta\mu_G(A_0),\\
\mu_\TT(J)-\mu_G(B)&<10100K_0K_1K_2\delta\mu_G(A_0)\leq 10100K_0K_1K_2\delta\mu_G(B_0),
\end{align*}
and $A\subseteq \chi^{-1}(I)$, $B\subseteq\chi^{-1}(J)$. 
Note that all $K_0$, $K_1$, and $K_2$ only depend on $\lambda$, then one can take
\[
\delta\leq \min\Big\{\frac{1}{10100K_0K_1K_2}, \frac{\eta S}{K_1K_2},\frac{c_G}{700K_0K_1K_2}\Big\}\varepsilon,
\]
this finishes the proof. 

Finally, we consider the case when $\varepsilon=0$, that is, $\mu_G(AB)=\mu_G(A)+\mu_G(B)$. The proof follows the same argument, by replacing Proposition~\ref{prop: almost linear metric from local} with Proposition~\ref{prop: equal pseudometric linear} to construct the locally linear pseudometric, and by replacing Proposition~\ref{prop: almost linear metric from local} and Theorem~\ref{thm: homfrommeasurecompact} by Proposition~\ref{prop: strong linear}. 
\end{proof}

\section*{Acknowledgements}
  The authors would like to thank Lou van den Dries, Arturo Rodriguez Fanlo, Kyle Gannon, John Griesmer, Daniel Hoffmann, Ehud Hrushovski, Anand Pillay, Pierre Perruchaud, Daniel Studenmund, Jun Su, Jinhe Ye, and Ruixiang Zhang for valuable discussions. Part of the work was carried out while the first author was visiting the second author at the Department of Mathematics of University of Notre Dame, and he would like to thank the department for the hospitality.

\bibliographystyle{amsalpha}
\bibliography{ref}

\end{document}